\DeclareFontFamily{U}{tipa}{}
\DeclareFontShape{U}{tipa}{m}{n}{<->tipa10}{}
\newcommand{\arc@char}{{\usefont{U}{tipa}{m}{n}\symbol{62}}}%
\newcommand{\arc}[1]{\mathpalette\arc@arc{#1}}
\newcommand{\arc@arc}[2]{%
  \sbox0{$\m@th#1#2$}%
  \vbox{
    \hbox{\resizebox{\wd0}{\height}{\arc@char}}
    \nointerlineskip
    \box0
  }%
}
\newcommand{\nord}{\mbox{\scriptsize ${\circ\atop\circ}$}}
\theoremstyle{definition}
\newtheorem{theorem}{Theorem}[section]
\newtheorem{thm}[theorem]{Theorem}
\newtheorem{prop}[theorem]{Proposition}
\newtheorem{defn}[theorem]{Definition}
\newtheorem{lemma}[theorem]{Lemma}
\newtheorem{cor}[theorem]{Corollary}
\newtheorem{prop-def}{Proposition-Definition}[section]
\newtheorem{rema}[theorem]{Remark}
\newtheorem{nota}[theorem]{Notation}
\newcommand{\N}{{\mathbb N}}
\newcommand{\C}{{\mathbb C}}
\newcommand{\Z}{{\mathbb Z}}
\newcommand{\Hom}{\textrm{Hom}}
\newcommand{\one}{\mathbf{1}}
\renewcommand{\d}{\mathbf{d}}
\newcommand{\wt}{\text{wt}}
\newcommand{\Res}{\text{Res}}
\newcommand{\g}{{\mathfrak g}}
\newcommand{\h}{{\mathfrak h}}
\newcommand{\n}{{\mathfrak n}}
\begin{document}

\setlength{\oddsidemargin}{0cm} \setlength{\evensidemargin}{0cm}
\baselineskip=18pt

\title[First Cohomologies of Affine, Virasoro, and Lattice VOA]{First cohomologies of affine, Virasoro and lattice vertex operator algebras}
\author{Fei Qi}

\begin{abstract}
    In this paper we study the first cohomologies for the following three examples of vertex operator algebras: (i) the simple affine VOA associated with a simple Lie algebra with positive integral level; (ii) the Virasoro VOA corresponding to minimal models; and (iii) the lattice VOA assoicated with a positive definite even lattice. We prove that in all these cases, the first cohomology $H^1(V, W)$ consists of zero-mode derivations for every $\N$-graded $V$-module $W$ (where the grading is not necessarily given by the $L(0)$ operator). This agrees with the conjecture made by Yi-Zhi Huang and the author in 2018. The relationship between the first cohomology of the VOA and that of the associated Zhu's algebra is also discussed. 
    
    
 \end{abstract}

\maketitle

\section{Introduction}

In the representation theory of various algebras (including, but not limited to, commutative associative algebras, associative algebras, Lie algebras, etc.), one of the main tools is the cohomological method. The powerful method of homological algebra often provides a unified treatment of many results in representation theory, giving not only solutions to open problems, but also conceptual understandings to the results.

Vertex operator algebras (VOAs hereafter) arose naturally in both mathematics and physics (see \cite{BPZ}, \cite{B}, and \cite{FLM}) and are analogous to both Lie algebras and commutative associative algebras. In \cite{H-coh}, Yi-Zhi Huang introduced a cohomology theory for grading-restricted vertex algebras and modules that are analogous to Harrison cohomology for commutative associative algebras. Huang's construction uses linear maps from the tensor powers of the vertex algebra to suitable spaces of ``rational functions valued in the algebraic completion of a $V$-module'' satisfying some natural conditions, including a technical convergence condition. Geometrically, this cohomology theory is consistent with the geometric and operadic formulation of VOA. Algebraically, the first and second cohomologies are given by (grading-preserving) derivations and first-order deformations, similarly to those for commutative associative algebras. 

The vanishing theorem of an associative algebra states that the algebra is semisimple if and only if first Hochschild cohomologies with coefficients in all bimodules vanish, or equivalently, if and only if every derivation from the algebra to any bimodule is inner. Naturally, we expect similar results in VOAs. In \cite{Q-Coh}, the author generalized Huang's work and introduced a cohomology theory for meromorphic open-string vertex algebras (a non-commutative generalization to VOAs) and their bimodules, analogously to the Hochschild cohomology for (not-necessarily-commutative) associative algebras and the bimodules. The early version of \cite{HQ-Coh-reduct} proved a vertex algebraic version of the if part of the vanishing theorem: if $V$ is a meromorphic open-string vertex algebra such that the first cohomology $H^1(V, W)= 0$ for every $V$-bimodule $W$, then every left $V$-module satisfying a technical but natural convergence condition is completely reducible. 

For cohomological methods, the only if part of the vanishing theorem is much more useful in calculations. A common practice is to start from a long exact sequence, then use the vanishing theorem to fill zeros in various spots, and conclude various isomorphisms. While $H^1(V, W)=0$ holds trivially for every irreducible $V$-module $W$ with non-integral lowest weight, we found that if the lowest weight is an integer, $H^1(V, W)$ is generally not zero. Indeed for a VOA $V$ and a $V$-module $W$, it follows from the Jacobi identity that the zero-mode $v\mapsto w_0 v = \Res_x e^{xL(-1)}Y_{W}(v, -x)w$ of any weight-1 element $w$ defines a grading-preserving derivation $V\to W$ (provided that the map is not constantly zero), called zero-mode derivations. We denote the space of such derivations by $Z^1(V, W)$ (which stays zero if $W$ has no elements of integral weights). Conceptually, we believe that such derivations exist because the cohomologies in \cite{H-coh} and in \cite{Q-Coh} are analogous to Harrison and Hochschild for (commutative) associative algebras that ``missed'' the analogous Lie algebra structure on a VOA. The final version of \cite{HQ-Coh-reduct} proved the same conclusion under the revised condition that $H^1(V, W) = Z^1(V, W)$ for every $V$-bimodule $W$. 

A natural question then arises along the way: does $H^1(V, W) = Z^1(V, W)$ hold for every $V$-bimodule? We conjecture that this holds  when $V$ is strongly rational, i.e., if every weak $V$-module is a direct sum of its irreducible submodules. Since the associated Zhu's algebra is semisimple in this case, it seems promising to study the relation of $H^1(V, W)$ to the first cohomology of Zhu's algebras. We attempted and disappointedly found that though Zhu's algebra can simplify the computation and provide some interesting results regarding the first cohomology, it is usually insufficient to fully determine $H^1(V, W)$, partly because Zhu's algebra forgets some crucial structures. For example, Zhu's algebra associated with the lattice VOA is one-dimensional when the lattice is even, positive definite, and unimodular. In this case, Zhu's algebra tells nothing about $H^1(V, W)$.

Nevertheless, we still managed to prove $H^1(V, W)=Z^1(V, W)$ for some of the most important examples of VOAs $V$ and $V$-modules $W$. More precisely, we consider the cases where $V$ takes:
\begin{enumerate}
    \item the simple affine VOA $V=L_{\hat\g}(l, 0)$ assoicated with a simple Lie algebra $\g$ with positive integral level $l$;
    \item the Virasoro VOA with central charge 
    $$c = 1 - \frac{(p-q)^2}{6pq} \text{ for some integers }p, q \geq 2, gcd(p, q)=1.$$
    \item the lattice VOA $V_{L_0}$ assoicated with a positive definite even lattice $L_0$.
\end{enumerate}
For every $V$-module $W$ with $\N$-grading, we showed that $H^1(V, W) = Z^1(V, W)$ in all these three cases. In particular, the conclusion applies to modules for affine and lattice VOAs with $L(0)$-grading, since the $L(0)$-weights are all nonnegative. On the other hand, modules for the Virasoro VOA may have negative $L(0)$-weights. We managed to prove $H^1(V, W)=Z^1(V, W)$ when the lowest $L(0)$-weight of $W$ is greater or equal to $-3$. Though we believe the same should hold in general and proposed an algorithmic approach that can further lower the bound $-3$, it is unlikely that this approach leads to a general proof. More conceptual approaches are necessary and shall be considered in the future. 

We focus on the $\N$-grading and the $L(0)$-grading because they are the most natural choices. The $\N$-grading case is considered to be more important because the conclusions in \cite{HQ-Coh-reduct} only requires $H^1(V, W) = Z^1(V, W)$ to hold for every $V$-bimodule with the canonical $\N$-grading. We believe that the conjecture should hold for arbitrary grading, and every (not necessarily grading-preserving) derivation should be a ``zero-mode derivation'' associated with some element $w\in W$. But the method in the current paper is unlikely to achieve that goal.

We emphasize that all the $V$-modules $W$ considered in this paper are regarded as $V$-bimodules via the right action $w\otimes v \mapsto Y_{WV}^W(w, x)v = e^{xL(-1)}Y_W(v, -x)w$. We further emphasize that the conclusions in this paper are by-no-means sufficient to conclude the conjecture made in \cite{HQ-Coh-reduct}. The conjecture requires $H^1(V, W) = Z^1(V, W)$ for every $V$-bimodule $W$, the right-action on which is not necessarily related to the left-action. To fully prove the conjecture, it is necessary to classify all $V$-bimodules. This shall be discussed in future studies.

The paper is organized as follows. 

Section 2 reviews the basic definitions and discusses some general facts regarding derivations and zero-mode derivations, including the construction of a natural map from $H^1(V, W)$ to the first cohomology of the (higher level) Zhu's algebra assoicated with $V$.

Sections 3, 4 and 5 study $H^1(V, W)$ for the three types of VOAs $V$ and their irreducible modules $W$. We first show $H^1(V, W)=Z^1(V, W)$ for $L(0)$-graded $W$ with nonnegative weights, then for $W$ with the canonical $\N$-grading with zero lowest weight, finally for $W$ with arbitrary $\N$-grading. In addition, Section 3 also discusses the image of the map constructed in Section 2 (Proposition \ref{Affine-Zhu}); Section 4 gives a counter-example to $H^1(V,W) = Z^1(V, W)$ when $V$ is the Virasoro VOA not corresponding to minimal models (Proposition \ref{L(-1)w-der}); Section 4 also discusses the case when $L(0)$-weight of $W$ is negative (Theorem \ref{Vir-neg-energy-thm}). 

Section 6 provides a summary of the results and gives some remarks regarding future work.

\noindent\textbf{Acknowledgements.} The author would like to thank Yi-Zhi Huang for his long-term constant support, especially for the valuable suggestions and comments that greatly improved this paper. The author would also like to thank the Pacific Institute of Mathematical Science for offering the opportunity of teaching a network-wide graduate course on vertex algebras and Ethan Armitage, Ben Garbuz, Zach Goldthorpe, Nicholas Lai, and Mihai Marian for their participation. Many ideas for this paper arose during lectures and discussions.

\section{Basic definitions and some general facts}

In this paper, we assume that the reader is familiar with the basic notions and results in the theory of VOAs. In particular,
we assume that the reader is familiar with weak modules, $\N$-gradable weak
modules, generalized modules, and related results. Our terminology and
conventions follow those in \cite{FLM}, \cite{FHL}, and \cite{LL}. 

In this section, $(V, Y, \one, \omega)$ is VOA; $(W, Y_W)$ is a generalized $V$-module, i.e., $W = \coprod_{m\in \C} W_{[m]}$ is the direct sum of generalized eigenspaces 
$$W_{[m]} = \{w\in W: (L(0)- m)^n w = 0 \text{ for some }n\in \Z_+\}$$
of the operator $L(0)$. We also require that for every $m\in \C$, $W_{[m-n]}=0$ for every sufficiently large $n\in \Z_+$. In case $W$ is irreducible, $W = \coprod_{m\in \gamma + \N} W_{[m]} = \coprod_{n=0}^\infty W_{[\gamma+n]}$ for some $\gamma\in \C$. In other words, $W$ is $\N$-gradable by the operator $L(0)_s - \gamma$, where $L(0)_s$ is the semisimple part of the operator $L(0)$. This is one of the many cases when the grading of $W$ is better shifted by a constant away from the $L(0)$-grading. 

\noindent \textbf{Conventions of Notations and Terminologies  }In this paper, we will use the notation $\d_W$ for the grading operator of $W$. Certainly it is most natural to choose $\d_W = L(0)_s$. But we would like to allow $\d_W$ on each irreducible component of $W$ to be shifted by a constant $\alpha\in \C$, i.e., $d_W$ takes $L(0)_s + \alpha$ on an irreducible component. If $W$ is irreducible and has lowest weight $\gamma$, $\d_W = L(0)_s - \gamma$ gives an $\N$-grading, called the canonical $\N$-grading. From now on, $W_{[m]}$ will be used to denote the eigenspace of $\d_W$ instead of $L(0)_s$, i.e., for $m\in \C$, 
$$W_{[m]}= \{w\in W: \d_W w = m w\}. $$
We also say a $V$-module $W$ is $L(0)$-isomorphic (or $L(0)_s$-isomorphic) to $V$, if the $L(0)$-graded (or $L(0)_s$-graded) module $W$ is isomorphic to $V$. Note that we are not requiring $\d_W = L(0)$ or $\d_W=L(0)_s$. 


\subsection{First cohomology of VOAs} 

The cohomology theory of grading-restricted vertex algebras was introduced in \cite{H-coh} using certain maps from tensor products of $V$ to the space of $\overline{W}$-valued rational functions. The maps should satisfy $L(0)$- and $L(-1)$-conjugation properties and a natural but technical composable condition. Theorem 1.1 in \cite{H-1st-sec-coh} points out that the first cohomology $H^1(V, W)$ is isomorphic to the space of \textit{grading-preserving} derivations. Since this paper mainly concerns the first cohomology, we will use this result without going into the technical composable condition here. 

\begin{defn}
A linear map $F: V \to W$ is a grading-preserving derivation, if 
\begin{enumerate}
    \item $
    F\circ L(0) = \d_W \circ F$. 
    \item For every $u, v\in V$, 
    $$F(Y(u, x)v) = Y_W(u, x) F(v) + Y_{WV}^W(F(u), x)v. $$
\end{enumerate}
Here $Y_{WV}^W(w, x)v = e^{xL(-1)}Y_W(v, -x)w$. 
\end{defn}

\begin{nota}
$Y_{WV}^W$ is indeed an intertwining operator of type $\binom{W}{WV}$ satisfying the corresponding Jacobi identity. We use the notation $w_n$ for the coefficient of $x^{-n-1}$ in $Y_{WV}^W(w,x)$, as we normally do with intertwining operators. More precisely, $w_n: V \to W$ is a linear map sending $v\in V$ to $\Res_x x^n Y_{WV}^W(w, x)v$. For convenience, we also write
$$Y_{WV}^W(w, x) = \sum_{n\in \Z} w_n x^{-n-1} \in \Hom(V, W)[[x, x^{-1}]]. $$
By a straightforward computation, we see that
$$w_i v = (-1)^{i+1} v_i w + (-1)^{i+2} L(-1)v_{i+1}w + \cdots + \frac{(-1)^{i+1+j}}{j!}L(-1)^j v_{i+j}w + \cdots, $$
for $w\in W, v\in V, n\in \Z$. This formula will be frequently used in computing $w_i v$ without explicit reference. 
\end{nota}

\begin{rema}
It is shown in \cite{H-1st-sec-coh} that the first cohomology $H^1(V, W)$ and $\widehat{H}^1(V, W)$ constructed in \cite{H-coh} are both isomorphic to the space of grading-preserving derivations from $V$ to $W$. Note that \cite{H-coh} constructed the cohomology theory for grading-restricted vertex algebras and modules, where the conformal element is not required to exist. In particular, the $L(0)$-operator in \cite{H-coh} is not required to be a component of the vertex operator associated with the conformal element $\omega$. It can still be defined even without the existence of conformal element. The notation $L(0)$ is replaced by $\d_W$ in the later papers \cite{Q-Coh} and \cite{HQ-Coh-reduct}. 
\end{rema}

\begin{rema}
We should emphasize that $H^1(V, W)$ depends on the choice of the grading $\d_W$. Different choices of grading give different cohomologies. For example, if $W$ is an irreducible $V$-module whose lowest $L(0)$-weight $\gamma$ is not an integer, it is clear that $H^1(V, W) = 0$ if $\d_W = L(0)$, while $H^1(V, W)$ is generally nonzero with the canonical $\N$-grading given by $\d_W = L(0)-\gamma$. 
\end{rema}

\begin{rema}
It also follows trivially from the definition that 
$$H^1(V, W_1\oplus W_2) = H^1(V, W_1)\oplus H^1(V, W_2). $$
Thus if every $V$-module is completely reducible, it suffices to study $H^1(V, W)$ for irreducible $W$. 
\end{rema}

\begin{rema}
It follows from the definition of $F$ that $F\circ L(-1) = L(-1) \circ F$. The following argument is given by Huang: By setting $u=v=\one$ one sees immediately that  $F(\one)=0$. Thus
$$F(Y(v, x)\one)=Y_{WV}^W(F(v), x)\one.$$
Taking the derivative on both sides and use $L(-1)$-derivative property for $Y_{WV}^W$. 
$$\frac d {dx} F(Y(v, x)\one)=\frac d {dx} Y_{WV}^W(F(v), x)\one=Y_{WV}^W(L(-1)F(v), x)\one.$$
On the other hand, it follows from the $L(-1)$-derivative property of $Y$ that
$$\frac{d}{dx} F(Y(v, x)\one)=F(Y(L(-1)v, x)\one)=Y_{WV}^W(F(L(-1)v), x)\one.$$
Thus $Y_{WV}^W(F(L(-1)v), x)\one=Y_{WV}^W(L(-1)F(v), x)\one$. Taking the limit $x\to 0$, we obtain $F(L(-1)v)=L(-1)F(v)$.
\end{rema}

\begin{defn}
A derivation is called zero-mode derivation if there exists some homogeneous $w\in W$ of weight 1, such that 
$$F(v) = w_0 v$$
for every $v\in V$. Here 
$w_0$ is the coefficient of $x^{-1}$ in the series $Y_{WV}^W(w, x)v$. The space of zero-mode derivations is denoted by $Z^1(V, W)$. 
\end{defn}

\begin{rema}
It is proved in \cite{HQ-Coh-reduct} that if $V$ is a (meromorphic open-string) vertex algebra such that for every $V$-bimodule $(W, Y_W^L, Y_W^R)$, $\widehat{H}^1(V, W)=Z^1(V, W)$, then any left $V$-module satisfying a natural composability condition is a direct sum of irreducible submodules. It is also conjectured in \cite{HQ-Coh-reduct} that if the every weak $V$-module is a direct sum of irreducible submodules, then for every $V$-bimodule $W$, $\widehat{H}^1(V, W) = Z^1(V, W)$. Note that in this conjecture, the right $V$-module structure $Y_W^R$ is not necessarily given by $Y_{WV}^W$. 
\end{rema}

\subsection{Some general observations}

\begin{prop}
The dimension of the space of zero-mode derivations satisfies
$$\dim Z^1(V, W) \leq \dim W_{[1]}/ L(-1)W_{[0]}.$$ 
Equality holds when $W$ is irreducible, not $L(0)_s$-isomorphic to $V$ and $\d_W = L(0)_s + \alpha$ with $\alpha\neq 0$. \end{prop}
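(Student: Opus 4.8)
The plan is to realize $Z^1(V,W)$ as the image of the linear map $\Phi\colon W_{[1]}\to \Hom(V,W)$, $w\mapsto (v\mapsto w_0v)$, where throughout the proof $W_{[1]}$ (resp.\ $W_{[0]}$) denotes the $\d_W$-eigenspace of eigenvalue $1$ (resp.\ $0$), and then to compute $\ker\Phi$. First I would check that for $w\in W_{[1]}$ the map $w_0$ really is a derivation. The grading condition $w_0\circ L(0)=\d_W\circ w_0$ follows from a weight count: $w_0v$ has $L(0)_s$-eigenvalue $(1-\alpha)+\text{wt}(v)-1$, whence $\d_W(w_0v)=\text{wt}(v)\,w_0v=w_0(L(0)v)$; and the Leibniz rule (condition (2)) is the standard consequence of the Jacobi identity relating $Y$, $Y_W$ and $Y_{WV}^W$ for the zero mode of a $\d_W$-weight-one element. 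Hence $Z^1(V,W)=\Phi(W_{[1]})$, and since the graded pieces are finite-dimensional, rank--nullity gives $\dim Z^1(V,W)=\dim W_{[1]}-\dim\ker\Phi$.

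For the inequality (valid for arbitrary $\d_W$) I would show $L(-1)W_{[0]}\subseteq\ker\Phi$. Since $L(-1)$ raises the $\d_W$-grading by one, $L(-1)W_{[0]}\subseteq W_{[1]}$; and the $L(-1)$-derivative property of $Y_{WV}^W$ gives the mode identity $(L(-1)w')_n=-n\,(w')_{n-1}$, so in particular $(L(-1)w')_0=0$ for every $w'$. Thus $L(-1)W_{[0]}\subseteq\ker\Phi$, and $\dim Z^1(V,W)\le \dim W_{[1]}-\dim L(-1)W_{[0]}=\dim\big(W_{[1]}/L(-1)W_{[0]}\big)$, the asserted bound.

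For the equality I would prove the reverse inclusion $\ker\Phi\subseteq L(-1)W_{[0]}$ under the stated hypotheses. The key preliminary is that $L(-1)$ is injective on $W$: if $L(-1)v=0$ for a nonzero homogeneous $v$, then $\mathfrak{sl}_2$-representation theory (weights bounded below) forces $\text{wt}(v)=0$, and then the commutator $[L(-1),u_n]=-n\,u_{n-1}$ together with $L(-1)v=0$ shows $u_nv=0$ for all $n\ge 0$, i.e.\ $v$ is vacuum-like; by Li's correspondence a nonzero vacuum-like vector in an irreducible $W$ forces $W\cong V$, which is excluded. Granting injectivity, take $w\in\ker\Phi$, so in particular $w_0\omega=0$. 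Writing $k=1-\alpha$ for the $L(0)_s$-eigenvalue of $w$ and expanding via the stated mode formula with $\omega_j=L(j-1)$,
$$w_0\omega=-L(-1)w+L(-1)L(0)w-\tfrac12 L(-1)^2L(1)w+\cdots=L(-1)\Big[(k-1)w+\sum_{j\ge 2}\tfrac{(-1)^{j+1}}{j!}L(-1)^{j-1}L(j-1)w\Big],$$
a finite sum since $L(j-1)w$ vanishes once its weight drops below the lowest weight. Cancelling the injective $L(-1)$ shows $(k-1)w$ lies in $L(-1)W$; as $\alpha\neq 0$ gives $k-1=-\alpha\neq 0$, we get $w\in L(-1)W$, and since $w\in W_{[1]}$ and $L(-1)$ is injective the preimage lies in $W_{[0]}$, so $w\in L(-1)W_{[0]}$. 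Hence $\ker\Phi=L(-1)W_{[0]}$ and equality follows.

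The computational heart—the identity exhibiting $w_0\omega$ as $L(-1)$ applied to $(k-1)w$ plus a manifest element of $L(-1)W$—is short, so I expect the main obstacle to be elsewhere: the injectivity of $L(-1)$, which is exactly where the hypotheses $W$ irreducible and $W\not\cong V$ enter and which relies on the vacuum-like-vector classification, together with the bookkeeping separating the $\d_W$-grading from the $L(0)$-grading. A secondary point to handle is the case in which $L(0)$ does not act semisimply: one then notes that the nilpotent part $N=L(0)-L(0)_s$ commutes with $L(-1)$, so that $\alpha-N$ is invertible and preserves $\text{im}\,L(-1)$, and the same conclusion $w\in L(-1)W_{[0]}$ goes through.
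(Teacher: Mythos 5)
Your proposal is correct and takes essentially the same route as the paper: the inequality follows from $(L(-1)u)_0=0$ via the $L(-1)$-derivative property of $Y_{WV}^W$, and the equality follows by expanding $w_0\omega=0$ through the skew-symmetry mode formula, factoring out $L(-1)$, and using $\ker L(-1)=0$ (Li's vacuum-like vector theorem for irreducible $W\not\cong V$) together with $\alpha\neq 0$. The additional material you supply---the rank--nullity framing via $\Phi$, the check that $w_0$ is a grading-preserving derivation, and the sketch of Li's result---is detail the paper delegates to definitions and citations; the only blemish is your parenthetical claim that $\mathfrak{sl}_2$-theory alone forces $v\in\ker L(-1)$ to have weight $0$, which is not quite right but also never used, since your downward induction on the modes $u_n v$ only needs the weights of $W$ to be bounded below.
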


\begin{proof}
The first conclusion follows from the $L(-1)$-derivative property of $Y_{WV}^W$: if $w=L(-1)u$ for some $u\in W_{[0]}$, then 
$$w_0 v = \Res_{x} Y_{WV}^W (L(-1)u, x)v = \Res_x \frac d {dx} Y_{WV}^W(u, x)v,$$
which is zero since the coefficient of $x^{-1}$ vanishes in any derivative of any formal series.

To show the equality holds under the additional assumption, we start from the assumption $w_0v = 0$ for every $v\in V$. Then in particular, 
$$0 = w_0\omega = -\omega_0 w + L(-1)\omega_1 w + L(-1)^2 u = L(-1)(-\alpha w + L(-1)u)$$
for some $u\in W_{[0]}$. Since $W$ is not $L(0)$-isomorphic to $V$, $\ker L(-1) = 0$ (see \cite{Li-vacuum-like}). Since $\alpha \neq 0$, we have $w=L(-1)u/\alpha \in L(-1)W_{[0]}$. 
\end{proof}

\begin{rema}
See Remark \ref{strictly-less-1} for a counter-example to the equality when $W$ is $L(0)$-isomorphic to $V$. See Remark \ref{strictly-less-2} for a counter-example to the equality when $\alpha=0$. 
\end{rema}

\begin{prop}\label{Der-gen}
Let $V$ be a VOA generated by a subset $S$. Let $W$ be a $V$-module and $F: V\to W$ be a derivation. If $F$ sends every element in $S$ to zero, then $F$ is a zero map.
\end{prop}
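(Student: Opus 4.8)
The plan is to work with the derivation property in its component form. Writing $Y(u,x)v = \sum_{n\in\Z}(u_nv)x^{-n-1}$, and likewise expanding $Y_W$ and $Y_{WV}^W$, condition (2) in the definition of a derivation, read off coefficient-by-coefficient, becomes
$$F(u_n v) = u_n F(v) + (F(u))_n v$$
for all $u,v\in V$ and all $n\in\Z$. This single identity is the only structural input I will need; in particular, the grading condition (1) plays no role. I will also use that $F(\one)=0$, which was observed above by setting $u=v=\one$.

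Next I would recall that, since $V$ is generated by $S$, its underlying vector space is spanned by $\one$ together with all elements of the form $u^{(1)}_{n_1}u^{(2)}_{n_2}\cdots u^{(k)}_{n_k}\one$ with each $u^{(i)}\in S$ and each $n_i\in\Z$. By linearity of $F$ it therefore suffices to prove that $F$ annihilates every such element, and I would do this by induction on the length $k$. The base case $k=0$ is exactly $F(\one)=0$, already noted.

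For the inductive step, write such an element as $u_n v'$ with $u=u^{(1)}\in S$, $n=n_1$, and $v'=u^{(2)}_{n_2}\cdots u^{(k)}_{n_k}\one$ a product of length $k-1$. The component identity gives
$$F(u_n v') = u_n F(v') + (F(u))_n v'.$$
By the induction hypothesis $F(v')=0$, so the first term vanishes; and since $u\in S$ we have $F(u)=0$, whence $(F(u))_n v' = 0_n v' = 0$ and the second term vanishes as well. Thus $F(u_nv')=0$, which completes the induction and the proof. (Equivalently, one can phrase this as: $\ker F$ contains $\one$ and is closed under $s_n$ for every $s\in S$ and $n\in\Z$, hence contains the vertex subalgebra generated by $S$, which is all of $V$.)

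I do not expect a serious obstacle here: the entire content lies in the observation that the derivation property \emph{propagates} the vanishing of $F$ from the generators through arbitrary iterated products, because each use of the recursion produces two terms, one killed by the inductive hypothesis (vanishing on the shorter product $v'$) and the other killed by the hypothesis that $F$ vanishes on $S$ (via the right-action coefficient $(F(u))_n$). The only points deserving a word of care are the standard fact that a VOA generated by $S$ is spanned by the iterated products written above, and the routine bookkeeping in passing from the formal series identity to its individual coefficients.
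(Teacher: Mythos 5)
Your proof is correct and is essentially the same as the paper's: both proceed by induction on the length of iterated products $u^{(1)}_{n_1}\cdots u^{(k)}_{n_k}\one$ spanning $V$, using the coefficient form $F(u_n v) = u_n F(v) + (F(u))_n v$ of the derivation property (which the paper extracts by applying $\Res_x x^{n}$ to the formal-series identity) so that one term dies by the inductive hypothesis and the other by $F(u)=0$ for $u\in S$. Your explicit handling of the base case via $F(\one)=0$ is a minor point the paper leaves implicit, but the argument is the same.
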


\begin{proof}
This follows directly from an induction, using the facts that $V$ is spanned by 
$$a^{(1)}_{n_1}\cdots a^{(m)}_{n_m} \one, a^{(1)}, ..., a^{(m)}\in S,  n_1, ..., n_m\in \Z, $$
and 
\begin{align*}
F(a^{(i)}_{n_i} v) &= \Res_x x^{n_i} F(Y_V(a^{(i)}, x)v)\\
&= \Res_x x^{n_i} \left(Y_W(a^{(i)}, x)F(v) + Y_{WV}^W(F(a^{(i)}), x)v \right)\\
&= a^{(i)}_{n_i}F(v) + F(a^{(i)})_{n_i} v,
\end{align*}
for any $a^{(i)}\in S$, $v\in V$. 
\end{proof}

\begin{rema}\label{Rmk-2-12}
The proposition shows that there exists \textit{at most} one derivation sending the generators of $V$ to some designated elements in $W$. Thus to classify the derivations, it suffices to focus on the images on the generators. Whether or not these images are compatible is an interesting question. In Proposition \ref{L(-1)w-der} we will see a case study of the compatibility problem. 
\end{rema}



\subsection{Zhu's algebra and its bimodules}

Zhu defined an associative algebra $A(V)$ associated with a VOA $V$ in \cite{Zhu-Modular}, called the Zhu's algebra. Dong, Li, and Mason gave a generalization $A_N(V)$ in \cite{DLM-higher-Zhu}, called the level-$N$ Zhu's algebra, or higher level Zhu's algebra. We recall the definition in \cite{DLM-higher-Zhu} here. 

\begin{defn}
For any fixed natural number $N\in \N$,  $A_N(V) = V/O_N(V)$ where $O_N(V)$ is spanned by elements of the form 
\begin{align}
    \Res_x x^{-2N-2-n}Y((1+x)^{L(0)+N}u, x)v \label{ucircv}
\end{align}
with $u, v\in V, n\in \N$, and elements 
$$L(0) v + L(-1)v $$
with $v\in V$. For $u, v\in A_N(V)$, a binary operation $u*v$ is defined by
$$u *_N v = \sum_{m=0}^N (-1)^m\binom{m+N}N\Res_x x^{-N-m-1}Y((1+x)^{L(0)+N}u, x)v.$$
When $N=0$, $A_0(V)$ coincides with $A(V)$ defined in \cite{Zhu-Modular}.
\end{defn}

\begin{thm}[\cite{Zhu-Modular}, \cite{DLM-higher-Zhu}]
$(A_N(V), *_N)$ forms an associative algebra. \end{thm}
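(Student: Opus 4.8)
The plan is to reduce the statement to three verifications: that $O_N(V)$ is a two-sided ideal of $V$ under $*_N$, so that $*_N$ descends to a well-defined product on the quotient $A_N(V) = V/O_N(V)$; that the descended product is associative; and that the class of $\one$ is a two-sided identity. The only tools I would use are residue manipulations, the Jacobi identity for $Y$, the conjugation formula governing $Y\big((1+x)^{L(0)}u, x\big)$, and the $L(-1)$-derivative property $\frac{d}{dx}Y(v, x) = Y(L(-1)v, x)$; the last of these is precisely what handles the generators of the form $L(0)v + L(-1)v$.

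First I would establish the elementary residue reductions that keep expressions inside $O_N(V)$ and allow one to pass between its generators, such as
$$\Res_x x^{-2N-2-m} Y\big((1+x)^{L(0)+N+n}u, x\big)v \in O_N(V), \qquad m \geq n \geq 0,$$
together with the stability of $O_N(V)$ under the $L(-1)$-derivative, both of which follow from expanding the scalar factor $(1+x)^n$ out of $(1+x)^{L(0)+N+n}$ and from the relation $L(0)v + L(-1)v \in O_N(V)$. These reductions are what let surplus terms, namely those carrying a pole of order at least $2N+2$, be recognized as elements of $O_N(V)$.

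With these reductions available, the ideal property $O_N(V) *_N V \subseteq O_N(V)$ and $V *_N O_N(V) \subseteq O_N(V)$ follows by substituting a defining generator of $O_N(V)$ into one argument of $*_N$, pushing the factor $(1+x)^{L(0)+N}$ through with the conjugation formula, and reorganizing the resulting residue into generators. The heart of the matter is associativity. Expanding the definition, $u *_N (v *_N w)$ is an iterated residue of a \emph{composition} $Y(u, x_1)Y(v, x_2)w$ weighted by explicit powers of $x_1, x_2, (1+x_1), (1+x_2)$, whereas $(u *_N v) *_N w$ is a residue of an \emph{iterate} $Y(Y(u, x_0)v, x_2)w$. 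The associativity (iterate) form of the Jacobi identity relates these two, and the discrepancy is a sum of residues $\Res_{x_2} x_2^{-2N-2-j} Y\big((1+x_2)^{L(0)+N}(u_k v), x_2\big)w$ that lie in $O_N(V)$ by the reductions above.

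The main obstacle I anticipate is the combinatorial bookkeeping inside the associativity computation. Matching the composition against the iterate forces one to expand $(1+x_1)^{L(0)+N}$ and the Jacobi $\delta$-functions in $x_0 = x_1 - x_2$, and the binomial coefficients $\binom{m+N}{N}$ built into $*_N$ must telescope against these expansions so that every surplus term acquires a pole of order at least $2N+2$ and is thereby absorbed into $O_N(V)$. For $N = 0$ this collapses to Zhu's original residue identity, in which no binomials appear; the general level is exactly the place where one needs the precise combinatorial identity, and verifying it — rather than any conceptual step — is the delicate part. Finally I would check the unit: a direct computation using $Y(\one, x) = \mathrm{id}$ gives $\one *_N v = v$ on the nose, while $v *_N \one \equiv v \pmod{O_N(V)}$ follows after using $Y(w, x)\one = e^{xL(-1)}w$ and the relation $L(0)v + L(-1)v \in O_N(V)$ to discard the higher-order terms.
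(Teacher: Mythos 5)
The paper offers no proof of this theorem: it is recalled from \cite{Zhu-Modular} and \cite{DLM-higher-Zhu}, so the only proofs to compare against are those in the cited references. Your outline is essentially that standard argument: the same three-part decomposition (two-sided ideal property of $O_N(V)$ so that $*_N$ descends, associativity, unit), the same key reduction lemma $\Res_x x^{-2N-2-m}Y((1+x)^{L(0)+N+n}u, x)v\in O_N(V)$ for $m\geq n\geq 0$ (the algebra analogue of the bimodule statement the paper records as Lemma \ref{HY-Lemma}), and the same use of the Jacobi identity to compare the products $Y(u,x_1)Y(v,x_2)w$ appearing in $u*_N(v*_N w)$ with the iterates $Y(Y(u,x_0)v,x_2)w$ appearing in $(u*_N v)*_N w$, the discrepancy being absorbed into $O_N(V)$ by the reduction lemma. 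Your unit check is also correct: $\one *_N v=v$ holds identically because only the $m=0$ summand has a nonvanishing residue, while $v*_N\one\equiv v$ genuinely requires the generators $L(0)v+L(-1)v$.

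Two caveats. First, for $N\geq 1$ the elements $L(0)v+L(-1)v$ are independent generators of $O_N(V)$ --- unlike Zhu's case $N=0$, where $(L(0)+L(-1))v=\Res_x x^{-2}Y((1+x)^{L(0)}v,x)\one$ is itself a residue generator --- so the well-definedness step must separately verify that $\bigl((L(0)+L(-1))u\bigr)*_N v$ and $v*_N\bigl((L(0)+L(-1))u\bigr)$ lie in $O_N(V)$; your appeal to the $L(-1)$-derivative property is the right tool for this, but it is an additional computation, not a by-product of the residue reductions. Second, you explicitly defer the binomial telescoping that makes the Jacobi-identity comparison close up at level $N\geq 1$; that combinatorial identity is the real content of \cite{DLM-higher-Zhu}, so as written this is a correct plan matching the cited proof rather than a complete proof.
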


Given a $V$-module $W$, Frenkel and Zhu defined a $A(V)$-bimodule $A(W)$ in \cite{Frenkel-Zhu}. Huang and Yang generalized the construction and defined an $A_N(V)$-bimodule $A_N(W)$ in \cite{HY-lio-aa}. We recall the definition in \cite{HY-lio-aa} here. 

\begin{defn}
Let $A_N(W) = W/O_N(W)$ where $O_N(W)$ is spanned by elements of the form 
$$\Res_x x^{-2N-2} Y_W((1+x)^{L(0)+N}v, x)w$$
where $v\in V, w\in W$, and elements of the form $$\d_W w+L(-1)w$$
where $w\in W$. For $v\in V, w\in W$, define
\begin{align*}
    v *_N w &= \sum_{m=0}^N \binom{m+N}N \Res_x x^{-1-m-N} Y_W((1+x)^{L(0)+N}v, x)w,\\
    w *_N v &= \sum_{m=0}^N \binom{m+N}N \Res_x x^{-1-m-N} Y_{WV}^W((1+x)^{\d_W+N}w, x)v.
\end{align*}
Here $Y_{WV}^W$ is the intertwining operator defined as follows
\begin{align*}
    & Y_{WV}^W:  W \otimes V\to W[[x,x^{-1}]]\\
    & Y_{WV}^W(w,x)v = e^{xL(-1)}Y_W(v,-x)w = \sum_{n\in \Z} w_n v x^{-n-1}. 
\end{align*}
\end{defn}

\begin{thm}[\cite{HY-lio-aa}]
$(A_N(W), *)$ forms an $A_N(V)$-bimodule. 
\end{thm}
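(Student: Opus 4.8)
The plan is to adapt, level by level, the strategy that Zhu \cite{Zhu-Modular} and Frenkel--Zhu \cite{Frenkel-Zhu} used for the level-zero bimodule $A(W)$, and that Dong--Li--Mason \cite{DLM-higher-Zhu} used for the higher-level algebra $A_N(V)$. The entire statement reduces to a family of residue identities that follow from the Jacobi identity for the module map $Y_W$ and for the intertwining operator $Y_{WV}^W$. I would organize the argument into two parts: (i) well-definedness, showing that both products descend to the quotient $A_N(W) = W/O_N(W)$; and (ii) verification of the left-module, right-module, and compatibility axioms on representatives modulo $O_N(W)$.

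First I would record the basic residue lemmas that control $O_N(W)$. By definition, $\Res_x x^{-2N-2} Y_W((1+x)^{\d_W + N} v, x) w \in O_N(W)$ for homogeneous $v \in V, w \in W$, and $\d_W w + L(-1) w \in O_N(W)$. From these, together with the $L(-1)$-derivative property of $Y_W$, I would derive a companion set of identities expressing $\Res_x x^{-2N-2-n} Y_W((1+x)^{\d_W + N} v, x) w$ for all $n \in \N$ as elements of $O_N(W)$. These are the exact module-theoretic analogues of Zhu's lemmas and form the engine of every subsequent computation. Using them I would verify the four containments $O_N(V) *_N W \subseteq O_N(W)$, $V *_N O_N(W) \subseteq O_N(W)$, $O_N(W) *_N V \subseteq O_N(W)$, and $W *_N O_N(V) \subseteq O_N(W)$. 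Each is a direct residue computation: one substitutes the defining generator into the relevant $*_N$ formula, applies the associativity (iterate) property to combine the two vertex operators into the single form $Y_W(Y(\cdot, x_0)\cdot, x_2)$, and reads off that the result is a combination of the generators of $O_N(W)$.

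The module and compatibility axioms are the core. For the left-module axiom $(u *_N v) *_N w \equiv u *_N (v *_N w) \pmod{O_N(W)}$, I would expand both sides into double residues, use associativity to rewrite $Y_W(u, x_1) Y_W(v, x_2) w$, and then collapse the double sum over the two binomial prefactors $\binom{m+N}{N}$ into the single defining sum, discarding the remainder via the lemmas above; that the class of $\one$ acts as the identity follows similarly and more simply. The right-module axiom $(w *_N u) *_N v \equiv w *_N (u *_N v)$ is handled by the same scheme, but using the Jacobi identity for $Y_{WV}^W$ (which holds, as recorded in the Notation above) in place of that for $Y_W$. Finally the compatibility $(u *_N w) *_N v \equiv u *_N (w *_N v)$ follows from the commutator formula relating $Y_W(u, x_1)$ and $Y_{WV}^W(w, x_2)$, itself a consequence of $Y_{WV}^W$ being an intertwining operator intertwining the two $V$-actions.

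I expect the main obstacle to be the combinatorial bookkeeping in the module axioms: reducing the composed double sum, weighted by products of the coefficients $\binom{m+N}{N}$ and the twists $(1+x)^{\d_W + N}$, back to a single sum of the same shape modulo $O_N(W)$. For $N = 0$ this collapse is Zhu's classical computation; for general $N$ the extra summation index and the shifted exponent $2N+2$ in the definition of $O_N(W)$ make the identities considerably more delicate, and getting the accounting of negative powers of $x$ exactly right—so that every discarded term genuinely lands in $O_N(W)$—is where the real work lies.
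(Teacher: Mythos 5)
The paper offers no proof of this theorem: it is imported verbatim from Huang--Yang \cite{HY-lio-aa}, and the only technical content of it that the paper actually uses is recorded separately as Lemma \ref{HY-Lemma}. Your outline --- the residue lemmas controlling $O_N(W)$ (including the generator $\d_W w + L(-1)w$, which is exactly what distinguishes this construction from Frenkel--Zhu's $A(W)$), the four containments giving well-definedness, and the verification of the left, right, and compatibility axioms via the Jacobi identities for $Y_W$ and for the intertwining operator $Y_{WV}^W$ --- is precisely the strategy of the cited proof, so your approach is the same as the source's, with the understanding that the residue bookkeeping you defer is the bulk of the actual argument there.
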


\begin{rema}
\begin{enumerate}
    \item It should be emphasized that in case $N=0$, $A_0(V) = A(V)$ but $A_0(W)$ is \textit{different} to the bimodule $A(W)$ constructed in \cite{Frenkel-Zhu}, where $(\d_W+L(-1))w$ is not included in the relation of $O(W)$. In order to emphasize the difference, we will stick to the notation $A_0(V)$ and $A_0(W)$ in this paper, in spite that $A_0(V) = A(V)$. 
    \item It should also be noted that if $N>0$, the elements $L(0)v + L(-1)v$ are generally not included in the subspace spanned by elements of form (\ref{ucircv}). To guarantee that $A_N(V)$ acts by zero-modes, it is necessary to further require $L(0)v +L(-1)v\in O_N(V)$. Please see \cite{AB} for further discussions. 
    \item It should be noted that recently, the authors of \cite{HY-lio-aa} published a corrigendum \cite{HY-lio-aa-correct} and posted a new version of \cite{HY-lio-aa} on arXiv. In the new version, the requirement that $L_W(0)w + L_W(-1)w\in O_N(W)$ was removed. The right action is also modified so that $L_W(0)+L_W(-1)$ does not necessarily act trivially on $A_N(W)$. The resulted bimodule $A_N(W)$ in \cite{HY-lio-aa} will be larger than what we use in the current paper. Their revision does not impact the validity of any results here. 
\end{enumerate}

\end{rema}

The following results from \cite{HY-lio-aa} will be useful in our argument:

\begin{lemma}\label{HY-Lemma}
\begin{enumerate}
    \item For every $v\in V$
$v*_N O_N(W)\subseteq O_N(W), O_N(w)*_N v \subseteq O_N(W)$, 
    \item \label{HY-Lemma-1}(Lemma 4.4, \cite{HY-lio-aa}, Part 2) For every $v\in V, w\in W$ and integers $p\geq q \geq 0$, 
    $$\Res_x x^{-2N-2-p}Y_W((1+x)^{L(0)+N+q}u, x)w\in O_N(W), $$
    $$\Res_x x^{-2N-2-p}Y_{WV}^W((1+x)^{\d_W+N+q}u, x)w\in O_N(W).$$
    In particular, for every $n\in \N$, 
    $$\Res_x x^{-2N-2-n}Y_{WV}^W((1+x)^{\d_W+N}w, x)v\in O_N(W).$$
    \item (Lemma 4.4, \cite{HY-lio-aa}, Part 3) \label{HY-Lemma-2} For every $v\in V, w\in W$, $$v*_Nw - w*_N v = -\Res_x Y_{WV}^W((1+x)^{\d_W-1}w, x)v,$$
    $$v*_Nw - w*_N v = -\Res_x Y_W((1+x)^{L(0)-1}v, x)w$$
in $A_N(W)$. 

\end{enumerate}
\end{lemma}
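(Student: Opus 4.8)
Parts (1)--(3) are the higher-level, module-theoretic analogues of the residue identities underlying Zhu's algebra and the Dong--Li--Mason construction, so my plan is to treat Part (2) as the technical core, deduce Part (1) from it through the Jacobi identity, and obtain Part (3) by a direct skew-symmetry computation.

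For Part (2) I would abbreviate
$$R_{p,q}(v,w) = \Res_x x^{-2N-2-p}Y_W((1+x)^{L(0)+N+q}v,x)w,$$
so that $R_{0,0}(v,w)\in O_N(W)$ is exactly the defining relation. The clean half of the argument is raising $q$: since $(1+x)^{L(0)+N+q+1}=(1+x)\cdot(1+x)^{L(0)+N+q}$ as operators on $V$, splitting off the factor $(1+x)$ and reading off the two resulting residues gives
$$R_{p,q+1}(v,w)=R_{p,q}(v,w)+R_{p-1,q}(v,w).$$
Hence, once the bottom row $\{R_{p,0}(v,w):p\geq 0\}\subseteq O_N(W)$ is known, an induction on $q$ delivers the whole range $p\geq q$, because for $p\geq q+1$ both $R_{p,q}$ (as $p>q$) and $R_{p-1,q}$ (as $p-1\geq q$) already lie in $O_N(W)$. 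The bottom row itself I would establish by induction on $p$, combining the module relation $\d_W w'+L(-1)w'\in O_N(W)$ (applied to residue elements $w'\in W$), the $L(-1)$-derivative property $\frac{d}{dx}Y_W(a,x)=Y_W(L(-1)a,x)$, the conjugation identity $(1+x)^{L(0)+N}L(-1)v=(1+x)L(-1)(1+x)^{L(0)+N}v$, and the $V$-side relation $L(0)v+L(-1)v\in O_N(V)$, which is what lets the emerging combination $(L(0)+N+L(-1))v$ be reabsorbed. The statement for $Y_{WV}^W$ has the identical form because $Y_{WV}^W$ is an intertwining operator carrying the same $L(-1)$-derivative property while $O_N(W)$ carries the companion relation $\d_W w+L(-1)w$; the ``in particular'' clause is just the $q=0$, $p=n$ instance.

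For Part (1) it suffices to test $v*_N(-)$ and $(-)*_N v$ on the two families of generators of $O_N(W)$. On a generator $\Res_x x^{-2N-2}Y_W((1+x)^{L(0)+N}u,x)w$ I would substitute into the defining sum for $v*_N$, rewrite the product $Y_W((1+y)^{L(0)+N}v,y)Y_W((1+x)^{L(0)+N}u,x)w$ by the iterate (associativity) form of the Jacobi identity into operators of the shape $Y_W(Y(\cdots)u,x)w$, and observe that every cross term that arises is a shifted-power residue of exactly the type certified by Part (2); on the generator $(\d_W+L(-1))w$ I would use the $\d_W$-commutation and the $L(-1)$-derivative to reduce to a total $x$-derivative plus another $(\d_W+L(-1))$-element. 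The right action is handled symmetrically through $Y_{WV}^W$. For Part (3) I would start from the defining sum for $v*_N w$, apply skew-symmetry $Y_W(v,x)w=e^{xL(-1)}Y_{WV}^W(w,-x)v$ to convert every $Y_W$ into $Y_{WV}^W$, expand the exponential, and trade each $L(-1)$ for $-\d_W$ modulo $O_N(W)$ so that it collapses against the $(1+x)^{\d_W+N}$ weighting in the definition of $w*_N v$; the ``in particular'' clause of Part (2) discards the higher residues, and the single surviving term is precisely $-\Res_x Y_{WV}^W((1+x)^{\d_W-1}w,x)v$. The second displayed formula then follows either by running the same computation from $w*_N v$ or by one more application of skew-symmetry.

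The main obstacle is the bottom row of Part (2): this is the one place where the two defining relations of $O_N(W)$ (and, through the reabsorption step, the relation defining $O_N(V)$) must be played against each other, and the bookkeeping of the $(1+x)^{-1}$-expansions together with the distinction between $L(0)$ and its semisimple part $\d_W$ has to be controlled carefully, since the naive total-derivative identity alone produces positive-shift residues that do not by themselves lie in $O_N(W)$. Once that lemma is in place, Parts (1) and (3) are largely organizational, reducing to standard vertex-operator identities with Part (2) supplying exactly the residues needed to land back in $O_N(W)$.
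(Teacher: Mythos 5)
First, a point of reference: the paper itself offers no proof of this lemma --- it is imported verbatim from Huang--Yang \cite{HY-lio-aa} (Lemma 4.4) --- so your attempt can only be compared with the standard argument there, whose overall shape you do reproduce. Your recursion $R_{p,q+1}=R_{p,q}+R_{p-1,q}$ is correct, and the bottom row does follow by induction on $p$: for homogeneous $v$ of weight $h$, the $L(-1)$-derivative property and integration by parts give
$$R_{p,0}\bigl((L(0)+L(-1))v,w\bigr)=(2N+2+p)\,R_{p+1,0}(v,w)+(N+1+p)\,R_{p,0}(v,w),$$
which closes the induction since $2N+2+p\neq 0$. But note what justifies the left-hand side lying in $O_N(W)$: it is the inductive hypothesis, quantified over \emph{all} of $V$, applied to the element $(L(0)+L(-1))v$. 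Your stated justification --- that $L(0)v+L(-1)v\in O_N(V)$ --- is circular at this stage: nothing available yet converts membership in $O_N(V)$ into membership of residues in $O_N(W)$; that conversion is essentially the bimodule well-definedness statement $O_N(V)*_N W\subseteq O_N(W)$, which is not part of this lemma and certainly cannot be used before it.

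The more serious gap is your claim that the $Y_{WV}^W$ half of Part (2) follows ``identically.'' It cannot: the defining relations of $O_N(W)$ involve only $Y_W$ (and $(\d_W+L(-1))w$), so a parallel induction for $Y_{WV}^W$ has no base case to start from. The actual mechanism is a reduction of the second display to the first: write $Y_{WV}^W(w,x)v=e^{xL(-1)}Y_W(v,-x)w$, use the congruence $e^{xL(-1)}(1+x)^{\d_W}=(1+x)^{\d_W+L(-1)}\equiv 1 \bmod O_N(W)$ (this is exactly where the relation $(\d_W+L(-1))w$ enters, coefficient by coefficient), and substitute $y=-x/(1+x)$; one finds
$$\Res_x x^{-2N-2-p}Y_{WV}^W((1+x)^{\d_W+N+q}w,x)v\equiv(-1)^{p+1}\Res_y y^{-2N-2-p}Y_W((1+y)^{L(0)+N+p-q}v,y)w \bmod O_N(W),$$
so the index pair $(p,q)$ is sent to $(p,p-q)$. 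The hypothesis $p\geq q\geq 0$ is precisely what guarantees $p\geq p-q\geq 0$, i.e.\ that the right-hand side is covered by the first display; the fact that your sketch never invokes $p\geq q$ on the $Y_{WV}^W$ side is the symptom of the missing step. You do have this skew-symmetry machinery in your outline, but you deploy it only in Part (3); it is already needed inside Part (2), and again for the right action $O_N(W)*_N v$ in Part (1). With that reduction inserted, and the appeal to $O_N(V)$ replaced by the correct inductive quantification, your plan matches the standard proof.
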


\subsection{First cohomology of an associative algebra} 

The cohomology theory of associative algebras is given by Hochschild in \cite{Hochschild}.

\begin{defn}
Let $(A, *)$ be an associative algebra, $B$ be an $A$-bimodule. A linear map $f:A\to B$ is a derivation, if for any $u, v\in A$, $$f(u*v) = u* f(v) + f(u)*v.$$
We say $f$ is an inner derivation, if there exists some $w\in B$ such that 
$$f(v) = v*w - w*v. $$
The first cohomology $H^1(A, B)$ is defined as the quotient of the space of derivations modulo the subspace of inner derivations. 
\end{defn}

\begin{thm}
$A$ is semisimple if and only if $H^1(A, B)=0$ for every $A$-bimodule $B$. In other words, every derivation is inner.
\end{thm}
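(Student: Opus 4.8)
The plan is to prove the classical Hochschild vanishing theorem, which is a cornerstone result characterizing semisimple associative algebras. This is a standard theorem and my approach would follow the well-known route through the Wedderburn structure theory and the interpretation of derivations in terms of bimodule extensions.

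\textbf{The easy direction ($\Leftarrow$).} First I would prove that if $H^1(A,B)=0$ for every $A$-bimodule $B$, then $A$ is semisimple. The cleanest way is to show that every short exact sequence of $A$-modules splits, using the fact that first-order obstructions to splitting live in a first cohomology group. Concretely, given a left $A$-module $M$ with a submodule $N$, one wants an $A$-module projection $M\to N$. Starting from any linear (vector-space) projection $\pi_0$, the failure of $\pi_0$ to be an $A$-module map defines a derivation from $A$ into the bimodule $\Hom_{\C}(M/N, N)$ (with the natural left and right actions induced from the module structures). The hypothesis $H^1=0$ forces this derivation to be inner, and the inner part is exactly the correction term needed to adjust $\pi_0$ into an honest module homomorphism. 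Hence every submodule is a direct summand, so $A$ is semisimple. One must be a little careful to package the relevant $\Hom$-space as a genuine $A$-bimodule and to verify that the derivation condition translates precisely into the module-map defect, but this is routine.

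\textbf{The hard direction ($\Rightarrow$).} For the converse, I would assume $A$ is semisimple and show that every derivation $f:A\to B$ is inner. By Wedderburn--Artin, $A$ is a finite product of matrix algebras over division rings, and in particular $A$ has a separating idempotent / is a separable algebra. The standard argument uses the \emph{separability idempotent} $e=\sum_i x_i\otimes y_i\in A\otimes_{\C} A^{\mathrm{op}}$ satisfying $\sum_i x_i y_i = 1$ and $a\cdot e = e\cdot a$ for all $a$. Given a derivation $f$, I would define the candidate element
\begin{equation*}
w = \sum_i x_i * f(y_i) \in B,
\end{equation*}
and then verify by a direct computation, using the derivation property $f(u*v)=u*f(v)+f(u)*v$ together with the centralizing property of $e$, that $f(v)=v*w-w*v$ for all $v\in A$. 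This exhibits $f$ as inner and completes the proof.

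\textbf{Main obstacle.} The genuinely substantial input is the existence of the separability idempotent (equivalently, the semisimplicity-to-separability implication), which rests on Wedderburn--Artin structure theory; everything after that is a formal manipulation of the derivation identity. I expect the main bookkeeping difficulty to be verifying that $w$ as defined satisfies $f(v)=v*w-w*v$, since this requires carefully inserting $f(\sum_i x_i y_i)=f(1)=0$ at the right moment and exploiting $v\cdot e=e\cdot v$. Since this theorem is entirely standard, in the actual write-up I would most likely cite \cite{Hochschild} rather than reproduce the full argument, and include at most a sketch of the separability-idempotent computation for the reader's convenience.
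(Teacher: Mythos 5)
This theorem is stated in the paper without proof, being quoted directly from Hochschild's original article \cite{Hochschild}, and your sketch is the standard argument and is correct in both directions: the splitting-obstruction argument (defect of a linear projection as a derivation into $\Hom_{\C}(M/N,N)$) for the easy direction, and the separability-idempotent computation $w=\sum_i x_i * f(y_i)$, which indeed yields $f(v)=v*w-w*v$ in the paper's sign convention, for the hard direction. Your closing plan to simply cite \cite{Hochschild} is exactly what the paper does, and your remark that the hard direction requires $A$ (but not $B$) to be finite-dimensional matches the caveat the paper records immediately after the theorem.
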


\begin{rema}
We should note that the proof of the only-if-part in \cite{Hochschild} requires $A$ to be finite-dimensional but does not require $B$ to be finite-dimensional. 
\end{rema}

\begin{rema}
We did not mention inner derivations for vertex algebras, because in our current context, all such inner derivations are automatically zero. In general, if $V$ as a meromorphic open-string vertex algebra and $W$ is a bimodule, then an inner derivation $F$ is given by the formula
$$F(v) = Y_W^L(v, x)w - e^{xL(-1)}Y_W^R(w, -x)v.$$
But in our current situation, $Y_W^R$ is given by $Y_{WV}^W$. Thus $F = 0$. 
\end{rema}


\subsection{The map $H^1(V, W) \to H^1(A_N(V), A_N(W))$}
\begin{prop}\label{Prop-2-23}
Let $F: V\to W$ be a derivation from the VOA $V$ to the $V$-module $W$. For any $N\in \N$, we define $f: A_N(V) \to A_N(W)$ by
$$f(v) = F(v) + O_N(W). $$
Then $f$ is a derivation from the associative algebra $A_N(V)$ to the $A_N(V)$-bimodule $A_N(W)$. The map $F\mapsto f$ gives a linear map from $H^1(V, W)\to H^1(A_N(V), A_N(W))$.
\end{prop}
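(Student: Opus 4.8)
The plan is to verify directly that $f$ is well-defined and satisfies the Hochschild derivation identity, and then to check that the assignment $F\mapsto f$ descends to a map on cohomology classes. The first task, well-definedness, is the crux: I must show that $F(O_N(V))\subseteq O_N(W)$, so that $f(v+O_N(V)) = F(v)+O_N(W)$ does not depend on the representative $v$. To do this I would take the two kinds of spanning elements of $O_N(V)$ and apply $F$ to each. For an element of the form $\Res_x x^{-2N-2-n}Y((1+x)^{L(0)+N}u, x)v$, I apply the derivation property $F(Y(a, x)b) = Y_W(a, x)F(b) + Y_{WV}^W(F(a), x)b$ componentwise; since $F$ commutes with $L(0)$ up to the grading shift (property (1) of a derivation, $F\circ L(0)=\d_W\circ F$) and with $L(-1)$, the factor $(1+x)^{L(0)+N}$ transports across $F$, splitting the result into a ``$Y_W$-term'' lying in $O_N(W)$ by its definition, and a ``$Y_{WV}^W$-term'' of the form $\Res_x x^{-2N-2-n}Y_{WV}^W((1+x)^{\d_W+N}F(u), x)v$, which lies in $O_N(W)$ precisely by Lemma \ref{HY-Lemma}\eqref{HY-Lemma-1} (the ``in particular'' clause). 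For the element $L(0)v+L(-1)v$, I compute $F(L(0)v+L(-1)v) = \d_W F(v) + L(-1)F(v)$, using $F\circ L(0)=\d_W\circ F$ and $F\circ L(-1)=L(-1)\circ F$, and this is exactly an element of the second family of relations defining $O_N(W)$. Hence $F$ carries $O_N(V)$ into $O_N(W)$ and $f$ is well-defined.

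Next I would establish the derivation identity $f(u*_N v) = u*_N f(v) + f(u)*_N v$ in $A_N(W)$. The product $*_N$ on $A_N(V)$ and the two-sided actions on $A_N(W)$ are both built from the same residue-of-$(1+x)^{\bullet}Y(\cdot)$ template, so applying $F$ term by term to $u*_N v = \sum_{m=0}^N (-1)^m\binom{m+N}{N}\Res_x x^{-N-m-1}Y((1+x)^{L(0)+N}u, x)v$ and splitting each summand by the derivation property produces exactly $\sum_m(\cdots)Y_W((1+x)^{L(0)+N}u, x)F(v)$ and $\sum_m(\cdots)Y_{WV}^W((1+x)^{\d_W+N}F(u), x)v$. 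The first sum matches $u*_N f(v)$ and the second matches $f(u)*_N v$ after again moving $(1+x)^{L(0)+N}$ through $F$ via $F\circ L(0)=\d_W\circ F$; here the match of the second sum with the defined right action $w*_N v$ is the point requiring care, since the sign $(-1)^m$ and the binomial weighting must line up with the definitions of $*_N$ on the bimodule. I would reconcile these by the same residue manipulations that prove $A_N(W)$ is a bimodule in \cite{HY-lio-aa}, so no genuinely new identity is needed.

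Finally, to see that $F\mapsto f$ descends to $H^1(V,W)\to H^1(A_N(V), A_N(W))$, I observe that $H^1(V,W)$ is the full space of grading-preserving derivations (there are no nonzero vertex-algebraic inner derivations in our setting, as noted in the remark following the Hochschild theorem), so there is nothing to quotient on the source side; thus it suffices that the target-side class $[f]\in H^1(A_N(V), A_N(W))$ is well-defined, which it is once $f$ is a derivation, and that $F\mapsto [f]$ is linear, which is immediate since each construction step is linear in $F$. I expect the main obstacle to be the bookkeeping in the well-definedness step: carefully transporting $(1+x)^{L(0)+N}$ through $F$ and recognizing the resulting $Y_{WV}^W$-term as an element covered by Lemma \ref{HY-Lemma}\eqref{HY-Lemma-1}, since that is where the nonstandard relation $\d_W w + L(-1)w$ in $O_N(W)$ (rather than $L(0)w+L(-1)w$) is exactly what makes the argument close.
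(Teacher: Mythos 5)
Your proposal matches the paper's proof essentially step for step: the same splitting of the two families of spanning elements of $O_N(V)$, the same transport of $(1+x)^{L(0)+N}$ through $F$ via $F\circ L(0)=\d_W\circ F$ (so that $(1+x)^{\wt(u)+N}F(u)=(1+x)^{\d_W+N}F(u)$), the same appeal to Lemma \ref{HY-Lemma} (\ref{HY-Lemma-1}) for the $Y_{WV}^W$-term, and the same term-by-term application of $F$ to the $*_N$-product, with the cohomology-level descent being immediate since there are no nonzero inner derivations on the vertex-algebra side. The sign bookkeeping you flag is handled in the paper exactly as you propose, by applying $\sum_{m=0}^N(-1)^m\binom{m+N}{N}\Res_x x^{-N-m-1}(1+x)^{\wt(u)+N}$ to both sides of the derivation identity and matching the two resulting sums against the bimodule actions.
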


\begin{proof}
The composition $v\mapsto F(v) \mapsto F(v)+O_N(W)$ gives a well-defined map from $V$ to $A_N(W)$. We show that $F$ maps $O_N(V)$ to $O_N(W)$. Then the composition factorizes through $A_N(V)= V/O_N(V)$, inducing $f: A_N(V) \to A_N(W)$. 

By definition of $F$, for every $v\in V$
$$F(L(-1)v + L(0)v) = L(-1) F(v) + \d_W F(v).$$
So $F$ maps $L(-1)v + L(0)v$ to $O_N(W)$. 

Also by definition of $F$, for every $u, v\in V$, we have
\begin{equation}\label{def-der}
    F(Y(u, x)v)=Y_W(u, x)f(v) + Y_{WV}^W(f(u), x)v.
\end{equation}
Without loss of generality, assume $u$ is homogeneous and let $n\in \N$, we take $\Res_x x^{-2N-2-n}(1+x)^{\text{wt}(u)+N}$ on both sides of (\ref{def-der}), we have 
\begin{align*}
    & F\left(\Res_x x^{-2N-2-n}Y((1+x)^{\text{wt}(u)+N}u, x)v\right)\\
    & \quad = \Res_x x^{-2N-2-n}Y_W((1+x)^{\text{wt}(u)+N}u, x)F(v) + \Res_x x^{-2}Y_{WV}^W((1+x)^{\text{wt}(u)+N}F(u), x)v
\end{align*}
Note that since $\d_W F(u) = F(L(0) u) = \text{wt}(u) F(u)$, we have $(1+x)^{\text{wt}(u)}=(1+x)^{\d_W}F(u)$. Thus what we have indeed is
\begin{align}
    & F\left(\Res_x x^{-2N-2-n}Y((1+x)^{L(0)+N}u, x)v\right)\nonumber \\
     & =  \Res_x x^{-2N-2-n}Y_W((1+x)^{L(0)+N}u, x)F(v) \label{Prop-2-23-1}\\
      & \quad + \Res_x x^{-2N-2-n}Y_{WV}^W((1+x)^{\d_W+N}F(u), x)v. \label{Prop-2-23-2}
\end{align}
This equality obviously extends to nonhomogeneous $u\in V$. 
By definition of $O_N(W)$, (\ref{Prop-2-23-1}) falls in $O_N(W)$. By Lemma 4.4 Part (2) in \cite{HY-lio-aa} (recorded as Lemma \ref{HY-Lemma} (\ref{HY-Lemma-1})) here), (\ref{Prop-2-23-2}) is also in $O_N(W)$. Thus $F$ maps every element of the spanning set in $O_N(V)$ into $O_N(W)$. 

Now we show that 
\begin{align}
    f(u*_Nv) = u*_N f(v)+f(u)*_N v.\label{Prop-2-23-3}
\end{align}
We simply apply the operator $\sum_{m=0}^N (-1)^m \binom{m+N} N \Res_x x^{-N-m-1} (1+x)^{\wt(u)+N}$ to both sides of (\ref{def-der}). Using the fact that $F((1+x)^{L(0)} u) = (1+x)^{\d_W} F(u) = (1+x)^{\wt(u)} F(u)$, we have
\begin{align*}
 & \sum_{m=0}^N (-1)^m \binom{m+N}N \Res_x x^{-N-m-1} F\left(
 Y_V((1+x)^{L(0)+N}u, x)v)\right) \\
 = &  
\sum_{m=0}^N (-1)^m \binom{m+N}N \Res_x  x^{-N-m-1} \left( Y_W((1+x)^{L(0)+N}u, x)F(v) +  Y_{WV}^W ((1+x)^{\d_W+N} F(u), x)v\right), 
\end{align*}
which is precisely (\ref{Prop-2-23-3}). 
\end{proof}

\begin{cor}\label{Corollary-1-22}
Let $F:V\to W$ be a derivation that sits in the  kernel of the map $H^1(V, W)\to H^1(A_N(V), A_N(W))$. Then there exists some $w\in W$ such that 
$$F(v) = -\Res_x Y_W((1+x)^{L(0)-1}v, x) w + O(W). $$
\end{cor}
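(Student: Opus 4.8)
The plan is to translate the cohomological hypothesis into the language of associative-algebra derivations and then invoke the commutator formula already recorded in Lemma~\ref{HY-Lemma}. First I would recall that, by the remark preceding this subsection, inner derivations of the vertex algebra $V$ into $W$ are automatically zero (since $Y_W^R$ is given by $Y_{WV}^W$), so that $H^1(V,W)$ is literally the space of grading-preserving derivations $F\colon V\to W$. Under the map constructed in the preceding proposition, such an $F$ is sent to the class in $H^1(A_N(V),A_N(W))$ of the associative-algebra derivation $f(v)=F(v)+O_N(W)$. Consequently, saying that $F$ lies in the kernel of $H^1(V,W)\to H^1(A_N(V),A_N(W))$ is exactly saying that $f$ represents the zero class, i.e.\ that $f$ is an inner derivation of $A_N(V)$ with values in the bimodule $A_N(W)$. (Here the symbol $O(W)$ in the statement is to be read as $O_N(W)$ for the fixed level $N$.)

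Next I would unwind the definition of inner derivation for associative algebras. By that definition there exists an element $\bar w\in A_N(W)$ with $f(v)=v*_N\bar w-\bar w*_N v$ for every $v\in A_N(V)$. Lifting $\bar w$ to some representative $w\in W$, this identity reads
$$F(v)+O_N(W)=v*_N w-w*_N v$$
as an equality in $A_N(W)=W/O_N(W)$; equivalently, $F(v)$ and $v*_N w-w*_N v$ agree modulo $O_N(W)$.

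Finally I would substitute the explicit commutator formula. Applying the second identity of Lemma~\ref{HY-Lemma}(\ref{HY-Lemma-2}), namely $v*_N w-w*_N v=-\Res_x Y_W((1+x)^{L(0)-1}v,x)w$ in $A_N(W)$, and combining it with the previous display, yields
$$F(v)=-\Res_x Y_W((1+x)^{L(0)-1}v,x)w+O_N(W),$$
which is the claimed formula.

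As for difficulty, the argument is essentially definition-chasing, so I do not expect a substantial obstacle. The only points requiring care are the correct identification of the kernel---because $H^1(V,W)$ carries no further quotient by inner derivations, the kernel is precisely the set of those $F$ whose induced $f$ is inner---and the bookkeeping that the lift $w$ of $\bar w$ is well-defined only modulo $O_N(W)$, which is harmless since the conclusion is itself an equality modulo $O_N(W)$. I would also double-check that it is the $Y_W$-form rather than the $Y_{WV}^W$-form of the commutator in Lemma~\ref{HY-Lemma}(\ref{HY-Lemma-2}) that matches the asserted expression.
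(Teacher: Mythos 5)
Your proof is correct and takes essentially the same route as the paper's: identify the kernel condition with innerness of the induced derivation $f$ on $A_N(V)$, lift the implementing element to $w\in W$, and apply the $Y_W$-form of the commutator identity in Lemma~\ref{HY-Lemma}~(\ref{HY-Lemma-2}). The extra bookkeeping you flag (the lift of $\bar w$ being well-defined only modulo $O_N(W)$, and $H^1(V,W)$ having no further quotient by inner derivations) is sound and is simply left implicit in the paper's two-line argument.
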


\begin{proof}
Since $v\mapsto F(v)+O(W)$ is inner, we know that 
$$F(v) = v*_N w - w*_N v + O(W) = -\Res_x Y_W((1+x)^{L(0)-1}v, x)w + O(W)$$
where the last equality follows from Lemma \ref{HY-Lemma} (\ref{HY-Lemma-2}). 
\end{proof}

\begin{rema}\label{Rmk-1-23}
In case $A_N(V)$ is semisimple,  $H^1(A_N(V), A_N(W)) = 0$. We once hoped to use it to classify $H^1(V, W)$. But it turns out that this result helps only when $W=V$ and when $A(V)$ is not too trivial, as will be addressed in later sections. 
\end{rema}

\begin{rema}
Possibly, the result might be helpful when Zhu's algebra is not semisimple. One natural problem is to determine whether or not the map $H^1(V, W)\to H^1(A_N(V), A_N(W))$ is surjective, which is interesting and shall be studied in the future.  
\end{rema}

\section{First cohomologies of affine VOAs}\label{Section-2}

In this section, we will use the Whitehead lemma for simple Lie algebras to study the first cohomologies of the affine VOA associated with a simple Lie algebra $\g$.

\subsection{Derivations of Lie algebras and Whitehead lemma} Let $\mathfrak{g}$ be a Lie algebra and $M$ be a $\mathfrak{g}$-module. A linear map $f: \g \to M$ is a derivation if 
$$f([x,y])=x f(y) - y f(x). $$
$f$ is called an inner derivation, if there exists $m\in M$ such that
$$f(x) = x m.$$
\begin{lemma}[Whitehead lemma]
If $\mathfrak{g}$ is simple, then for every $\g$-module $M$, every derivation $f: \g \to M$ is inner (see \cite{Sam}).
\end{lemma}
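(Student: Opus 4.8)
The plan is to recognize that a derivation $f:\g\to M$ is precisely a $1$-cocycle, and to reinterpret such a cocycle as a one-dimensional extension of the trivial module by $M$; complete reducibility will then force the extension to split, and the splitting is exactly the datum of an element implementing $f$ as an inner derivation. Concretely, I would form the vector space $\tilde M = M \oplus \F v_0$ and define a $\g$-action by $x\cdot(m+\lambda v_0) = x\cdot m + \lambda f(x)$, where $x\cdot m$ denotes the given action on $M$ and $f(x)\in M$.

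The first step is to check that this really is a $\g$-module. On $M$ the action is unchanged, so the only new relation to verify is on $v_0$: one computes $[x,y]\cdot v_0 = f([x,y])$ while $x\cdot(y\cdot v_0) - y\cdot(x\cdot v_0) = x\cdot f(y) - y\cdot f(x)$, and these agree exactly because $f$ is a derivation. Thus $M$ is a $\g$-submodule of $\tilde M$, and since $x\cdot v_0 = f(x)\in M$ the quotient $\tilde M/M$ is the trivial one-dimensional module $\F$. In other words $f$ produces a short exact sequence $0\to M\to \tilde M\to \F\to 0$ of $\g$-modules.

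The second step is to split this sequence. Because $\g$ is simple, its Killing form is nondegenerate, which yields a Casimir element and hence Weyl's complete reducibility theorem for finite-dimensional modules; applying it to $\tilde M$ produces a one-dimensional $\g$-submodule complementary to $M$, spanned by some $\tilde v = v_0 - m_0$ with $m_0\in M$. Submodule-ness forces $x\cdot\tilde v\in \F\tilde v\cap M = 0$, i.e.\ $f(x) - x\cdot m_0 = 0$ for all $x\in\g$. Hence $f(x) = x\cdot m_0$, exhibiting $f$ as the inner derivation associated to $m_0$, which is what we wanted.

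The main obstacle is the finite-dimensionality hypothesis hidden in the appeal to Weyl's theorem: complete reducibility, and therefore the splitting, can fail for infinite-dimensional $M$. When $M$ is finite-dimensional (the case that occurs in our applications) the argument above is complete. For the general statement one instead constructs $m_0$ directly from the Casimir operator $c=\sum_i x_i y_i$ built from dual bases of $\g$: on the summand of $M$ where $c$ acts invertibly one sets $m_0 = c^{-1}\sum_i x_i\cdot f(y_i)$ and verifies $f(x)=x\cdot m_0$ using the invariance of the Killing form, while on any trivial constituent the derivation condition together with $\g=[\g,\g]$ (valid since $\g$ is simple) forces $f$ to vanish. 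This last bookkeeping is the more delicate part, and is where I would expect the real work to lie.
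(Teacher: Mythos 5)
Your construction of the extension $0\to M\to \tilde M\to \C\to 0$ and the splitting via Weyl's complete reducibility is correct, and it is essentially the textbook proof of this lemma; the paper itself offers no proof at all (the statement simply carries the citation \cite{Sam}, where the result is established for finite-dimensional representations by exactly this kind of Casimir/complete-reducibility argument). So for finite-dimensional $M$ your write-up is complete and matches the standard treatment.

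The genuine problem is your final paragraph, where you try to drop the finite-dimensionality hypothesis --- a real issue, since the lemma as stated quantifies over \emph{every} $\g$-module $M$. Your proposed dichotomy (a summand where the Casimir $c$ acts invertibly, plus trivial constituents) does not exist for general infinite-dimensional $M$: the Casimir need not admit any eigenspace or Fitting decomposition there, and it can act by zero on a module that is far from trivial. In fact no repair is possible, because the statement for arbitrary $M$ is false. Take $\g=\mathfrak{sl}_2$ with standard basis $e,h,f$, and let $M$ be the Verma module $M(-2)$ of highest weight $-2$; it is simple, infinite-dimensional, and the Casimir acts on it by $0$. Inside the Verma module $M(0)$ with highest weight vector $v_0$, the submodule generated by $f\cdot v_0$ is isomorphic to $M(-2)$ and gives a non-split sequence $0\to M(-2)\to M(0)\to \C\to 0$; the associated derivation $D:\g\to M(-2)$, $D(x)=x\cdot v_0$ (so $D(e)=D(h)=0$, $D(f)=f\cdot v_0$) is not inner, since any $m_0$ with $x\cdot m_0 = D(x)$ would be a weight-zero vector of $M(-2)$, hence zero. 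What rescues the paper is that every $\g$-module to which the lemma is actually applied --- $W_{[1]}$ for irreducible $W$, $W_{[1]}/L(-1)W_{[0]}$, or the image of $W_{[1]}$ in $A_0(W)$ --- is \emph{locally finite}, i.e.\ a sum of finite-dimensional $\g$-submodules, because modules over these rational affine VOAs decompose into integrable irreducibles with finite-dimensional weight spaces. For such $M$ your argument suffices after a one-line reduction: $f(\g)$ spans a finite-dimensional subspace, hence lies in a finite-dimensional submodule $M'\subseteq M$, and $f$ viewed as a derivation into $M'$ is inner by your finite-dimensional case, with implementing element $m_0\in M'\subseteq M$. So the statement you should prove (and the statement the paper should really cite) is the lemma for finite-dimensional, or locally finite, modules; with that restatement and the reduction above, your proof covers everything the paper needs.
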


\subsection{Affine VOA assoicated with simple Lie algebras}

The affine VOA assoicated with a simple Lie algebra was first constructed in \cite{Frenkel-Zhu}. Here we give a brief review following \cite{LL}. 

Let $\g$ be a finite-dimensional simple Lie algebra. Let $\Phi = \Phi_+ \cup \Phi_-$ be the root system, $\Phi_+$ (resp. $\Phi_-$) be the set of positive roots (resp. negative) roots. Let $\mathfrak h$ be a Cartan subalgebra. Denote the triangular decomposition of $\g$ by 
$$\g = \n_+ \oplus \h \oplus \n_-. $$
where $\n_\pm = \coprod_{\alpha\in \Phi_\pm}\g_{\alpha}$. For every $\alpha\in \Phi_+$, we denote by $h_\alpha$ the unique vector such that $[\g_\alpha, \g_{-\alpha}] = \C h_\alpha$ and $\alpha(h_\alpha)=2$.  

For $\lambda\in \mathfrak{h}^*$, we consider the Verma module, i.e., the induced $\g$-module from the one-dimensional $(\n_+\oplus \h)$-module where $\n_+$ acts trivially and $h\in \h$ acts by the scalar $\lambda(h)$. Recall that $\lambda$ is dominant integral, if $$\lambda(h_\alpha) =  \frac{2\langle\lambda,\alpha\rangle}{\langle\alpha,\alpha\rangle}\in \N$$ for every $\alpha\in \Phi_+$, here $\langle\cdot, \cdot\rangle$ is the normalized Killing form on $\h^*$ such that $\langle\alpha, \alpha\rangle = 2$ for every long root $\alpha$. If $\lambda$ is dominant integral, the Verma module has a unique irreducible quotient. We denote the irreducible quotient by $M_\lambda$. 

Now we consider the affine Lie algebra
$$\hat{\g} = \g \otimes_\C \C[t, t^{-1}] \oplus \C k$$
with the Lie bracket 
\begin{align*}
    [a\otimes t^m, b\otimes t^n] &= [a,b]\otimes t^{m+n} + m\langle a,b\rangle\delta_{m+n,0}k, \\
    [k, g\otimes t^m] &= 0,
\end{align*}
where $a,b\in \g, m,n\in \Z$, $\langle\cdot, \cdot\rangle$ is the normalized Killing form on $\mathfrak{g}$, $k$ is the central element. For $l\in \C$, let $V_{\hat\g}(l, \lambda)$ be the induced $\hat{g}$-module from the $(\g\otimes t\C[t] \oplus \g \oplus \C k)$-module $M_\lambda$, where $\g\otimes t\C[t]$ acts trivially, $k$ acts by the scalar $l$.

It follows from Section 6.2 and Section 6.6 in \cite{LL} and \cite{DLM-Regular} that
\begin{enumerate}
    \item $V_{\hat{\g}}(l, 0)$ is a VOA that has a simple quotient $L_{\hat\g}(l, 0)$. 
    \item $V_{\hat{\g}}(l, \lambda)$ has a unique quotient $L_{\hat\g}(l, \lambda)$ that is an irreducible $V_{\hat\g}(l, 0)$-module. 
    \item If the level $l$ is a positive integer, then every irreducible $L_{\hat\g}(l, 0)$-modules is isomorphic to $L_{\hat\g}(l, \lambda)$ for some dominant integral $\lambda\in \h^*$ satisfying $\lambda( h_{\theta})\leq l$, where $\theta$ is the highest root of $\g$. 
    \item If the level $l$ is a positive integer, then every weak $L_{\hat\g}(l, 0)$-module is a direct sum of irreducible $L_{\hat\g}(l,0)$-modules. 
\end{enumerate}

\subsection{Image in $H^1(A_0(V), A_0(W))$}

Let $l\in \C$ and $V=L_{\hat\g}(l, 0)$. Let $W$ be any generalized $V$-module. The grading operator $\d_W$ of $W$ does not necessarily coincides with the semisimple part of $L(0)$. Let $A_0(V)$ be the (level-0) Zhu's algebra assoicated with $V$ and $A_0(W)$ be the $A_0(V)$-bimodule assoicated with $W$. We emphasize that $A_0(W)$ we used here is subject to the additional relation $(\d_W + L(-1))w \in O_0(W)$ compared to $A(W)$ in \cite{Frenkel-Zhu}. 

\begin{prop}\label{Affine-Zhu}
Let $F: V \to W$ be any derivation. Then there exists an element $w_{(1)}\in W_{[1]}$, such that 
$$F(v) \equiv (w_{(1)})_0 v \mod O_0(W).$$
\end{prop}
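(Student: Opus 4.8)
The plan is to reduce everything to the weight-one subspace $\g\cong V_{[1]}$, which generates $V$, and to exploit the fact that the induced map on Zhu algebras converts the derivation condition into a \emph{Lie}-algebra derivation, to which the Whitehead lemma applies. Write $a$ for the weight-one element $a_{-1}\one\in V$ attached to $a\in\g$. Since $F$ preserves the grading, $F(a)\in W_{[1]}$, so I set $\phi(a):=F(a)+O_0(W)\in\overline{W_{[1]}}$, where $\overline{W_{[1]}}$ denotes the image of $W_{[1]}$ in $A_0(W)$.

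First I would record the algebraic structures at the Zhu level. By Zhu's commutator formula (the $A_0(V)$-analogue of Lemma \ref{HY-Lemma}(\ref{HY-Lemma-2})), for weight-one $a,b$ one has $a*_0 b-b*_0 a=a_0 b=[a,b]$ in $A_0(V)$, so $\g$ embeds into $A_0(V)$ as a Lie subalgebra under the commutator. Consequently $A_0(W)$, being an $A_0(V)$-bimodule, becomes a $\g$-module via $a\cdot\overline m:=a*_0\overline m-\overline m*_0 a$. Using the map $F\mapsto f$ constructed in the previous subsection (which makes $f=F+O_0(W)$ an associative derivation $A_0(V)\to A_0(W)$), a direct computation gives
$$\phi([a,b])=f(a*_0 b-b*_0 a)=a\cdot\phi(b)-b\cdot\phi(a),$$
so $\phi$ is a derivation of $\g$ into the $\g$-module $A_0(W)$.

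Next I would show $\phi$ actually maps into the $\g$-submodule $\overline{W_{[1]}}$ and apply Whitehead there. For $w\in W_{[1]}$, Lemma \ref{HY-Lemma}(\ref{HY-Lemma-2}) gives $a*_0 w-w*_0 a=-w_0 a$; since $w_0$ preserves the weight, $w_0 a\in W_{[1]}$, whence $a\cdot\overline w=\overline{-w_0 a}\in\overline{W_{[1]}}$ and $\overline{W_{[1]}}$ is a $\g$-submodule of $A_0(W)$. As $\phi$ takes values in $\overline{W_{[1]}}$, the Whitehead lemma (valid for any $\g$-module since $\g$ is simple) produces $m\in\overline{W_{[1]}}$ with $\phi(a)=a\cdot m$ for all $a$. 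Choosing a weight-one representative and absorbing a sign, write $m=-\overline{w_{(1)}}$ with $w_{(1)}\in W_{[1]}$; then $\phi(a)=-a\cdot\overline{w_{(1)}}=(w_{(1)})_0 a$ in $A_0(W)$, i.e. $F(a)\equiv (w_{(1)})_0 a\bmod O_0(W)$ for every $a\in\g$.

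Finally I would upgrade this agreement from generators to all of $V$. The zero-mode map $v\mapsto(w_{(1)})_0 v$ is itself a derivation $V\to W$, so $G:=F-(w_{(1)})_0$ is a derivation and $g:=G+O_0(W)$ is an associative derivation $A_0(V)\to A_0(W)$ that vanishes on $\g$. Since $A_0(V)$ is generated as an associative algebra by the image of $\g$ (it is a quotient of $U(\g)$), and a derivation vanishing on an algebra-generating set vanishes identically (the associative analogue of Proposition \ref{Der-gen}), we get $g\equiv 0$; that is, $F(v)\equiv(w_{(1)})_0 v\bmod O_0(W)$ for all $v\in V$. The step I expect to be the main obstacle is guaranteeing that the Whitehead element can be taken inside $\overline{W_{[1]}}$ — verifying carefully that $\overline{W_{[1]}}$ is a genuine $\g$-submodule and that $\phi$ lands in it — together with the bookkeeping of signs and gradings relating $a\cdot\overline w$ to the zero-mode $(w_{(1)})_0$; the remaining steps are formal once these identifications are in place.
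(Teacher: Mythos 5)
Your proposal is correct and shares the paper's skeleton --- restrict to $\g\cong V_{(1)}$, show the induced map into the image of $W_{[1]}$ in $A_0(W)$ is a Lie algebra derivation, apply the Whitehead lemma, then propagate from the generators to all of $V$ --- but the two key steps are executed by genuinely different means. For the derivation property, the paper works with the action $a\cdot\overline{w}=\overline{a(0)w}$ and proves $\theta([a,b])\equiv a(0)\theta(b)-b(0)\theta(a)$ by a direct formal-variable computation of $F(a(-1)\one)_0\,b(-1)\one$ modulo $O_0(W)$, using $e^{xL(-1)}(1+x)^{\d_W}\equiv 1$ and the change of variable $y=-x/(1+x)$; you instead push $F$ through the map $H^1(V,W)\to H^1(A_0(V),A_0(W))$ constructed in Section 2 and combine the associative derivation property of $f$ with Zhu's commutator formula, using the commutator action $a\cdot\overline m=a*_0\overline m-\overline m*_0 a$. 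The two module structures agree on $\overline{W_{[1]}}$, since by Lemma \ref{HY-Lemma}(\ref{HY-Lemma-2}) both actions equal $-\overline{w_0a}=\overline{a(0)w}$, so the arguments are equivalent; yours buys economy (it reuses the Section 2 machinery and avoids the residue manipulation), while the paper's is self-contained. More significantly, your final paragraph supplies a step the paper leaves implicit: after matching $F$ with $(w_{(1)})_0$ on the elements $a(-1)\one$ modulo $O_0(W)$, the paper simply asserts that the conclusion follows, whereas you justify it by observing that $G=F-(w_{(1)})_0$ induces an associative derivation of $A_0(V)$ vanishing on the image of $\g$, which generates $A_0(V)$ as an algebra (it is a quotient of $U(\g)$ by Frenkel--Zhu), hence vanishes identically. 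Note finally that both proofs invoke the Whitehead lemma for a possibly infinite-dimensional $\g$-module, exactly in the generality stated in the paper, so no issue arises there that is peculiar to your argument.
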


\begin{proof}
We first recall some basic facts. 

\begin{enumerate}
    \item $V_{(1)} \simeq \g$. The map $a(-1)\one\otimes b(-1)\one \mapsto a(0)b(-1)\one$ coincides with the Lie algebra structure on $V_{(1)}$. 
    \item $W_{[1]}$ is a $\g$-module with the following action.  
    $$a\in \g, a\cdot w = a(0)w.$$
    \item $W_{[1]}\cap O_0(W)$ is a $\g$-submodule in $W_{[1]}$. This follows from the fact that $w\in O_0(W)\Rightarrow \Res_x Y_W((1+x)^{L(0)-1}a(-1)\one, x)w = a(0)w \in O_0(W)$.
    \item The image $W_{[1]} + O_0(W)$ of $W_{[1]}$ in $A_0(W)$ is then a $\g$-module. This essentially follows from the isomorphism $W_{[1]} + O_0(W)/ O_0(W)\simeq W_{[1]} / W_{[1]}\cap O_0(W)$. 
\end{enumerate}

For any fixed $a\in \g$, $F(a(-1)\one)\in W_{[1]}$. We write $\theta(a) \in W_{[1]}+O_0(W)$ as the image of $F(a(-1)\one)$ in $A_0(W)$. Then the map $a \mapsto \theta(a)$ gives a map from $\g$ to the $\g$-module $W_{[1]}+O_0(W)$. 

From the fact that $F$ is a derivation, for any $a,b\in \g$, 
$$F(a(0)b(-1)\one) = a(0) F(b(-1)\one) + F(a(-1)\one)_0 b(-1)\one.$$
Note that $a(0)b(-1)\one = [a,b](-1)\one$, So the image of the left-hand-side in $W_{[1]}+O_0(W)$ is precisely $\theta([a,b])$. The image of first term on the right-hand-side is $a(0)\theta(b)$. For the second term on the right-hand-side, we compute as follows
\begin{align}
    F(a(-1)\one)_0 b(-1)\one &= \Res_x Y_{WV}^W(\theta(a), x)b(-1)\one \nonumber \\
    &= \Res_x e^{xL(-1)} Y_W(b(-1)\one,-x)\theta(a) \nonumber \\
    &\equiv \Res_x (1+x)^{-\d_W}Y_W(b(-1)\one,-x)\theta(a) \mod O_0(W) \label{3-2-line-3}\\
    &= \Res_x Y_W\left((1+x)^{-L(0)}b(-1)\one,-\frac{x}{1+x} \right) (1+x)^{-\d_W}\theta(a)\nonumber\\
    &= \Res_y Y_W\left((1+y)^{L(0)}b(-1)\one,y \right) (1+y)^{\d_W}\theta(a) \cdot (-(1+y)^{-2})\label{3-2-line-5}\\
    &= -\Res_y Y_W(b(-1)\one, y) \theta(a)\nonumber\\
    &= -b(0)\theta(a),\nonumber
\end{align}
where (\ref{3-2-line-3}) follows from the formula proved in Lemma 2.5 of \cite{H-aa-rep-va}
$$e^{xL(-1)}(1+x)^{\d_W} = (1+x)^{\d_W + L(-1)} \equiv 1 \mod O_0(W);$$
(\ref{3-2-line-5}) follows from the change of variable formula
$$\Res_x f(x) = \Res_y f(g(y))g'(y),$$
with $g(y)=-y/(1+y)$ (see \cite{Zhu-Modular}). 
Thus we have shown that 
$$\theta([a,b]) \equiv a(0)\theta(b) - b(0)\theta(a) \mod O_0(W). $$
This is to say that the map $\theta: \g \to W_{[1]}+O_0(W)$ is a Lie algebra derivation. From Whitehead Lemma, there exists an element $w\in W_{[1]}+O_0(W)$ such that 
$$\theta(a) = a\cdot w = a(0)w. $$
This is to say that 
$$F(a(-1)\one) = a(0)w + O_0(W).$$
The conclusion then follows with the choice $w_{(1)} = -w$ and the fact that $a(0)w \equiv -w_0 a \mod O_0(W)$ (cf. Lemma \ref{HY-Lemma}). 
\end{proof}

\begin{rema}
The conclusion also holds for $V = V_{\hat\g}(l,0)$. Note also that $W$ is not necessarily graded by $L(0)$. Also note that Proposition \ref{Affine-Zhu} does not have any restriction of the level $l$. 
\end{rema}

\begin{rema}
The result only states that the image of $H^1(V, W)$ is zero in $H^1(A_0(V), A_0(W))$. It is certainly insufficient to conclude that $H^1(A_0(V), A_0(W)) = 0$. 
\end{rema}

\subsection{The module $L_{\hat\g}(l, \lambda)$ $(l\in \Z_+)$ and $L(0)$-grading}

Let $l\in \Z_+$ and $\lambda\in \h^*$ be dominant integral, satisfying $\lambda(h_\theta) \leq l$ for the highest root $\theta$. Let $V = L_{\hat\g}(l, 0)$ and $W = L_{\hat\g}(l, \lambda)$. We will show that $H^1(V, W) = Z^1(V, W)$.

\begin{lemma}\label{lowest-weight}
\begin{enumerate}
    \item The lowest $L(0)$-weight of $L_{\hat\g}(l, \lambda)$ is nonnegative. The lowest $L(0)$-weight is zero only when $\lambda = 0$. 
    \item If $W$ is not isomorphic to $V$ and $W_{[1]} \neq 0$, then $W_{[1]}$ is the lowest weight subspace. 
\end{enumerate}
\end{lemma}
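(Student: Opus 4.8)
The plan is to reduce both parts to the Sugawara formula for the conformal weight of $L_{\hat\g}(l,\lambda)$ together with the positivity of the normalized form. Recall that the conformal vector of $V_{\hat\g}(l,0)$ is the Sugawara vector, and consequently $L(0)$ acts on the lowest weight space $M_\lambda$ of $L_{\hat\g}(l,\lambda)$ as the scalar
$$h_\lambda = \frac{\langle\lambda,\lambda+2\rho\rangle}{2(l+h^\vee)},$$
where $\rho$ is the half sum of the positive roots and $h^\vee$ is the dual Coxeter number. Moreover $L(0)$ acts semisimply on $W$ with spectrum contained in $h_\lambda + \N$, so $M_\lambda$ is exactly the lowest weight subspace $W_{[h_\lambda]}$ and each $W_{[m]}$ is an honest (not merely generalized) eigenspace. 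I would record these facts first, citing the standard affine construction reviewed above.

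For Part (1), I would first note that the denominator $2(l+h^\vee)$ is strictly positive, since $l\in\Z_+$ and $h^\vee$ is a positive integer, so the sign of $h_\lambda$ is that of the numerator. Writing $\langle\lambda,\lambda+2\rho\rangle = \langle\lambda,\lambda\rangle + 2\langle\lambda,\rho\rangle$, I would observe that $\langle\lambda,\lambda\rangle\geq 0$ because the normalized form is positive definite on the real span of the roots, with equality if and only if $\lambda=0$; and that $\langle\lambda,\rho\rangle = \tfrac12\sum_{\alpha\in\Phi_+}\langle\lambda,\alpha\rangle\geq 0$, since dominant integrality of $\lambda$ forces $\langle\lambda,\alpha\rangle\geq 0$ for every $\alpha\in\Phi_+$ (as $\langle\alpha,\alpha\rangle>0$ and $2\langle\lambda,\alpha\rangle/\langle\alpha,\alpha\rangle\in\N$). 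Hence $h_\lambda\geq 0$, and $h_\lambda=0$ forces $\langle\lambda,\lambda\rangle=0$, i.e.\ $\lambda=0$, which gives the equality clause.

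For Part (2), since $W$ is not isomorphic to $V$ we have $\lambda\neq 0$, so Part (1) yields $h_\lambda>0$. Because the $L(0)$-spectrum of $W$ lies in $h_\lambda+\N$, the hypothesis $W_{[1]}\neq 0$ means $1=h_\lambda+n$ for some $n\in\N$. Combined with $h_\lambda>0$ this gives $n=1-h_\lambda<1$, hence $n=0$ and $h_\lambda=1$. Therefore $W_{[1]}=W_{[h_\lambda]}$ is the lowest weight subspace, as claimed.

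The only genuinely delicate point is Part (1): everything hinges on recording the conformal weight correctly in the paper's normalization and on the positive definiteness of the normalized form on the real root space, after which the conclusion is a short positivity estimate. I expect no serious obstacle beyond checking that the standing hypotheses — $\lambda$ dominant integral and $l$ a positive integer — are precisely what make the numerator nonnegative (vanishing only at $\lambda=0$) and the denominator positive; the second part is then purely combinatorial once $h_\lambda>0$ is in hand.
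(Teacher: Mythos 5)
Your proposal is correct and follows essentially the same route as the paper: both rest on the Sugawara formula $h_\lambda = \bigl(\langle\lambda,\lambda\rangle + 2\langle\lambda,\rho\rangle\bigr)/2(l+h^\vee)$ for the lowest conformal weight, the nonnegativity of both pairings coming from dominant integrality (with vanishing only at $\lambda=0$), and then for Part (2) the observation that $W\not\cong V$ forces $h_\lambda>0$, so $W_{[1]}\neq 0$ pins the lowest weight at exactly $1$. The paper phrases the Part (2) step via integrality of the grading and your version via $n=1-h_\lambda<1$ in $\N$, but these are the same argument.
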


\begin{proof}
Let $\{u^{(i)}, i=1, ..., d\}$ be an orthonormal basis with respect to the normalized Killing form. Then 
$$\sum_{i=1}^d u^{(i)}\otimes u^{(i)}$$
is the Casimir element in the universal enveloping algebra $U(\g)$. It is well-known that the Casimir element acts on $M_\lambda$ by the scalar 
$$\langle \lambda, \lambda\rangle + 2\langle\lambda, \rho\rangle$$
where $\rho$ is half of the sum of the positive roots. Since $\lambda$ is dominant integral, both $\langle\lambda, \lambda\rangle$ and $\langle\lambda, \rho\rangle$ are nonnegative. The Casimir element acts by zero only when $\lambda = 0$. Let $h$ be the dual Coxeter number of $\g$. From the Sugawara construction, $L(0)$ acts on the lowest weight subspace by
$$\frac 1 {2(l+h)} \sum_{i=1}^d u^{(i)}(0) u^{(i)}(0) = \frac 1 {2(l+h)} \left(\langle \lambda, \lambda\rangle + 2\langle\lambda, \rho\rangle\right),$$
a scalar that is nonnegative. The scalar is zero only when $\lambda = 0$. This concludes (1). 

For (2), we start by noticing that if $W_{[1]}\neq 0$, then $W$ has to be $\Z$-graded. So from (1), the lowest $L(0)$-weight is nonnegative. If the lowest $L(0)$-weight is zero, then $\lambda = 0$ and $W = L_{\hat\g}(l, 0) = V$, contradicting the assumption that $W$ is not isomorphic to $V$. Thus the lowest $L(0)$-weight has to be at least 1. The conclusion follows from the assumption that $W_{[1]}\neq 0$. 
\end{proof}

\begin{thm}\label{affine}
Let $l\in \Z_+$, $V = L_{\hat\g}(l, 0)$ and $W=L_{\hat\g}(l, \lambda)$, where $\lambda$ is a dominant integral weight satisfying $\langle \lambda, h_\theta\rangle \leq l$. We take $\d_W=L(0)$. Then for every derivation $F: V\to W$, $F(v) = (w_{(1)})_0 v$ for some $w_{(1)}\in W_{[1]}$. Thus, $H^1(V, W) = Z^1(V, W)$. 
\end{thm}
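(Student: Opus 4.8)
The plan is to push everything down to the Lie-algebra level, using the generation principle (Proposition \ref{Der-gen}) together with the Whitehead lemma. Since $V=L_{\hat\g}(l,0)$ is generated by $\{a(-1)\one:a\in\g\}\cong V_{(1)}$, Proposition \ref{Der-gen} tells us that $F$ is completely determined by the values $\phi(a):=F(a(-1)\one)$, and that it suffices to produce a single $w_{(1)}\in W_{[1]}$ for which $F$ and the zero-mode derivation $v\mapsto (w_{(1)})_0 v$ agree on these generators. Because $\d_W=L(0)$ and $F$ preserves the grading, $\phi(a)\in W_{[1]}$, so $\phi$ is a linear map $\g\to W_{[1]}$, where $W_{[1]}$ carries the $\g$-action $a\cdot w=a(0)w$. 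If $W_{[1]}=0$ then $\phi=0$, $F$ vanishes on the generators, and we finish with $w_{(1)}=0$; so we may assume $W_{[1]}\neq 0$, which by Lemma \ref{lowest-weight} forces $W$ to be $\Z$-graded with lowest weight $0$ or $1$.

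The heart of the argument is to show that $\phi$ is a Lie-algebra derivation $\g\to W_{[1]}$. Applying the definition of $F$ to $a(0)b(-1)\one=[a,b](-1)\one$ gives $\phi([a,b])=a(0)\phi(b)+\phi(a)_0(b(-1)\one)$, so I must evaluate the second term exactly. Expanding $\phi(a)_0(b(-1)\one)=\Res_x e^{xL(-1)}Y_W(b(-1)\one,-x)\phi(a)$ yields $\sum_{n\ge 0}\tfrac{(-1)^{n+1}}{n!}L(-1)^n b(n)\phi(a)$; since $\phi(a)\in W_{[1]}$, the summand of index $n$ has weight $1-n$, so all terms with $n\ge 2$ vanish by the $\N$-grading. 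The $n=0$ term is $-b(0)\phi(a)$, and the only delicate point is the $n=1$ term $L(-1)(b(1)\phi(a))$, which a priori lies in $W_{[0]}$: if $W\not\cong V$ the lowest weight is $1$, so $W_{[0]}=0$, while if $W\cong V$ then $W_{[0]}=\C\one$ and $L(-1)\one=0$. In either case this term dies, leaving $\phi(a)_0(b(-1)\one)=-b(0)\phi(a)$ and hence $\phi([a,b])=a\cdot\phi(b)-b\cdot\phi(a)$.

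With $\phi$ a derivation of the simple Lie algebra $\g$, the Whitehead lemma produces $w\in W_{[1]}$ with $\phi(a)=a(0)w$ for all $a$. The same residue expansion applied to $w_0(a(-1)\one)$ gives, after the identical vanishing of the $n\ge 2$ and $n=1$ contributions, $w_0(a(-1)\one)=-a(0)w=-\phi(a)$. Setting $w_{(1)}=-w\in W_{[1]}$, the zero-mode derivation $v\mapsto (w_{(1)})_0 v$ and $F$ therefore agree on every generator $a(-1)\one$, and Proposition \ref{Der-gen}, applied to their difference, forces $F(v)=(w_{(1)})_0 v$ for all $v\in V$.

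The step I expect to be the main obstacle is obtaining the \emph{exact} identity $\phi(a)_0(b(-1)\one)=-b(0)\phi(a)$ rather than a congruence: Proposition \ref{Affine-Zhu} already delivers the derivation property modulo $O_0(W)$ at the level of Zhu's algebra, but to conclude $H^1(V,W)=Z^1(V,W)$ on the nose one must control the higher intertwining-operator modes, and this is precisely where the hypotheses $\d_W=L(0)$ and the lowest-weight information of Lemma \ref{lowest-weight} (together with $L(-1)\one=0$ in the case $W\cong V$) are indispensable. Once this exact cancellation is secured, the rest is a formal consequence of Whitehead's lemma and the generation principle.
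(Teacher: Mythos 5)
Your proof is correct, and for the main case (lowest weight $1$) it coincides with the paper's argument: the exact identity $F(a(-1)\one)_0\, b(-1)\one = -b(0)F(a(-1)\one)$ makes $a \mapsto F(a(-1)\one)$ a Lie-algebra derivation $\g \to W_{[1]}$, Whitehead's lemma produces the element $w$, and Proposition \ref{Der-gen} applied to the difference of the two derivations finishes. Where you genuinely diverge is the case $W \cong V$ (lowest weight $0$): the paper switches tools there, invoking Proposition \ref{Affine-Zhu} to get $F(v) \equiv (w_{(1)})_0 v \bmod O_0(V)$ and then using the Frenkel--Zhu description $A_0(V) = U(\g)/\langle e_\theta^{l+1}\rangle$ to see that $O_0(V)$ contains no homogeneous weight-$1$ elements, which upgrades the congruence to an equality on generators. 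You instead observe that the only term obstructing the exact identity, $L(-1)b(1)F(a(-1)\one)$, lies in $L(-1)W_{[0]}$, which vanishes both when $W \not\cong V$ (where $W_{[0]}=0$ by Lemma \ref{lowest-weight}) and when $W \cong V$ (where $W_{[0]}=\C\one$ and $L(-1)\one=0$); the same observation kills the $n=1$ term in $w_0\,a(-1)\one$, so the Lie-algebra/Whitehead argument covers both cases uniformly. Your route is more elementary and self-contained---it needs neither Proposition \ref{Affine-Zhu} nor the structure of $A_0(V)$---whereas the paper's route deliberately showcases the map $H^1(V,W)\to H^1(A_0(V),A_0(W))$ that is one of its themes (the paper itself remarks that Zhu's algebra merely provides an ``alternative proof'' of the $h=0$ case). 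One small imprecision: when you say the ``summand of index $n$ has weight $1-n$'' you mean $b(n)\phi(a)\in W_{[1-n]}$ before applying $L(-1)^n$, but the intended vanishing $W_{[1-n]}=0$ for $n\geq 2$ is exactly right, since Lemma \ref{lowest-weight} guarantees the $L(0)$-weights of $W$ are nonnegative.
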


\begin{proof}
Let $\gamma$ be the lowest $L(0)$-weight. We know from Lemma \ref{lowest-weight} that $\gamma\geq 0$. Thus it suffices to study the following cases.  
\begin{enumerate}
    \item If $\gamma\notin \Z$, then $F=0$. We simply take $w_{(1)}=0$. 
    \item If $\gamma \geq 2$, we know that $F(a(-1)\one) = 0$ for every $a\in \g$, which implies that $F=0$.
    \item If $\gamma=1$, then we have
    $$F((a(-1)\one)_0 b(-1)\one = -b(0)F(a(-1)\one).$$
    So 
    $$F(a(0)b(-1)\one) = F([a,b](-1)\one) =  a(0)F(b(-1)\one) - b(0) F(a(-1)\one). $$
    This means that the map 
    $$a \mapsto F(a(-1)\one)$$ 
    is a Lie algebra derivation from $\g$ to $W_{[1]}$. By Whitehead lemma, there exists $w_{(1)}\in W_{[1]}$ such that 
    $$F(a(-1)\one) = -a(0) w_{(1)} = (w_{(1)})_0 a(-1)\one.$$
    Thus 
    $$F(v) = (w_{(1)})_0 v$$
    is a zero-mode derivation. 
    \item If $\gamma=0$, we know from Lemma \ref{lowest-weight} that $W=V$. We know from the Proposition \ref{Affine-Zhu} that there exists $w_{(1)}\in V_{(1)}$, such that
    $$F(v) \equiv (w_{(1)})_0 v \mod O_0(V).$$
    So the map $g(v) = F(v) - (w_{(1)})_0 v$ is a derivation with image in $O_0(V)$. In particular, for every $a\in \g$, $g(a(-1)\one)$ is a homogeneous element in $O_0(V)$ of weight 1. However, from the fact that $A_0(V) = U(\g)/\langle e_\theta^{l+1}\rangle$ (see \cite{Frenkel-Zhu}), it is clear that $O_0(V)$ contains no homogeneous element of weight 1. Thus $g(a(-1)\one)= 0$ and $F(a(-1)\one) = (w_{(1)})_0 a(-1)\one$. This implies that $F(v) = (w_{(1)})_0 v$.
\end{enumerate} 
\end{proof}

\begin{rema}
The $\gamma=0$ case has also been known in \cite{HQ-Coh-reduct}. Zhu's algebra provides an alternative proof. But for $\gamma=1$ case, Zhu's algebra will not help unless we know there exists $V$-modules $W_2$ and $W_3$ of the same lowest weight, such that the fusion rule $N_{WW_2}^{W_3} \neq 0$ (cf. Remark \ref{Rmk-1-23}). This assumption usually does not hold. In case it holds, then from the conclusion of the previous proposition, there exists some $w_{(1)}\in W_{[1]}$ such that $F(a(-1)\one) - (w_{(1)})_0 a(-1)\one$ is a homogeneous element of weight $h$ in $O(W)$, which we denote by $\theta$. Assume that $\theta\neq 0$. Then since for any $a\in \g$, $a(0)\theta\in O(W)$, we know that $W_{[1]} \subseteq O(W)$ and thus $A_0(W) = 0$. But the assumption $N_{WW_2}^{W_3}\neq 0$ means $A_0(W)$ cannot be zero. So we have a contradiction. 
\end{rema}

\subsection{The module $L_{\g}(l, \lambda)$ $(l\in \Z_+)$ with $\N$-grading} In this subsection we consider $W=L_{\hat\g}(l, \lambda)$ with $\N$-grading. The main effort focuses on the canonical $\N$-grading given by $\d_W = L(0) - (\langle\lambda, \lambda\rangle + 2 \langle\lambda, \rho\rangle) / 2(l+h)$. 

One should note that with a grading different from the $L(0)$ one, the choice of derivations is different. Thus, $H^1(V, W)$ and $Z^1(V, W)$ are different from those with $L(0)$-gradings. 

\begin{thm}\label{affine-canonical-N-grading}
Let $V = L_{\hat{\g}}(l, 0)$ with $l\in \Z_+$. Let $\lambda\in \h^*$ be dominant integral satisfying $\lambda(h_\alpha)\leq l$. Let $W = L_{\hat\g}(l, \lambda)$ with the canonical $\N$-grading given by the operator $\d_W = L(0)-(\langle\lambda, \lambda\rangle + 2 \langle\lambda, \rho\rangle) / 2(l+h)$. Then $H^1(V, W) = Z^1(V, W)$. \end{thm}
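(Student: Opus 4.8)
The plan is to reduce everything to the action of $F$ on the generating subspace $V_{(1)}\cong \g$ and then solve a small Lie-algebra problem on the bottom two graded pieces of $W$. Write $\gamma = (\langle\lambda,\lambda\rangle + 2\langle\lambda,\rho\rangle)/2(l+h)$ for the lowest $L(0)$-weight, so $\d_W = L(0)-\gamma$ and $W_{[\gamma]} = M_\lambda$ is the irreducible $\g$-module of highest weight $\lambda$. I may assume $\lambda\neq 0$, since otherwise $W=V$ and the canonical grading is the $L(0)$-grading already handled in Theorem \ref{affine}; in particular $W\not\cong V$, so $L(-1)$ is injective on $W$. Because $V$ is generated by $V_{(1)}$, Proposition \ref{Der-gen} shows $F$ is determined by the linear map $\phi:\g\to W_{[\gamma+1]}$, $\phi(a)=F(a(-1)\one)$ (the condition $F\circ L(0)=\d_W\circ F$ forces $\phi(a)$ to have $\d_W$-weight $1$, i.e.\ $L(0)$-weight $\gamma+1$). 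Every zero-mode derivation is $(w)_0$ for $w\in W_{[\gamma+1]}$, and a difference of derivations is a derivation; so it suffices to show that after subtracting suitable zero-modes I reach $\phi=0$, whence $F\in Z^1(V,W)$.

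First I would extract the constraints on $\phi$. Expanding $F(Y(a(-1)\one,x)(b(-1)\one))$ via the derivation axiom and reading off the coefficients of $x^{-1}$ and $x^{-2}$, using $a(0)(b(-1)\one)=[a,b](-1)\one$, $a(1)(b(-1)\one)=l\langle a,b\rangle\one$, the mode formula for $w_0,w_1$, and $W_{[\gamma-1]}=0$, gives
\begin{align}
\phi([a,b]) &= a(0)\phi(b) - b(0)\phi(a) + L(-1)b(1)\phi(a), \label{eq:phi-constr-proposal}\\
0 &= a(1)\phi(b) + b(1)\phi(a).
\end{align}
The second identity is automatic (it is the symmetrization of the first), so the driver is \eqref{eq:phi-constr-proposal}. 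The key structural move is to pass to the quotient $Q=W_{[\gamma+1]}/L(-1)M_\lambda$, a $\g$-module on which the troublesome term $L(-1)b(1)\phi(a)$ vanishes (since $b(1)\phi(a)\in W_{[\gamma]}=M_\lambda$). Thus the induced $\bar\phi:\g\to Q$ is an honest Lie-algebra derivation, so by the Whitehead lemma it is inner, $\bar\phi(a)=a(0)\bar q$. Since a zero-mode restricts to $\phi_w(a)=-a(0)w+L(-1)a(1)w\equiv -a(0)w \pmod{L(-1)M_\lambda}$, choosing $w$ to lift $-\bar q$ gives $\bar\phi=\bar\phi_w$; subtracting this zero-mode I may assume $\phi(\g)\subseteq L(-1)M_\lambda$. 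Writing $\phi(a)=L(-1)\xi(a)$ with $\xi(a)\in M_\lambda$ (well defined by injectivity of $L(-1)$) and feeding this into \eqref{eq:phi-constr-proposal}, using $b(1)L(-1)m=b(0)m$, I obtain $\xi([a,b])=a(0)\xi(b)$, which says exactly that $\xi$ is a homomorphism of $\g$-modules from the adjoint module to $M_\lambda$.

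Now Schur's lemma finishes the generic case: $\Hom_\g(\g,M_\lambda)=0$ unless $M_\lambda$ is the adjoint module, forcing $\xi=0$, hence $\phi=0$ and $F\in Z^1(V,W)$. The one remaining case is $M_\lambda\cong \g$ (i.e.\ $\lambda=\theta$, which requires $l\geq 2$), where $\xi=c\cdot\iota$ for the identification $\iota:\g\xrightarrow{\sim}W_{[\gamma]}$ and a scalar $c$; here $\phi$ need not vanish and I must instead exhibit it as a zero-mode. The plan is to test the $\g$-invariant element $w=\sum_i u^{(i)}(-1)\iota(u^{(i)})\in W_{[\gamma+1]}$, summed over an orthonormal basis of $\g$: a direct computation gives $a(0)w=0$ and $a(1)w=(2h+l)\iota(a)$, with the adjoint Casimir contributing $2h$ and the central term $l$, so $\phi_w(a)=(2h+l)L(-1)\iota(a)$. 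Since $2h+l\neq 0$, rescaling $w$ matches $\phi$, and again $F\in Z^1(V,W)$. I expect this exceptional adjoint case to be the main obstacle: the Whitehead--Schur mechanism leaves a one-dimensional ambiguity, and closing it requires recognizing the leftover derivation as the zero-mode of the canonical quadratic invariant together with the nonvanishing of $2h+l$.
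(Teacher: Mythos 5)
Your proposal is correct and follows essentially the same route as the paper's proof: reduce to the generating subspace $V_{(1)}\cong\g$, pass to the quotient $W_{[\gamma+1]}/L(-1)W_{[\gamma]}$ and apply the Whitehead lemma, use Schur's lemma to force the remaining $\g$-module map $\xi$ to vanish unless $M_\lambda$ is the adjoint module, and in that exceptional case realize the leftover derivation as the zero-mode of the Casimir-twisted weight-one element, closing with $2h+l\neq 0$. The only (cosmetic) difference is that you verify $a(0)w=0$ and $a(1)w=(2h+l)\iota(a)$ abstractly via invariance of the Casimir tensor in an orthonormal basis, whereas the paper carries out the same verification in a Chevalley basis with an explicit root-string computation.
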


\begin{proof}
If $\lambda = 0$ then $W = V$. The canonical $\N$-grading coincides with the $L(0)$-grading and thus has been taken care of in Theorem \ref{affine}. Thus we focus on the case $\lambda\neq 0$. 

Let $F: V\to W$ be a derivation. Then for every $a, b\in \g$, 
\begin{align*}
    F([a,b](-1)\one) &=F(a(0) b(-1)\one) = a(0) F(b(-1)\one) + F(a(-1)\one)_0 b(-1)\one\\
    &= a(0) F(b(-1)\one)-b(0)F(a(-1)\one) + L(-1)b(1)F(a(-1)\one)\\
    &\in a(0) F(b(-1)\one)-b(0)F(a(-1)\one) + L(-1)W_{[0]}.
\end{align*}
This motivates us to consider the space $W_{[1]} / L(-1)W_{[0]}$. 

View $W_{[0]}$ and $W_{[1]}$ as $\g$-modules, where the actions are given by $g(0), g\in \g$. We first show that $L(-1): W_{[0]}\to W_{[1]}$ is an injective homomorphism of $\g$-modules. Indeed, it follows from $L(-1)$-derivative property that $(L(-1)v)_0 = 0$ for every $v\in V$. It follows from $L(-1)$-commutator property that
\begin{align*}
    L(-1)g(0) &= g(0)L(-1) + [L(-1), g(0)] \\
        &= g(0)L(-1) + (L(-1)g(-1)\one)_0 =g(0)L(-1).
\end{align*}
Thus $L(-1)$ a homomorphism. Since $\lambda\neq 0$, $W$ is not $L(0)$-isomorphic to $V$ and thus has no vacuum-like vectors. Thus $\ker L(-1)= 0$ and $L(-1)$ is injective. 

Thus, $W_{[1]}/L(-1)W_{[0]}$ is a $\g$-module. Consider the restriction of $F$ on $V_{(1)} \simeq \g$, whose image is then in $W_{[1]}$. Let $\bar F: \g \to W_{[1]}/L(-1)W_{[0]}$ be the composition of the restriction and the canonical projection. Then $\bar F$ satisfies
$$\bar F([a,b](-1)\one) = a(0) \bar F(b(-1)\one) - b(0) \bar F(a(-1)\one) $$
and thus forms a Lie algebra derivation. Whitehead lemma implies that
\begin{align*}
    \bar F(a(-1)\one) & = F(a(-1)\one) + L(-1)W_{[0]} \\
    &=  a(0) (w_{[1]}+L(-1)W_{[0]}) + L(-1)W_{[0]} \in W_{[1]}/ L(-1)W_{[0]}. 
\end{align*}
for some $w_{[1]}\in W_{[1]}$. Since $a(0)$ commutes with $L(-1)$, we thus have
$$F(a(-1)\one) \in a(0) w_{[1]} + L(-1) W_{[0
]}.$$
Consider now the map $F_1: V\to W$ defined by 
$$F_1(v) = F(v) + (w_{[1]})_0 v. $$
Notice that $(w_{[1]})_0 a(-1) = -a(0) w_{[1]} + L(-1)a(1) w_{[1]} \in -a(0)w_{[1]} + L(-1)W_{[0]}$, thus 
$$F_1(a(-1)\one) = F(a(-1)\one) - a(0) w_{[1]} + L(-1)a(1)w_{[1]}\in L(-1)W_{[0]}. $$
This in particular implies that $F_1(a(-1)\one)_0 = 0$ from $L(-1)$-derivative property. Thus for $a,b\in \g$, we have
$$F_1(a(0) b(-1)\one) = a(0) F_1(b(-1)\one),$$
which means that $F_1$ is a $\g$-module homomorphism from $\g$ to $L(-1)W_{[0]}$. 

However, since $L(-1)$ is an injective homomorphism, $L(-1)W_{[0]}$ is isomorphic to $W_{[0]}$ which is $M_\lambda$. From Schur's lemma, if $M_\lambda$ is not isomorphic to the adjoint $\g$-module, the the map $a(-1)\one\mapsto F_1(a(-1)\one)$ is zero. So $F_1$ is a derivation sending every generator of $V$ to zero. Thus $F_1(v) = 0$ and $F(v) + (w_{[1]})_0 v = 0$ for every $v\in $V. Therefore, $F$ is a zero-mode derivation. 

It remains to consider the case when $M_\lambda\simeq \g$. In this case, let $\psi: \g \to M_\lambda$ be a $\g$-module isomorphism (unique up to a scalar). Then for every $a\in \g$, we have
$$F_1(a(-1)\one) = L(-1)\psi(a).$$
We will proceed to show that $F_1$ is also a zero-mode derivation. 

For each $\alpha\in \Phi$, let $t_\alpha$ be an element in $\h$ satisfying
$$\alpha(h) = \langle t_\alpha, h\rangle, h\in \h. $$
Also pick $\{e_\alpha: \alpha\in \Phi\}$ to form a Chevalley basis, i.e., each $e_\alpha$ is a root vector in $\g_\alpha$; 
\begin{align*}
    [e_\alpha, e_{-\alpha}] & = \langle e_\alpha, e_{-\alpha}\rangle t_\alpha = \frac{2}{\langle \alpha, \alpha\rangle} t_\alpha; 
\end{align*}
and for $\alpha, \beta\in \Phi$ with $\alpha+\beta\neq 0$, 
\begin{align*}
    [e_\alpha, e_\beta] &= c_{\alpha\beta} e_{\alpha+\beta}, 
\end{align*}
where the coefficients $c_{\alpha\beta}$ satisfies $c_{\alpha\beta} = -c_{\beta\alpha} = -c_{-\alpha, -\beta}$, together with the following property: if $\alpha, \beta, \gamma\in \Phi$ satisfies $\alpha+\beta+\gamma=0$, then 
\begin{align}
    \frac{c_{\alpha\beta}}{\langle \gamma, \gamma\rangle} = \frac{c_{\beta\gamma}}{\langle \alpha, \alpha\rangle} = \frac{c_{\gamma\alpha}}{\langle \beta, \beta\rangle}\label{cab-formula}
\end{align}
(See \cite{Hum} and \cite{Sam} for details). Let $\alpha_1, ..., \alpha_r\in \Phi^+$ be the simple positive roots. We denote the elements $t_{\alpha_i}$ simply by $t_i$. Let $t_1^\vee, ..., t_r^\vee \in \h$ satisfying $\langle t_i, t_j^\vee\rangle = \alpha_i(t_j^\vee)= \delta_{ij}, i, j= 1, ..., r$. Consider now the element $w_{(1)}\in W_{[1]}$ of the following form 
$$w_{(1)} = \sum_{i=1}^r t_i(-1)\psi(t_i^\vee) + \sum_{\alpha\in \Phi} \frac{\langle \alpha, \alpha\rangle}{2} e_\alpha(-1)\psi(e_{-\alpha}). $$
Informally, $w_{(1)}$ is the Casimir element
$$\Omega = \sum_{i=1}^r t_i t_i^\vee + \sum_{\alpha\in \Phi} \frac{\langle \alpha, \alpha\rangle}{2}e_\alpha e_{-\alpha}$$
twisted by $\psi$ and the negative-one-mode. 

We now show that $w_{(1)}$ spans a trivial $\g$-submodule of $W_{[1]}$. We first note that for every $h\in \h$, 
\begin{align*}
    h(0)w_{(1)} &= \sum_{i=1}^r [h,t_i](-1)\psi(t_i^\vee)+t_i(-1)\psi([h,t_i^\vee]) \\
    & \quad+ \sum_{\alpha\in \Phi}\frac{\langle\alpha,\alpha\rangle}2 \left([h,e_\alpha](-1)\psi(e_{-\alpha})+ e_\alpha(-1) \psi([h,e_{-\alpha}]\right)\\
    &= 0+\sum_{\alpha\in \Phi}\frac{\langle\alpha,\alpha\rangle}2 \left(\alpha(h)e_\alpha(-1)\psi(e_{-\alpha})-\alpha(h) e_\alpha(-1) \psi(e_{-\alpha}\right)=0. 
\end{align*}
Thus $w_{(1)}$ is of weight zero. 

For every $j=1, ..., r$, 
\begin{align}
    e_{\alpha_j}(0)w_{(1)} &= \sum_{i=1}^r [e_{\alpha_j},t_i](-1)\psi(t_i^\vee)+t_i(-1)\psi([e_{\alpha_j},t_i^\vee]) \label{Line1}\\
    & \quad+ \sum_{\alpha\in \Phi}\frac{\langle\alpha,\alpha\rangle}2 \left([e_{\alpha_j},e_\alpha](-1)\psi(e_{-\alpha})+ e_\alpha(-1) \psi([e_{\alpha_j},e_{-\alpha}]\right)\label{Line2}
\end{align}
By $[e_{\alpha}, t_\beta] = -\alpha(t_\beta)e_\alpha = -\langle t_\alpha, t_\beta\rangle e_\alpha$, and $h = \sum_{i=1}^r \langle h, t_i\rangle t_i^\vee = \sum_{i=1}^r \langle h, t_i^\vee\rangle t_i$, (\ref{Line1}) can be simplified as
$$ -e_{\alpha_j}(-1)\psi(t_j) - t_j(-1) \psi(e_{\alpha_j}), $$
which cancels out with the $\alpha = -\alpha_j$ summand in (\ref{Line2}). 

For other summands in (\ref{Line2}), we separate $\Phi$ into a disjoint union of $\alpha_j$-strings and show that the summation along each $\alpha_j$-string yields zero. Fix some $\alpha_j$-string consisting of positive roots and let $\beta$ be lowest element, so that the $\alpha_j$-string is of the form 
\begin{align}
    \beta, \beta+\alpha_j, ..., \beta+q\alpha_j \label{3-8-summand-index}
\end{align}
is the $\alpha_j$-string. We start with the summand corresponding to $\beta$, namely
\begin{align}
    \frac{\langle \beta, \beta\rangle}2 c_{\alpha_j\beta}e_{\beta+\alpha_j}(-1)\psi(e_{-\beta}) \label{3-8-line-1}
\end{align}
(second half is zero because $[e_{\alpha_j}, e_{-\beta}]=0$) together with the summand corresponding to $\beta+\alpha_j$ 
\begin{align}
    & \frac{\langle \beta+\alpha_j, \beta+\alpha_j\rangle}2 c_{\alpha_j, \beta+\alpha_j}e_{\beta+2\alpha_j}(-1)\psi(e_{-\beta-\alpha_j}) \label{3-8-line-2}\\
    & + \frac{\langle \beta+\alpha_j, \beta+\alpha_j\rangle}2  c_{\alpha_j, -\beta-\alpha_j}e_{\beta+\alpha_j}(-1) \psi(e_{-\beta}). \label{3-8-line-3}
\end{align}
From Formula (\ref{cab-formula}) and skew-symmetry of $c_{\alpha\beta}$, we have 
$$ \frac{c_{\alpha_j\beta}}{\langle \alpha_j+\beta, \alpha_j+\beta\rangle} = \frac{c_{-\beta-\alpha_j, \alpha_j}}{\langle \beta, \beta\rangle} = -\frac{c_{\alpha_j, -\beta-\alpha_j}}{\langle \beta, \beta\rangle} $$
Thus(\ref{3-8-line-1}) and (\ref{3-8-line-3}) add up to zero. So the summation of $\alpha=\beta$ summand and $\alpha=\beta+\alpha_j$ summand is simply (\ref{3-8-line-2}). If we further add up with the summand assoicated with $\alpha=\beta+2\alpha_j$, namely, 
\begin{align}
    & \frac{\langle\beta+2\alpha_j,\beta+2\alpha_j\rangle}2 c_{\alpha_j, \beta+2\alpha_j} e_{\beta+3\alpha_j}(-1)\psi(e_{-\beta-2\alpha_j})\label{3-8-line-4}\\
    & + \frac{\langle\beta+2\alpha_j,\beta+2\alpha_j\rangle}2 c_{\alpha_j, -\beta-2\alpha_j}e_{\beta+2\alpha_j}(-1) \psi(e_{-\beta-\alpha_j})\label{3-8-line-5}
\end{align}
the sum is simply (\ref{3-8-line-4}) since (\ref{3-8-line-3}) and (\ref{3-8-line-5}) add up to zero. So what we have is a telescoping sum. Repeating the process with summands assoicated with $\beta+3\alpha_j$, ..., $\beta+(q-1)\alpha_j$, we see that the sum is precisely  
\begin{align}
    & \frac{\langle\beta+(q-1)\alpha_j,\beta+(q-1)\alpha_j\rangle}2 c_{\alpha_j, \beta+(q-1)\alpha_j} e_{\beta+q\alpha_j}(-1)\psi(e_{-\beta-(q-1)\alpha_j})\label{3-8-line-6}. 
\end{align}
The last summand associated with $\beta+q\alpha_j$ only has one term
\begin{align}
        & \frac{\langle\beta+q\alpha_j,\beta+q\alpha_j\rangle}2 c_{\alpha_j, -\beta-q\alpha_j}e_{\beta+q\alpha_j}(-1) \psi(e_{-\beta-(q-1)\alpha_j})\label{3-8-line-7}
\end{align}
since $\beta+(q+1)\alpha_j\notin\Phi$. By the same relation, (\ref{3-8-line-6}) and (\ref{3-8-line-7}) add up to zero. So the partial sum from (\ref{Line2}) along every $\alpha_j$-string is zero. Thus we showed that for every positive simple root $\alpha_i$, $e_{\alpha_i}(0)w_{(1)} = 0$. This means that $w_{(1)}$ is also a highest weight vector. Thus it generates a trivial $\g$-module. 

Now we can compute the zero-mode derivation defined by $w_{(1)}$: for any $a\in \g$, 
\begin{align*}
    (w_{(1)})_0 a(-1)\one = -a(0) w_{(1)} + L(-1)a(1)w_{(1)}. 
\end{align*}
We have seen above that first term is zero. To compute the second term, we first notice that
\begin{align*}
    a(1)w_{(1)} &= \sum_{i=1}^r a(1) t_i(-1)\psi(t_i^\vee) + \sum_{\alpha\in \Phi} \frac{\langle \alpha, \alpha\rangle}{2}a(1)e_\alpha(-1)\psi(e_{-\alpha})\\
    &= \sum_{i=1}^r ([a,t_i](0) + l\langle a, t_i\rangle )\psi(t_i^\vee) + \sum_{\alpha\in \Phi} \frac{\langle \alpha, \alpha\rangle}{2}([a, e_\alpha](0)+l\langle a, e_\alpha\rangle)\psi(e_{-\alpha})\\
    &= \sum_{i=1}^r \psi([[a,t_i], t_i^\vee]) + \sum_{\alpha\in \Phi} \frac{\langle \alpha, \alpha\rangle}{2} \psi([[a,e_\alpha], e_{-\alpha}]) \\
    &\quad + l\psi\left(\sum_{i=1}^r\langle a, t_i\rangle t_i^\vee + \sum_{\alpha\in \Phi} \langle a, e_\alpha\rangle \frac{\langle \alpha, \alpha\rangle} 2 e_{-\alpha}\right)\\
    &= (2h+l)\psi(a). 
\end{align*}
Thus, 
$$(w_{(1)})_0 a(-1)\one = L(-1)a(1)w_{(1)} = (2h+l)L(-1)\psi(a). $$ 
If we let 
$$F_2(v) = F_1(v) - \left(\frac 1 {2h+l}w_{(1)}\right)_0 v, $$ then $F_2(a(-1)\one) = 0$ for every $a\in \g$. This shows that $F_2(v)=0$ for every $v\in V$. This is to say that for every $v\in V$, 
$$0 = F(v) + (w_{[1]})_0 v - \left(\frac{1}{2h+l}w_{(1)}\right)_0 v.$$
Therefore, $F$ is a zero-mode derivation. 
\end{proof}

\begin{thm}
Let $V = L_{\hat{\g}}(l, 0)$ with $l\in \Z_+$. Let $W = L_{\hat\g}(l, \lambda)$ with arbitrary $\N$-grading. Then $H^1(V, W) = Z^1(V, W)$. \end{thm}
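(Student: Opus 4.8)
The final statement concerns $W = L_{\hat\g}(l,\lambda)$ equipped with an \emph{arbitrary} $\N$-grading $\d_W$, rather than the canonical one of Theorem \ref{affine-canonical-N-grading}. The plan is to reduce this to the cases already treated by understanding how an arbitrary $\N$-grading differs from the $L(0)$-grading. Since $W$ is irreducible, its $L(0)$-spectrum lies in $\gamma + \N$ where $\gamma = (\langle\lambda,\lambda\rangle + 2\langle\lambda,\rho\rangle)/2(l+h)$ is the lowest $L(0)$-weight. Any $\N$-grading operator $\d_W$ must be constant on each $L(0)$-generalized-eigenspace (being a grading compatible with the module structure), so $\d_W = L(0)_s + \alpha$ on $W$ for some constant $\alpha$; that is, $\d_W$ differs from the canonical $\N$-grading only by an overall shift by a constant. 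So the first observation to record is that an arbitrary $\N$-grading is of the form $\d_W = L(0) - \gamma + \alpha$ for a single $\alpha \in \C$ with $\alpha - \gamma \in$ (the set making the spectrum $\subseteq \N$).

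\medskip

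First I would analyze which weight-$1$ elements exist. A derivation $F\colon V\to W$ must satisfy $F\circ L(0) = \d_W\circ F$, so the image of the weight-$1$ space $V_{(1)}\simeq \g$ lands in the $\d_W$-weight-$1$ eigenspace of $W$, i.e.\ in the $L(0)$-weight-$(1-\alpha+\gamma)$ space. The key dichotomy is whether $1 - \alpha + \gamma$ equals the lowest $L(0)$-weight $\gamma$ (equivalently $\alpha = 1$, the canonical-grading-shifted-by-one case) or is strictly larger. When $\alpha \neq 1$, the relevant $L(0)$-weight space is $W_{[\gamma + n]}$ for some $n = 1-\alpha+\gamma-\gamma \in \N$; if this is not the lowest space ($n\geq 1$), I expect to use the structure of $W$ as an induced module to show the generators must map into an $L(-1)$-image, run the same Whitehead-lemma argument as in Theorem \ref{affine-canonical-N-grading} on the quotient $\g$-module, and conclude. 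When $\alpha = 1$ the $\d_W$-weight-$1$ space is exactly the lowest $L(0)$-weight space $W_{[\gamma]}\simeq M_\lambda$, and here the argument is essentially that of Theorem \ref{affine}(3) combined with Whitehead's lemma: the map $a\mapsto F(a(-1)\one)$ is a Lie-algebra derivation $\g\to M_\lambda$, hence inner, producing the zero-mode derivation.

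\medskip

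The cleanest route is to invoke Proposition \ref{Der-gen}: a derivation is determined by its values on the generators $a(-1)\one$, $a\in\g$. So in every case it suffices to produce a weight-$1$ element $w\in W_{[\cdot]}$ with $\d_W$-weight $1$ such that $F(a(-1)\one)=w_0 a(-1)\one$ for all $a$, then subtract the corresponding zero-mode derivation and show the difference kills all generators, forcing it to be zero by Proposition \ref{Der-gen}. The construction of the correct $w$ is exactly the Casimir-twisted-by-$\psi$ element built in the proof of Theorem \ref{affine-canonical-N-grading} in the exceptional case $M_\lambda\simeq\g$, and the simpler Whitehead output otherwise; the computation $(w_{(1)})_0 a(-1)\one = (2h+l)\psi(a)$ carries over unchanged since it only uses the $L(-1)$-derivative property and the bracket relations, not the specific grading.

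\medskip

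The main obstacle I anticipate is handling the shift $\alpha$ uniformly, in particular ruling out the possibility that a nonzero derivation exists whose generator-images live in a \emph{higher} $L(0)$-weight space $W_{[\gamma+n]}$ with $n\geq 1$ but for which no weight-$1$ (in the $\d_W$-sense) zero-mode element reproduces them. I would control this by examining the $\g$-module structure of the spaces $W_{[\gamma+n]}/L(-1)W_{[\gamma+n-1]}$: the derivation condition forces the induced map $\g\to W_{[\gamma+n]}/L(-1)W_{[\gamma+n-1]}$ to be a Lie-algebra derivation, hence inner by Whitehead, and I must then verify that an inner derivation into this quotient is realized by a genuine zero-mode derivation $w_0$ with $w$ of the correct $\d_W$-weight. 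Checking that the subtraction $F_1(v)=F(v)+w_0 v$ sends every generator into $L(-1)W_{[\gamma+n-1]}$ and then iterating (or invoking injectivity of $L(-1)$ together with Schur's lemma on $M_\lambda$ exactly as in Theorem \ref{affine-canonical-N-grading}) is where the bookkeeping is heaviest, but no genuinely new idea beyond those two theorems should be required.
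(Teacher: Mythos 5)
Your proposal is correct and takes essentially the same route as the paper's own proof, which likewise observes that an arbitrary $\N$-grading on the irreducible module $W$ is a constant shift $\d_W = L(0)_s + \alpha$ of the canonical one and then cites Theorem \ref{affine-canonical-N-grading} for the shift-$0$ case and the arguments of Parts (2) and (3) of Theorem \ref{affine} (Whitehead's lemma applied to the lowest weight space) for the remaining shifts. The only point you leave unstated is the trivial case $\alpha \geq 2$, where the $\d_W$-weight-$1$ space vanishes, so any derivation kills the generators $a(-1)\one$ and is zero by Proposition \ref{Der-gen}.
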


\begin{proof}
Let $\Xi\in \N$ be the lowest weight of $L_{\hat\g}(l, \lambda)$. The $\Xi=0$ case is proved in Theorem \ref{affine-canonical-N-grading}. The remaining cases can be similarly handled as in Part (2) and (3) of Theorem \ref{affine} and shall not be repeated here. 
\end{proof}

\begin{rema}\label{strictly-less-1}
When $\lambda=0$ and the lowest weight of $W=L_{\hat\g}(l, 0)$ is 1, it is clear that $Z^1(V, W) = 0$ (since the lowest weight vector is vacuum-like whose zero-mode is zero). So $\dim Z^1(V, W) < W_{[1]}/L(-1)W_{[0]}$.
\end{rema}


\section{First cohomologies of Virasoro VOAs}\label{Section-3}

In this section, we will study the first cohomology of the Virasoro VOA corresponding to the minimal models. 

\subsection{Virasoro VOA and modules} The Virasoro VOA is first constructed in \cite{Frenkel-Zhu}. Here we give a brief review following \cite{LL}. 

Let $Vir$ be the Virasoro algebra, i.e., $Vir = 
\bigoplus_{n\in \Z} \C L_n \oplus \C \textbf{c}$, with 
$$[c, Vir] = 0; [L_m, L_n] = (m-n)L_{m+n}+ \delta_{m+n, 0} \frac{m^3 - m}{12}\textbf{c}.$$
For $c, h\in \C$, let $\C \one_{c, h}$ be a one-dimensional vector space, on which $L_m$ acts trivially for every $m>0$, $L_0$ acts by the scalar $h$, and $\textbf{c}$ acts by the scalar $c$. Let $M(c, h)$ be the induced $Vir$-module from the $(\bigoplus_{n\geq 0} \C L_n \oplus \C \textbf{c})$-module $\C \one_{c, h}$. 

It follows from Section 5.5 and 6.1 of \cite{LL} that
\begin{enumerate}
    \item $M(c, 0)$ has a quotient $V(c, 0)$ that forms a VOA, on which $L(-1)\one_{c, 0} = 0$. We will abbreviate $\one_{c, 0}$ as $\one$. 
    \item $M(c, h)$ has a quotient $L(c, h)$ that forms an irreducible $V(c, 0)$-module.
    \item $V(c, 0)$ has a unique simple quotient $L(c, 0)$.  
\end{enumerate}

It follows from \cite{Wang} and \cite{DLM-Regular} that
\begin{enumerate}
    \setcounter{enumi}{3}
    \item If \begin{align}
        c= c_{p,q}= 1 - \frac{6(p-q)^2}{pq}\label{Formula-5}
    \end{align}
    for some integers $p, q\geq 2$ mutually prime, then there are only finitely many irreducible $L(c, 0)$-modules. 
    \item If $c = c_{p,q}$ as in (\ref{Formula-5}), then every irreducible $L(c, 0)$-module is isomorphic to $L(c, h)$ where 
    \begin{align}
        h = h_{m,n} = \frac{(np-mq)^2-(p-q)^2}{4pq} \label{Formula-6}
    \end{align}
    for some integers $m,n\in \Z_+$ such that $m<p$ and $n<q$. 
    \item If $c = c_{p,q}$ as in (\ref{Formula-5}), then every weak $L(c, 0)$-module is a direct sum of irreducible $L(c, 0)$-modules.
\end{enumerate}


\begin{rema}
Let $V = L(c, 0)$ for some $c=c_{p,q}$ as in (\ref{Formula-5}), $W= L(c, h)$ for some $h= h_{m,n}$ as in (\ref{Formula-6}). Since $A_0(V)$ is semisimple, for every derivation $F: V\to W$, there exists $w\in W$, such that
\begin{align}
    F(\omega) &\equiv \omega*w - w*\omega \nonumber\\
    &= (L(0)+L(-1))w \label{Rema-4-2-1}\\
    &= -\alpha w \mod O_0(W)  \label{Rema-4-2-2} 
\end{align}
where (\ref{Rema-4-2-1}) follows from Lemma \ref{HY-Lemma} (\ref{HY-Lemma-2}); (\ref{Rema-4-2-2}) follows from $\d_W = L(0)+\alpha$ and $(\d_W + L(-1))w\in O_0(W)$. In case $W$ is graded by $L(0)$, we have 
$$F(\omega)\in O_0(W).$$
However, this fact is not useful when $W
\neq V$, simply because $A_0(W)\neq 0$ only in some very limited cases. It can provide some but very minor simplifications in the computations of $H^1(V, V)$, as will be seen in the discussions below. 
\end{rema}

\subsection{Irreducible module with nonnegative lowest weight}

\begin{thm}\label{Vir-pos-energy}
Let $V = L(c, 0)$ for some $c=c_{p,q}$ as in (\ref{Formula-5}), $W= L(c, h)$ for some $h= h_{m,n}$ as in (\ref{Formula-6}) with $\d_W = L(0)$. Then $H^1(V, W) = 0$ for $h\geq 0$. 
\end{thm}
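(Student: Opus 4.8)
The plan is to use that $V=L(c,0)$ is generated by the conformal vector $\omega$, so that by Proposition \ref{Der-gen} any derivation $F\colon V\to W$ is determined by the single image $w:=F(\omega)$, and $F=0$ as soon as $w=0$. Since $\d_W=L(0)$ in this positive-energy setting, the first axiom of a derivation forces $F$ to preserve $L(0)$-weight, so $w\in W_{[2]}$. As every weight of $W=L(c,h)$ lies in $h+\N$ with $h\geq 0$, the space $W_{[2]}$ vanishes unless $h\in\{0,1,2\}$; in all other cases $w=0$ and $F=0$ with nothing to prove. The whole theorem therefore reduces to these three values of $h$.

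I would next eliminate $h=1$ using the classification of the spectrum. If $h=h_{m,n}=1$, then (\ref{Formula-6}) gives $(np-mq)^2=(p-q)^2+4pq=(p+q)^2$, hence $np-mq=\pm(p+q)$, i.e.\ $(n-1)p=(m+1)q$ or $(n+1)p=(m-1)q$. Using $\gcd(p,q)=1$ together with the ranges $1\leq m\leq p-1$ and $1\leq n\leq q-1$, one checks that neither equation admits a solution, so $h=1$ does not occur among the irreducible modules of a minimal model. This leaves only $h=0$, where $W=V$ and $W_{[2]}=\C\omega$, and $h=2$, where $W_{[2]}=\C u_h$ is the one-dimensional lowest weight space spanned by a lowest weight vector $u_h$.

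The key step is to extract a constraint on $w$ from the vacuum relation $\omega_1\omega=L(0)\omega=2\omega$ in $V$ (recall $\omega_m=L(m-1)$). Applying the derivation property gives $2w=F(\omega_1\omega)=L(0)w+w_1\omega=2w+w_1\omega$, so $w_1\omega=0$. Expanding the right-action mode by the formula from the Notation, $w_1\omega=\sum_{j\geq 0}\frac{(-1)^{j}}{j!}L(-1)^jL(j)w=L(0)w-L(-1)L(1)w+\tfrac12 L(-1)^2L(2)w-\cdots$, and substituting $L(0)w=2w$, the condition becomes $2w=L(-1)L(1)w-\tfrac12 L(-1)^2L(2)w+\cdots$. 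When $h=2$ the lowest weight conditions $L(n)u_h=0$ for $n\geq 1$ annihilate every term on the right, forcing $w=0$. When $h=0$ we have $L(1)w=0$ (since $W_{[1]}=0$) and $L(2)w=\tfrac{c}{2}\beta\one$ for $w=\beta\omega$, so the right-hand side equals $-\tfrac{\beta c}{4}L(-1)^2\one$, which vanishes because $L(-1)\one=0$ in the Virasoro VOA; again $w=0$.

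In both surviving cases $F(\omega)=0$, whence $F=0$ by Proposition \ref{Der-gen} and $H^1(V,W)=0$. I expect the only real obstacle to be conceptual: the constraint $w_1\omega=0$ becomes vacuous precisely when $W_{[1]}\neq 0$, that is at $h=1$, where $w=\beta L(-1)u_h$ satisfies $w_1\omega=0$ identically and the Virasoro relations impose no condition on $\beta$. It is thus genuinely important that the minimal-model hypothesis removes $h=1$ from the spectrum; were $h=1$ present one would instead have to invoke the vacuum singular vector at level $(p-1)(q-1)$ to force $\beta=0$.
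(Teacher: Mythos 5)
Your proof is correct, and its skeleton coincides with the paper's: reduce everything to $w=F(\omega)$ via generation by $\omega$ (Proposition \ref{Der-gen}), use weight-preservation to force $w\in W_{[2]}$ and hence $h\in\{0,1,2\}$, and eliminate $h=1$ by exactly the same factorization $(np-mq)^2=(p+q)^2$ and coprimality argument. Where you differ is in how the two surviving cases are closed, and your variant is worth comparing. For $h=2$ the paper derives $F(\omega)_0\omega=0$ from the commutation $F\circ L(-1)=L(-1)\circ F$, expands to get $L(-1)F(\omega)=0$, and then must invoke Li's theorem that an irreducible module not isomorphic to $V$ has no vacuum-like vectors to conclude $\ker L(-1)=0$; you instead apply $F$ to the relation $\omega_1\omega=2\omega$ to get $w_1\omega=0$, whose expansion $\sum_{j\geq 0}\frac{(-1)^j}{j!}L(-1)^jL(j)w=0$ collapses to $2w=0$ on the lowest weight space, needing no injectivity of $L(-1)$ and no external input. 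For $h=0$ the paper's main proof goes through Zhu's algebra, arguing $\omega\notin O(V)$ from $A(V)=\C[x]/(G_{p,q}(x))$, whereas your direct computation (using $L(-1)\one=0$ to kill the $L(2)$ term) is essentially the elementary alternative the paper itself records in the remark immediately following the theorem. So your single identity $w_1\omega=0$ handles both cases uniformly and more economically; what the paper's route buys is an illustration of the machinery it develops elsewhere (the map to $H^1(A_0(V),A_0(W))$ and Li's vacuum-like vector theorem). Your closing observation that the constraint $w_1\omega=0$ is vacuous exactly at $h=1$ is also accurate and is precisely the phenomenon the paper isolates in Remark \ref{strictly-less-2} and Proposition \ref{L(-1)w-der}.
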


\begin{proof}
If $h\notin \Z$, then $F(\omega)=0$. If $h>2$, then since $F(\omega)$ is of weight 2, it is necessary that $F(\omega) = 0$ We show that $h=1$ is impossible when $c=c_{pq}$. In fact, if $h=1$, then 
\begin{align*}
    (np-mq)^2 - (p-q)^2 = 4pq & \Rightarrow (np-mq - p-q)(np-mq+p+q)=0\\
    & \Rightarrow (n-1)p = (m+1)q \text{ or } (n+1)p = (m-1)q
\end{align*}
where $m,n,p,q$ are integers  satisfying $p,q>0, (p, q) = 1, 0<m<p, 0<n < q$. Since $(p,q)=1$, it is necessary that $p|m+1$ or $p|m-1$, neither of which is possible. 

Thus the only remaining cases are $h=2$ and $h=0$. 
\begin{enumerate}
    \item If $h=2$, then $F(\omega)$ is of lowest weight 2. It follows from $F(L(-1)\omega) = L(-1)F(\omega)$ that 
    \begin{align*}
        F(\omega)_0 \omega =0 & \Rightarrow -\omega_0 F(\omega) + L(-1)\omega_1 F(\omega)=0 \Rightarrow -L(-1)F(\omega) + 2L(-1)F(\omega) = 0\\
        & \Rightarrow L(-1)F(\omega) = 0
    \end{align*}
    Since $W$ is irreducible and not isomorphic to $V$, we know that from \cite{Li-vacuum-like} that $W$ contains no vacuum-like vectors. Therefore $\ker L(-1) = 0$ and $F(\omega) = 0$.
    \item If $h=0$, then $W = V$. The only nonzero weight 2 element in $V$ is simply scalars of $\omega$. So $F(\omega) = a\omega$ for some $a\in \C$. If $a\neq 0$, then $\omega\in O(V)$. The image of $\omega$ in $A(V)$ would then be zero. This contradicts the result that $A(V) = \C[x]/(G_{p,q}(x))$ in [W] where $x$ is the image of $\omega$ in $A(V)$. Thus $a=0$ and $F(\omega) = 0$. 
\end{enumerate}
\end{proof}

\begin{rema}
Here is a straightforward way to show the $h=0$ case: $F(\omega) = a\omega$ has to satisfy
$$F(\omega_1\omega) = \omega_1F(\omega) + F(\omega)_1\omega \Rightarrow a L(0) \omega = L(0) a \omega + a L(0)\omega \Rightarrow a L(0) \omega = 0 \Rightarrow a = 0$$
\end{rema}


\begin{rema}\label{strictly-less-2}
If $c\neq c_{pq}$ for any choices of $p,q$, then it is possible that $h=1$. Two interesting phenomena happen in this case. First, the zero-mode derivation given by the lowest weight vector is trivial, as we have
$$F(\omega) = w_0 \omega = -\omega_0 w + L(-1)\omega_1 w = -L(-1)w + L(-1)w = 0.$$
So in this case, $\dim Z^1(V, W) = 0 < W_{[1]} / L(-1)W_{[0]}$ (where $W_{[0]}=0$). Secondly, there exists a nontrivial derivation taking image in the (universal) $M(c,h)$, as will be shown in the following proposition. This provides an example where $H^1(V, W)\neq Z^1(V, W)$. 
\end{rema}

\begin{prop}\label{L(-1)w-der}
Let $c\in \C$ be a number that is not of the form $c_{pq}$ as in \ref{Formula-5}. Let $V= L(c, 0), W=M(c,1)$ and $w\in W$ be a nonzero lowest weight vector. Then the map 
$$F(\omega) = L(-1)w$$
extends to a well-defined derivation in $H^1(V, W)$. 
\end{prop}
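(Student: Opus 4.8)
The statement asks us to show that the assignment $F(\omega) = L(-1)w$, where $w$ is a nonzero lowest-weight vector of $W = M(c,1)$, extends to a genuine derivation $F\colon V \to W$. By Proposition \ref{Der-gen} (applied with the generating set $S = \{\omega\}$, since $V = L(c,0)$ is generated by the conformal vector), there is \emph{at most} one derivation with a prescribed value on $\omega$, so the only real content is \emph{existence}: we must verify that no obstruction prevents the naive formula from being consistently propagated to all of $V$ via the derivation identity. The plan is therefore to construct $F$ explicitly on a spanning set of $V$ and check that the two defining properties of a derivation---the grading compatibility $F\circ L(0) = \d_W\circ F$ and the Leibniz-type identity $F(Y(u,x)v) = Y_W(u,x)F(v) + Y_{WV}^W(F(u),x)v$---hold without contradiction.

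First I would reduce the Leibniz identity to a statement about the Virasoro commutation relations. Since $V$ is spanned by elements $L(-n_1)\cdots L(-n_k)\one$, the derivation property forces the recursive formula
\begin{equation*}
F(L(-n)v) = L(-n)F(v) + \big(F(\omega)\big)_{-n+1} v,
\end{equation*}
obtained by taking residues against $x^{-n+1}$ in the Leibniz identity with $u = \omega$. This \emph{defines} $F$ on all of $V$ by induction on the number of Virasoro modes, starting from $F(\one) = 0$. The task is then to show this definition is consistent, i.e.\ that it respects the only relations present among the spanning vectors---namely the Virasoro brackets $[L(-m),L(-n)] = (m-n)L(-m-n) + \tfrac{m^3-m}{12}\delta_{m+n,0}\,c$ acting on $V$, together with the singular-vector relations that cut $L(c,0)$ out of $M(c,0)$. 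Concretely, I would verify that the map $F$ so defined is compatible with the bracket by a direct computation showing
\begin{equation*}
F\big([L(-m),L(-n)]v\big) = L(-m)F(L(-n)v) - L(-n)F(L(-m)v)
\end{equation*}
reduces, via the recursion, to an identity among the modes $\big(L(-1)w\big)_j$ acting on $W$; here the key input is that $Y_{WV}^W(L(-1)w,x) = \tfrac{d}{dx}Y_{WV}^W(w,x)$ by the $L(-1)$-derivative property, which pins down these modes in terms of the intertwining operator for $w$ itself.

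The main obstacle I anticipate is \emph{well-definedness on the quotient} $V = L(c,0)$ rather than on the Verma module $M(c,0)$: one must check that $F$ annihilates (or more precisely, is compatible with) the maximal proper submodule of $M(c,0)$ generated by the singular vector, so that $F$ descends to the simple quotient. This is precisely where the hypothesis $c \neq c_{p,q}$ is expected to enter---and, dually, why the target must be the full Verma module $W = M(c,1)$ rather than the irreducible $L(c,1)$: using the universal $M(c,1)$ guarantees that $L(-1)w$ is a legitimate nonzero weight-$2$ vector with no relations forcing the image to collapse, and the genericity of $c$ ensures the relevant singular-vector obstruction in the source $L(c,0)$ is the expected one so that the recursion is consistent. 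I would isolate this descent check as the crux, handling it by showing that the singular vector $s \in M(c,0)$ satisfies $F(s) = L(-1)\,\zeta$ for an element $\zeta$ forced to be zero, or equivalently that applying the recursion to $s$ yields zero in $M(c,1)$. The remaining verifications---grading compatibility, which follows since $F$ raises weight by exactly $1$ matching $\d_W = L(0)$ on $W$, and the closure of the Leibniz identity for general $u$ once it holds for $u = \omega$ (by Remark \ref{Rmk-2-12} and an induction on the modes of $u$)---are routine and I would treat them briefly.
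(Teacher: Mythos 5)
Your skeleton---uniqueness from Proposition \ref{Der-gen}, a recursive definition of $F$ on Virasoro monomials starting from $F(\omega)=L(-1)w$, then a consistency verification---is the same as the paper's, and you correctly identify the role of the $L(-1)$-derivative property and the reason the target is the Verma module $M(c,1)$. But you have misplaced both the point where the hypothesis $c\neq c_{p,q}$ enters and the actual crux. In the paper the hypothesis is used exactly once, at the outset: it guarantees $L(c,0)=V_{Vir}(c,0)$ (\cite{LL}, Section 6.1), so the maximal proper submodule of $M(c,0)$ is generated by $L(-1)\one$ alone and, by PBW, the normal-ordered monomials $L(-n_1)\cdots L(-n_s)\one$ with $n_1\geq\cdots\geq n_s\geq 2$ form a \emph{basis} of $V$. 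Defining $F$ by your recursion on this basis makes linear well-definedness automatic, and the ``descent check'' you isolate as the crux is either vacuous or reduces to the triviality $F(L(-1)\one)=F(\omega)_0\one=(L(-1)w)_0\one=0$, since the zero mode of $Y_{WV}^W(L(-1)w,x)=\tfrac{d}{dx}Y_{WV}^W(w,x)$ vanishes. There is no deeper singular vector to test, precisely because $c$ is generic; expecting a substantial computation there means the key fact your plan needs has not been identified.

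The real work---which your proposal partly defers as ``routine''---is proving the Leibniz identity $F(Y(\omega,x)v)=Y_W(\omega,x)F(v)+Y_{WV}^W(F(\omega),x)v$ on basis monomials; the paper does this by induction on the length $s$, using the Jacobi identity to move nonnegative modes $\omega_m$ past $\omega_{-n_1}$, the base-case identities $F(\omega)_0\omega=0$, $F(\omega_1\omega)=2F(\omega)$, $F(\omega_2\omega)=0$, and an integration-by-parts argument for the $L(-1)\omega$ term; this occupies essentially the whole proof. Relatedly, your displayed bracket-compatibility identity is wrong as stated: for a derivation, $F(L(-m)L(-n)v)=L(-m)F(L(-n)v)+F(\omega)_{-m+1}L(-n)v$, so the identity that must be verified is
\begin{align*}
F([L(-m),L(-n)]v)&=L(-m)F(L(-n)v)-L(-n)F(L(-m)v)\\
&\quad+F(\omega)_{-m+1}L(-n)v-F(\omega)_{-n+1}L(-m)v,
\end{align*}
and the intertwining-operator terms you dropped are generically nonzero---they are exactly what makes the verification nontrivial. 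With these two corrections (invoke $L(c,0)=V_{Vir}(c,0)$ and its PBW basis to get well-definedness for free, and carry the $F(\omega)$-mode terms through the commutation argument), your plan does become the paper's proof.
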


\begin{proof}
Under our assumption on $c$, $L(c,0) = V(c, 0)$ (see \cite{LL} Section 6.1). By PBW theorem, the vectors 
$$\omega_{-n_1}\cdots \omega_{-n_s}\one = L(-n_1-1)\cdots L(-n_s-1)\one, s\in \N, n_1 \geq \cdots \geq n_s \geq 1$$
for a basis of $L(c,0)$. 

We extend the map $F$ to $V$ recursively by 
\begin{align*}
    F(\one) = 0, F(L(-n)\one) &= \frac 1 {(n-2)!}L(-1)^{n-1}w, n\geq 2\\
    F(L(-n)v) &= F(\omega)_{-n+1} v + \omega_{-n+1} F(v), v = L(-n_1)\cdots L(-n_s)\one, n \geq n_1. 
\end{align*}
Then $F$ is grading-preserving with $F(\omega) = F(L(-2)\one) = L(-1)w$. Moreover, $F(Y(\omega, x)\one) = e^{xL(-1)}F(\omega)$. To show that $F$ is a derivation, we first show that $$F(Y(\omega,x)v) = Y_W(\omega,x)F(v) + Y_{WV}^W(F(\omega), x)v$$
when $v=\omega_{-n_1}\cdots \omega_{-n_s}\one$ for some $n_1 \geq \cdots \geq n_s \geq 1$. We apply induction on $s$. For the base case $s=1$, it is clear that
\begin{align*}
F(\omega_n\omega) &= \omega_n F(\omega) + F(\omega)_n \omega
\end{align*}
for $n\geq 3$ and $n\leq 0$. When $n=2$ or $n=1$, note that $F(\omega_2\omega)=0$, $F(\omega_1\omega)=2F(\omega)$, while
\begin{align*}
    \omega_2 F(\omega) + F(\omega)_2 \omega &= \omega_2F(\omega)-\omega_2F(\omega) = 0\\
    \omega_1 F(\omega) + F(\omega)_1 \omega &= \omega_1 F(\omega) + \omega_1 F(\omega) - L(-1) \omega_2 F(\omega) = 2L(0)L(-1)w - L(-1)L(1)L(-1)w\\
    &= 2L(-1)w
\end{align*}
To perform the inductive step, it suffices to show that 
$$F(\omega_m v) = \omega_m F(v) + F(\omega)_m v$$
for $m\geq 0, v=\omega_{-n_1}\cdots \omega_{-n_s}\one$, $n_1\geq \cdots \geq n_s \geq 1$. Let $v^{(1)} = \omega_{-n_2}\cdots\omega_{-n_s}\one$. Then 
\begin{align}
    F(\omega_m v) &= \Res_{x_1} x_1^m\Res_{x_2} x_2^{-n_1} F(Y(\omega, x_1)Y(\omega, x_2)v^{(1)})\cdot \Res_{x_0} x_0^{-1} \delta\left(\frac{x_1-x_2}{x_0}\right) \nonumber\\
    &= \Res_{x_1} x_1^m\Res_{x_2} x_2^{-n_1} F(Y(\omega, x_2)Y(\omega, x_1)v^{(1)})\cdot \Res_{x_0} x_0^{-1} \delta\left(\frac{-x_2+x_1}{x_0}\right)\nonumber\\
    &\quad + \Res_{x_0} \Res_{x_1} x_1^m\Res_{x_2} x_2^{-n_1} F(Y(Y(\omega, x_0)\omega, x_2)v^{(1)})\cdot  x_1^{-1} \delta\left(\frac{x_2+x_0}{x_1}\right)\nonumber\\
    &= F(\omega_{-n_1}\omega_m v^{(1)}) \label{Compatibility-0-a}\\
    & \quad + \Res_{x_0}\Res_{x_2} (x_2+x_0)^m x_2^{-1} F(Y(Y(\omega,x_0)\omega,x_2)v^{(1)}). \label{Compatibility-0-b}
\end{align}
For (\ref{Compatibility-0-a}), since $m>0$, $\omega_m v^{(1)}$ is a linear combination of $\omega_{-p_1}\cdots \omega_{-p_r}\one$ for some $p_1\geq \cdots p_r \geq 1$ with $r\leq s-1$. Thus from the induction hypothesis, the first term
\begin{align}
    & \quad F(\omega_{-n_1}\omega_m v^{(1)}) = F(\omega)_{-n_1} \omega_m v^{(1)} + \omega_{-n_1} F(\omega)_m v^{(1)} + \omega_{-n_1}\omega_m F(v^{(1)}) \nonumber\\
    & = \Res_{x_2} x_2^{-n_1}\Res_{x_1}x_1^m \Res_{x_0} x_0^{-1}\delta\left(\frac{-x_2+x_1}{x_0}\right)\nonumber\\
    & \quad \cdot \left(Y_{WV}^W(F(\omega), x_2)Y(\omega, x_1)v^{(1)} + Y_W(\omega, x_2) Y_{WV}^W(F(\omega), x_1)v^{(1)} + Y_W(\omega, x_2) Y_W(\omega,x_1)F(v^{(1)})\right)\label{Compatibility-1}
\end{align}
For (\ref{Compatibility-0-b}), we rewrite it as 
$$\Res_{x_2} \sum_{i=0}^m \binom m i  x_2^{-n_1+m-i} F(Y(\omega_i \omega, x_2)v^{(1)})$$
where $\omega_i\omega$ has only three nonzero options: $\omega_0\omega = L(-1)\omega, \omega_1\omega = 2\omega, \omega_3\omega = c \one / 2$. Thus, (\ref{Compatibility-0-a}) can be rewritten as 
\begin{align}
\Res_{x_2} \sum_{i=0}^m \binom m i x_2^{-n_1+m-i} \left(Y_W(\omega_i\omega, x_2)F(v^{(1)}) + Y_{WV}^W (F(\omega)_i\omega, x_2) v^{(1)}+Y_{WV}^W (\omega_iF(\omega), x_2) v^{(1)}\right) \label{Compatibility-2}
\end{align}
using the induction hypothesis. Here the process for the $i = 0$ is slightly more complicated, using $L(-1)$-derivative property, integration by parts formula $\Res_x f(x)g'(x) = -\Res_x f'(x)g(x)$, and the property
$$F(\omega)_0 \omega = (L(-1)w)_0 \omega = 0$$
Details are shown here:
\begin{align*}
& \quad \Res_{x_2} x_2^m F(Y(\omega_0\omega, x_2)v^{(1)})= \Res_{x_2} x_2^m F(Y(L(-1)\omega, x_2)v^{(1)})\\
&= \Res_{x_2} x_2^m\frac{\partial}{\partial x_2} F(Y(\omega, x_2)v^{(1)})= -\Res_{x_2} \left(\frac{\partial}{\partial x_2} x_2^m\right) F(Y(\omega, x_2)v^{(1)}) \\
&= -\Res_{x_2} \left(\frac{\partial}{\partial x_2} x_2^m\right) \left(Y_{WV}^W(F(\omega), x_2) v^{(1)} + Y_W(\omega, x_2)F(v^{(1)})\right)\\
&= \Res_{x_2}  x_2^m \left(\frac{\partial}{\partial x_2}Y_{WV}^W(F(\omega), x_2) v^{(1)} +\frac{\partial}{\partial x_2} Y_W(\omega, x_2)F(v^{(1)})\right)\\
&= \Res_{x_2}  x_2^m \left(Y_{WV}^W(L(-1)F(\omega), x_2) v^{(1)} + 0 + Y_W(L(-1)\omega, x_2)F(v^{(1)})\right)\\
&=\Res_{x_2}  x_2^m \left(Y_{WV}^W(\omega_0 F(\omega), x_2) v^{(1)}  + Y_{WV}^W(F(\omega)_0\omega, x_2)v^{(1)} + Y_W(\omega_0 \omega, x_2)F(v^{(1)})\right)
\end{align*}
Now we rewrite (\ref{Compatibility-2}) as
\begin{align*}
    & \Res_{x_0}\Res_{x_2}x_2^{-n_1}(x_2+x_0)^m \cdot \Res_{x_1} x_1^{-1}\delta\left(\frac{x_2+x_0}{x_1}\right)\\ 
    & \cdot \left(Y_W(Y(\omega,x_0) x_2)F(v^{(1)}) + Y_{WV}^W (Y_{WV}^W(F(\omega), x_0)\omega, x_2) v^{(1)}+Y_{WV}^W (Y_W(\omega,x_0)F(\omega), x_2) v^{(1)}\right).
\end{align*}
Combined with the result (\ref{Compatibility-1}) we computed for (\ref{Compatibility-0-a}) and use Jacobi identity, we find that 
\begin{align*}
    F(\omega_m v) &= \Res_{x_2}x_2^{-n_1}\Res_{x_1}x_1^m \Res_{x_0} x_0^{-1} \delta \left(\frac{x_1-x_2}{x_0}\right)\\
    &\quad \cdot \left(Y_{WV}^W(F(\omega), x_1) Y(\omega, x_2) v^{(1)}+ Y_W(\omega, x_1)Y_{WV}^W (F(\omega),x_2) v^{(1)} + Y_W(\omega, x_1) Y_W(\omega, x_2) F(v^{(1)})\right)\\
    &= F(\omega)_m \omega_{-n_1} v^{(1)} + \omega_m F(\omega)_{-n_1} v^{(1)} + \omega_m \omega_{-n_1} F(v^{(1)}) \\
    &= F(\omega)_m v + \omega_m F(\omega_{-n_1}v^{(1)}) = F(\omega)_m v + \omega_m F(v).
\end{align*}
This finishes the proof of 
$$F(Y(u, x)v) = Y_W(u, x) F(v) + Y_{WV}^W (F(u), x) v$$
in case $u = \omega, v = \omega_{-n_1}\cdots \omega_{-n_s}\one$, $n_1 \geq \cdots \geq n_s \geq 1$.

The proof for the case when $u=\omega_{-p_1}\cdots \omega_{-p_r}\one$ is similar: the base case $r=1$ is checked with $L(-1)$-derivative property. Write $u^{(1)} = \omega_{-p_2}\cdots \omega_{-p_r}\one$ so that $u = \omega_{-p_1} u^{(1)}$. We then start from the iterate side of the Jacobi identity, write them as products and use the results on products to conclude the proof. We shall not include the details here. 
\end{proof}


\subsection{Irreducible module with negative lowest weight} 

The case when $h<0$ is much more interesting. Here we prove a partial result that $H^1(V, W) = Z^1(V, W)$ for $W = L(c,h)$ whenever $h\geq -3$. We first note the following:

\begin{lemma}\label{Image-L(-1)^4}
Let $F: V\to W$ be a derivation. Suppose that $F(\omega)\in \text{Im}L(-1)^4$, then $F(\omega)= 0$. So $F(v) = 0$ for any $v\in V$. 
\end{lemma}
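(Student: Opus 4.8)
The plan is to collapse everything onto the single vector $\Phi := F(\omega)$ and then squeeze enough linear constraints out of the derivation axiom to annihilate it. The second assertion is free once the first holds: since $V = L(c,0)$ is generated by the conformal vector $\omega$, Proposition \ref{Der-gen} shows that a derivation vanishing on $\omega$ vanishes identically. So the whole task is to prove $\Phi = 0$ from the hypothesis $\Phi \in \text{Im}\,L(-1)^4$.

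First I would record two structural facts. Because $W$ is irreducible and not isomorphic to $V$ (its lowest weight is negative), it has no vacuum-like vectors, so $L(-1)$ is injective on $W$ by \cite{Li-vacuum-like}; consequently $\text{Im}\,L(-1)^4 \cap W_{[2]} = L(-1)^4 W_{[-2]}$, which is nonzero only when $W_{[-2]}\neq 0$, i.e. only when the lowest weight $h$ satisfies $h\leq -2$. In the range relevant to Theorem \ref{Vir-neg-energy-thm} this leaves exactly $h\in\{-2,-3\}$, and in each case $W_{[-2]}$ is one-dimensional (there is no level-one singular vector, as $h\neq 0$), so I may write $\Phi = L(-1)^4\eta$ for a fixed generator $\eta$ of $W_{[-2]}$, the scalar absorbed into $\eta$.

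The key constraint comes from evaluating the derivation on $\omega_2\omega$. Since $\omega_2\omega = L(1)\omega = 0$, the derivation axiom gives $0 = \omega_2\Phi + \Phi_2\omega = L(1)\Phi + \Phi_2\omega$. Expanding $\Phi_2\omega$ by the mode formula in the Notation above (with $\omega_{2+j}=L(j+1)$), the $L(1)\Phi$ contributions cancel and every surviving term carries a factor of $L(-1)$; dividing it off by injectivity yields
\[ L(2)\Phi - \tfrac12 L(-1)L(3)\Phi + \tfrac16 L(-1)^2 L(4)\Phi - \cdots = 0. \]
I would then substitute $\Phi = L(-1)^4\eta$ and compute each $L(k)\Phi$ by commuting the raising operator through the four copies of $L(-1)$ via $[L(k),L(-1)]=(k+1)L(k-1)$, reducing everything to the action of Virasoro raising operators on the low-weight vector $\eta$. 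Once the lower operators are pushed through, every term becomes a multiple of $L(-1)^2\eta$, so the left-hand side collapses to a single scalar times $L(-1)^2\eta$; injectivity of $L(-1)$ then forces $\eta=0$, hence $\Phi=0$.

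The hard part will be the bookkeeping in this last computation, and in particular the case $h=-3$, where $\eta$ is \emph{not} a lowest-weight vector: there $L(1)\eta\neq 0$, so the commutator reductions produce extra correction terms that must be tracked and shown not to cancel the leading coefficient. A direct calculation (using the iteration generated by $L(1)L(-1)^n v_h = n(2h+n-1)L(-1)^{n-1}v_h$ to tabulate the needed $L(k)L(-1)^n v_h$) yields leading scalars $-4$ for $h=-2$ and $-10$ for $h=-3$, both nonzero. It is worth remarking that the companion constraint from $\omega_1\omega = 2\omega$, namely $\Phi_1\omega = 0$, turns out to be identically satisfied by the relevant $\Phi\in\text{Im}\,L(-1)^4$; this is precisely why one must exploit the $\omega_2\omega$ relation rather than the more obvious weight-preserving one.
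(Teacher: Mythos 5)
Your computations, where they apply, are correct: with $\Phi=L(-1)^4\eta$ and lowest weight $h=-2$ (so $\Phi=L(-1)^4w$), the relation extracted from $\omega_2\omega=0$ collapses to $-4\,L(-1)^2w=0$, and for $h=-3$ (so $\Phi=L(-1)^5w$) it collapses to $-10\,L(-1)^3w=0$; I verified both scalars, and injectivity of $L(-1)$ then kills $\Phi$. The reduction of the second assertion to Proposition \ref{Der-gen} is also fine, and your $\omega_2$-relation is precisely the $m=1$ case of Lemma \ref{L(2m)-action}, so that part is consistent with the paper.

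However, there is a genuine gap of scope. The lemma is stated (and proved in the paper) for an arbitrary negative-energy module $W=L(c,h)$ of the minimal-model VOA; nothing in its hypotheses restricts the lowest weight. You restrict to $h\in\{-2,-3\}$ by appealing to ``the range relevant to Theorem \ref{Vir-neg-energy-thm}'', i.e., by importing knowledge of where the lemma will later be applied, which is not available when proving the lemma itself. Moreover, this restriction is exactly what makes your collapse step work: for $h\le-4$ the space $W_{[-2]}$ is no longer one-dimensional (for $h=-4$ it is spanned by $L(-2)w$ and $L(-1)^2w$), $\eta$ is no longer annihilated by $L(1)$ and $L(2)$, and $L(2)L(-1)^4\eta$ then contains the terms $L(-1)^4L(2)\eta$ and $12\,L(-1)^3L(1)\eta$, which are not multiples of $L(-1)^2\eta$; a single relation in the multi-dimensional space $W_{[0]}$ cannot force $\eta=0$. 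The paper avoids this with a bootstrap induction that is uniform in $h$: writing $F(\omega)=L(-1)^p w_{(-p+2)}$ for $p\ge4$, the identity $F(\omega_p\omega)=0$ (valid because $L(p-1)\omega=0$ for $p\ge4$) yields, after cancelling one $L(-1)$, that $p!\bigl(1-p^2+2p+(-1)^p(1-p)\bigr)w_{(-p+2)}\in\text{Im}\,L(-1)$, with nonzero coefficient for every $p\ge4$; hence $F(\omega)\in\text{Im}\,L(-1)^{p+1}$, and iterating, $F(\omega)\in\text{Im}\,L(-1)^p$ for all $p$, which forces $F(\omega)=0$ since the weights of $W$ are bounded below. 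That induction --- one new relation for each exponent $p$, rather than one fixed relation at $p=4$ --- is the idea your proof is missing; without it (or some replacement argument for $h\le-4$), you have proved a strictly weaker statement than the lemma.
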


\begin{proof}

We show that if $F(\omega)\in \text{Im}L(-1)^p$ for some $p\geq 4$, then $F(\omega)\in \text{Im}L(-1)^{p+1}$. Thus if $F(\omega)\in \text{Im}L(-1)^4$, then $F(\omega) \in \text{Im}L(-1)^p$ for every $p \geq 4$. This implies $F(\omega)=0$. 

It can be shown by a straightforward induction that for every $m\in \Z_+$ and every $p\in \Z_+$, 
$$L(m)L(-1)^p = \sum_{i=0}^p i!\cdot \binom p i  \binom{m+1} i  L(-1)^{p-i}L(m-i).$$
Now assume that $F(\omega)\in \text{Im}L(-1)^p$, i.e., $$F(\omega) = L(-1)^{p} w_{(-p+2)}$$
for some $w_{(-p+2)}$ of weight $-p+2$. From the assumption that $F$ is a derivation, 
$$F(\omega_p \omega) = \omega_p F(\omega) + F(\omega)_p\omega.$$
We compute the first term on the right-hand-side using the formula for $L(m)L(-1)^p$: 
\begin{align*}
    L(p-1)L(-1)^p w_{(-p+2)} & = \sum_{i=0}^p i! \binom p i \binom {p} i L(-1)^{p-i}L(p-1-i)w_{(-p+2)}\\
    & = p! L(-1)w_{(-p+2)} + p! \cdot p \cdot L(-1)\cdot L(0) w_{(-p+2)} \\
    & \qquad + \sum_{i=0}^{p-2} i! \binom p i \binom {p} i L(-1)^{p-i}L(p-1-i)w_{(-p+2)}\\
    &= p!(1 - p^2 + 2p) L(-1)w_{(-p+2)} \\
    & \qquad + L(-1)^2 \sum_{i=0}^{p-2} i! \binom p i \binom {p-2} i L(-1)^{p-i}L(p-1-i)w_{(-p+2)}
\end{align*}
From the general fact that 
$$(L(-1)w)_n = -n w_{n-1},$$
we know that
\begin{align*}
    F(\omega)_p \omega & = (L(-1)^p w_{(-p+2)})_p \omega = (-1)^p\cdot 
    p!\cdot (w_{(-p+2)})_0 \omega \\ 
    &= (-1)^p \cdot p! \left((-1)^{0+1}\omega_0  + (-1)^{1+1} L(-1)\omega_1  + L(-1)^2 \sum_{j=2}^\infty \frac{(-1)^{j+1}}{j!}L(-1)^{j-2}\omega_j\right)w_{(-p+2)}\\
    &= (-1)^p \cdot p! \left((-p+1) L(-1)  + L(-1)^2 \sum_{j=2}^\infty \frac{(-1)^{j+1}}{j!}L(-1)^{j-2}\omega_j\right)w_{(-p+2)}
\end{align*}
Thus from $F(\omega_p \omega)=0$ (note that $p\geq 4)$, we have
\begin{align*}
    0 &= \omega_p F(\omega) + F(\omega)_p \omega \\
    &= p!(1 - p^2 + 2p) L(-1)w_{(-p+2)} \\
    & \qquad + L(-1)^2 \sum_{i=0}^{p-2} i! \binom p i \binom {p-2} i L(-1)^{p-i}L(p-1-i)w_{(-p+2)}\\
    & \qquad +  (-1)^p \cdot p!(-p+1) L(-1) w_{(-p+2)} + (-1)^p \cdot p! L(-1)^2 \sum_{j=2}^\infty \frac{(-1)^{j+1}}{j!}L(-1)^{j-2}\omega_jw_{(-p+2)}
\end{align*}
Since $W$ is irreducible and not $L(0)$-isomorphic to $V$, we know from \cite{Li-vacuum-like} that $\ker L(-1)= 0$. Therefore, 
\begin{align}
    p!(1-p^2+2p + (-1)^p(1-p))w_{(-p+2)} &= L(-1)  \sum_{i=0}^{p-2} i! \binom p i \binom {p-2} i L(-1)^{p-i}L(p-1-i)w_{(-p+2)} \nonumber\\
    &\qquad + (-1)^p \cdot p! L(-1)^2 \sum_{j=2}^\infty \frac{(-1)^{j+1}}{j!}L(-1)^{j-2}\omega_jw_{(-p+2)}\label{Formula-2}
\end{align}
One see that the coefficient is nonzero whenever $p\geq 4$. Thus $w_{(-p+2)}\in \text{Im}L(-1)$, which implies that $F(\omega)\in \text{Im}L(-1)^{p+1}$.
\end{proof}

\begin{rema}
Note that the coefficient of $w_{(-p+2)}$ in (\ref{Formula-2}) is zero for $p=2$ and $p=3$. This is why we need $F(\omega)\in \text{Im} L(-1)^4$. 
\end{rema}

\begin{lemma}\label{L(2m)-action}
$F(\omega)\in \text{Im}L(-1)$. Moreover, for any $m\in \Z_+$,
\begin{align*}
    & L(2m)F(\omega)\in \text{Im}L(-1), \\ & L(2m)F(\omega)-\frac 1 2 L(-1)L(2m+1)F(\omega)\in \text{Im}L(-1)^2. 
\end{align*}
\end{lemma}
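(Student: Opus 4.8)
The plan is to establish the three assertions in a fixed order, each feeding the next. First I would prove $F(\omega)\in \operatorname{Im}L(-1)$. Since $F\circ L(-1)=L(-1)\circ F$ and $F(\one)=0$, we have $F(\omega)_0\omega=F(L(-1)\omega)$-type relations available; more directly, I would exploit the derivation identity $F(\omega_1\omega)=\omega_1F(\omega)+F(\omega)_1\omega$, which reads $2F(\omega)=L(0)F(\omega)+F(\omega)_1\omega$. Using $\d_W F(\omega)=F(L(0)\omega)=2F(\omega)$ (so $F(\omega)$ sits in weight $2$ relative to $\d_W$, meaning $L(0)F(\omega)=(2+\alpha)F(\omega)$ when $\d_W=L(0)+\alpha$) together with the explicit formula $F(\omega)_1\omega=\omega_1F(\omega)-L(-1)\omega_2F(\omega)+\cdots$ from the Notation, I would solve for $F(\omega)$ and read off that the non-$L(-1)$ part must vanish because $\ker L(-1)=0$ forces consistency; this is exactly the kind of computation already run in Theorem \ref{Vir-pos-energy} and Lemma \ref{Image-L(-1)^4}, so the mechanism is in place.

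Second, to get $L(2m)F(\omega)\in\operatorname{Im}L(-1)$ for every $m\in\Z_+$, I would apply the derivation property to $\omega_{2m+1}\omega$. Concretely, $F(\omega_{2m+1}\omega)=\omega_{2m+1}F(\omega)+F(\omega)_{2m+1}\omega$, where the left-hand side is computable because $\omega_{2m+1}\omega$ is an explicit (small) element of $V$ — for $m\ge 1$ it is a scalar multiple of $\one$ or $0$, so $F$ of it vanishes or is controlled. The first term is $L(2m)F(\omega)$, and the second term $F(\omega)_{2m+1}\omega$ I would expand via the skew-symmetry formula $w_iv=(-1)^{i+1}\sum_{j\ge 0}\frac{(-1)^j}{j!}L(-1)^jv_{i+j}w$, all of whose $j\ge 1$ terms lie in $\operatorname{Im}L(-1)$; the $j=0$ term is $(-1)^{2m+2}\omega_{2m+1}F(\omega)=L(2m)F(\omega)$ again, so the two copies combine and isolate $L(2m)F(\omega)$ modulo $\operatorname{Im}L(-1)$, whence it lands in $\operatorname{Im}L(-1)$.

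Third, the refined statement $L(2m)F(\omega)-\tfrac12 L(-1)L(2m+1)F(\omega)\in\operatorname{Im}L(-1)^2$ requires tracking the computation one order of $L(-1)$ deeper. Here I would retain the $j=1$ term in the skew-symmetry expansion of $F(\omega)_{2m+1}\omega$, namely $-L(-1)\omega_{2m+2}F(\omega)=-L(-1)L(2m+1)F(\omega)$, and combine it with the analogous contribution coming from writing $F(\omega)\in\operatorname{Im}L(-1)$ as $F(\omega)=L(-1)u$ and commuting $L(2m)$ past $L(-1)$ using the identity $L(m)L(-1)^p=\sum_i i!\binom pi\binom{m+1}i L(-1)^{p-i}L(m-i)$ recorded in Lemma \ref{Image-L(-1)^4}. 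The factor $\tfrac12$ should emerge from matching the coefficient of the single-$L(-1)$ term against the $\binom{m+1}{1}$-weighted commutator term; everything beyond first order in $L(-1)$ is absorbed into $\operatorname{Im}L(-1)^2$.

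The main obstacle I anticipate is the third part: keeping precise track of coefficients to second order in $L(-1)$, since the skew-symmetry expansion, the commutation formula for $L(2m)L(-1)$, and the $\d_W$-versus-$L(0)$ weight bookkeeping all contribute competing $L(-1)$-terms that must cancel down to the asserted $\tfrac12$ coefficient. The first two parts are essentially a single application of the derivation axiom to a well-chosen product $\omega_n\omega$ together with $\ker L(-1)=0$, and should be routine; the delicate part is verifying that no stray first-order term survives outside the $\tfrac12 L(-1)L(2m+1)F(\omega)$ contribution.
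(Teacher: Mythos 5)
Your proposal is correct, but it takes a genuinely different route from the paper's proof, and in one respect a cleaner one. The paper applies the derivation property to the \emph{even}-indexed products $\omega_0\omega=L(-1)\omega$ (giving $F(\omega)_0\omega=0$ via $F\circ L(-1)=L(-1)\circ F$) and $\omega_{2m}\omega=0$; with an even index the leading term of the skew-symmetry expansion of $F(\omega)_{2m}\omega$ \emph{cancels} against $\omega_{2m}F(\omega)$, so each identity collapses to the form $L(-1)\bigl(L(2m)F(\omega)-\tfrac 12 L(-1)L(2m+1)F(\omega)+L(-1)^2(\cdots)\bigr)=0$, and the paper must then invoke $\ker L(-1)=0$ (valid because $W$ is irreducible and not isomorphic to $V$, by \cite{Li-vacuum-like}) to strip the outer $L(-1)$. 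You instead use $\omega_1\omega=2\omega$ and the \emph{odd}-indexed products $\omega_{2m+1}\omega$ (equal to $\tfrac c2\one$ for $m=1$ and to $0$ for $m\geq 2$, so $F$ annihilates them); with an odd index the two leading terms \emph{add} rather than cancel, so the identity reads $0=2L(2m)F(\omega)-L(-1)L(2m+1)F(\omega)+\text{Im}L(-1)^2$, and both the second and third assertions follow at once upon dividing by $2$, with no kernel argument; likewise $0=F(\omega)_1\omega=2F(\omega)-L(-1)L(1)F(\omega)+\cdots$ exhibits $F(\omega)\in\text{Im}L(-1)$ directly. Your parity choice thus trades the paper's reliance on $\ker L(-1)=0$ for a harmless overall factor of $2$.

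Two corrections to your writeup, neither fatal. First, the factor $\tfrac 12$ in the third assertion does \emph{not} come from commuting $L(2m)$ past $L(-1)$: there is no need to write $F(\omega)=L(-1)u$ at all. It arises solely from the doubled $j=0$ term, i.e.\ from dividing $2L(2m)F(\omega)-L(-1)L(2m+1)F(\omega)\in\text{Im}L(-1)^2$ by $2$; the commutator detour you flag as the ``main obstacle'' would only generate terms such as $L(2m-1)u$ over which you have no independent control, so it should simply be dropped --- your own skew-symmetry expansion already finishes the proof. Second, a sign slip: if $\d_W=L(0)+\alpha$, grading preservation gives $L(0)F(\omega)=(2-\alpha)F(\omega)$, not $(2+\alpha)F(\omega)$; this is immaterial here, since the lemma sits in the negative-energy setting where $\d_W=L(0)$, so $F(\omega)_1\omega=0$ exactly as your argument needs.
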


\begin{proof}
From $F\circ L(-1) = L(-1)\circ F$, we see that 
\begin{align*}
    0 &= F(\omega)_0 \omega = (-1)^{0+1}\omega_0 F(\omega) + L(-1)\omega_1 F(\omega) + L(-1)^2 \sum_{j=2}^\infty \frac{(-1)^{1+j}}{j!}L(-1)^{j-2} \omega_{j}F(\omega)\\
    &= L(-1)F(\omega)  + L(-1)^2\sum_{j=2}^\infty \frac{(-1)^{1+j}}{j!} L(-1)^{j-2}L(j-1)F(\omega).
\end{align*}
Then from $\ker L(-1)=0$, we see that 
$$ F(\omega) = L(-1)\sum_{j=2}^\infty \frac{(-1)^{1+j}}{j!} L(-1)^{j-2}L(j-1)F(\omega)\in \text{Im} L(-1). $$

In general, for $m\in \Z_+$
\begin{align*}
    0 &= \omega_{2m} F(\omega) + F(\omega)_{2m} \omega \\
    &= \omega_{2m} F(\omega) + (-1)^{2m+1} \omega_{2m} F(\omega) + L(-1)\omega_{2m+1} F(\omega) + \sum_{j=2}^\infty \frac{(-1)^{j+1+2m+1}}{j!}L(-1)^j \omega_{2m+j}F(\omega)\\
    &= L(-1)L(2m)F(\omega) -  \frac 1 {2!} L(-1)^2 L(2m+1)F(\omega) + L(-1)^3\sum_{j=3}^\infty \frac{(-1)^{j+3}}{j!}L(-1)^{j-3} \omega_{2m+j}F(\omega)
\end{align*}
From $\ker L(-1)=0$, we see that 
$$L(2m)F(\omega) -  \frac 1 {2!} L(-1) L(2m+1)F(\omega) + L(-1)^2\sum_{j=3}^\infty \frac{(-1)^{j+3}}{j!}L(-1)^{j-3} \omega_{2m+j}F(\omega)=0$$
The conclusion then follows. 
\end{proof}

\begin{prop}\label{Vir-neg-energy-prop}
Let $V=L(c,0)$ and $W=L(c,h)$ with $c=c_{p,q}$ and $h=h_{m,n}$ as in (\ref{Formula-5}) and (\ref{Formula-6}). Let $F: V \to W$ be a derivation. Then for $h=-1, -2$ and $-3$, 
\begin{enumerate}
    \item $F(\omega)\in \text{Im}L(-1)^2$. 
    \item There exists $w_{(1)}\in W_{[1]}$ such that $F(\omega)- (w_{(1)})_0\omega \in \text{Im}L(-1)^4$. 
\end{enumerate}
\end{prop}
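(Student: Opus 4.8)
The plan is to regard $F(\omega)\in W_{[2]}$ as the only unknown — by Proposition \ref{Der-gen} and Remark \ref{Rmk-2-12} the derivation $F$ is determined by its value on the generator $\omega$ — and to push $F(\omega)$ down the chain $\text{Im}L(-1)\supseteq\text{Im}L(-1)^2\supseteq\cdots$ using the derivation relations
$$F(\omega_n\omega)=\omega_n F(\omega)+F(\omega)_n\omega,\qquad \omega_n=L(n-1),$$
together with the injectivity $\ker L(-1)=0$ (valid since $W\not\simeq V$, by \cite{Li-vacuum-like}). The decisive simplification is that the sums truncate: since $F(\omega)$ has weight $2$ and the lowest weight of $W$ is $h\geq -3$, one has $L(k)F(\omega)\in W_{[2-k]}=0$ for $k\geq 2-h$, so every instance of $F(\omega)_n\omega=\sum_{j\geq 0}\frac{(-1)^{n+1+j}}{j!}L(-1)^jL(n+j-1)F(\omega)$ is a finite sum. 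I would first record the consequences of $\omega_n\omega=0$ (holding for $n=2$ and $n\geq 4$), of $\omega_1\omega=2\omega$, and of $\omega_3\omega=\tfrac{c}{2}\one$; Lemma \ref{L(2m)-action} already packages the even relations, and the odd ones follow identically from $\omega_{2m+1}\omega=0$ for $m\geq 1$.

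For part (1), expanding $F(\omega)_0\omega=0$ (which is exactly $F\circ L(-1)=L(-1)\circ F$) and dividing by $L(-1)$ yields
$$F(\omega)=\tfrac12 L(-1)L(1)F(\omega)+R,\qquad R\in\text{Im}L(-1)^2,$$
so by injectivity of $L(-1)$ the claim $F(\omega)\in\text{Im}L(-1)^2$ is equivalent to $L(1)F(\omega)\in\text{Im}L(-1)$, a statement inside the finite-dimensional space $W_{[1]}$. To prove this I would use the non-degenerate invariant (Shapovalov) form of the irreducible module $W$, under which $\text{Im}(L(-1):W_{[0]}\to W_{[1]})$ has orthogonal complement $\ker(L(1):W_{[1]}\to W_{[0]})$; hence $L(1)F(\omega)\in\text{Im}L(-1)$ is equivalent to $F(\omega)\perp L(-1)\ker(L(1)|_{W_{[1]}})$. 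The plan is then to combine two finite inputs: the relation $L(1)^2F(\omega)=0$ (extracted from the truncated even/odd relations, giving $F(\omega)\perp\text{Im}L(-1)^2$), and the inclusion $\ker(L(1)|_{W_{[1]}})\subseteq L(-1)W_{[0]}$ (giving $L(-1)\ker(L(1)|_{W_{[1]}})\subseteq\text{Im}L(-1)^2$); together these force the required orthogonality.

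For part (2), write $F(\omega)=L(-1)^2\zeta$ with $\zeta\in W_{[0]}$, using part (1). For any $w_{(1)}\in W_{[1]}$ the $w_iv$ formula together with $L(0)w_{(1)}=w_{(1)}$ makes the $L(-1)^0$ and $L(-1)^1$ terms cancel, leaving
$$(w_{(1)})_0\omega=-\tfrac12 L(-1)^2L(1)w_{(1)}+\tfrac16 L(-1)^3L(2)w_{(1)}+S,\qquad S\in\text{Im}L(-1)^4.$$
Since $L(-1)^2$ is injective, the condition $F(\omega)-(w_{(1)})_0\omega\in\text{Im}L(-1)^4$ is equivalent to
$$\zeta+\tfrac12 L(1)w_{(1)}-\tfrac16 L(-1)L(2)w_{(1)}\in\text{Im}L(-1)^2,$$
an equation in the small space $W_{[0]}$. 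I would solve it by showing that the map $w_{(1)}\mapsto \tfrac12 L(1)w_{(1)}-\tfrac16 L(-1)L(2)w_{(1)}$ from $W_{[1]}$ to $W_{[0]}/(\text{Im}L(-1)^2\cap W_{[0]})$ is surjective, so that a suitable $w_{(1)}$ exists; here the leading piece $L(1):W_{[1]}\to W_{[0]}$ modulo $\text{Im}L(-1)$ already tends to be onto.

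The main obstacle is the module-theoretic input shared by both parts: the inclusion $\ker(L(1)|_{W_{[1]}})\subseteq L(-1)W_{[0]}$ in part (1) and the surjectivity in part (2). Neither is a formal consequence of the derivation axioms — both reflect the null-vector (singular-vector) pattern of $L(c,h)$ and must be checked from the explicit structure of the weight spaces $W_{[-2]},\dots,W_{[2]}$. The hypothesis $h\geq -3$ is exactly what keeps this tractable: it confines $F(\omega)$ to level at most $5$ above the lowest weight, truncates all the relation sums, and — paralleling the vanishing of the coefficient of $w_{(-p+2)}$ in \eqref{Formula-2} for $p=2,3$ in Lemma \ref{Image-L(-1)^4} — avoids the low-level degeneracies that would otherwise break the argument. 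I expect the bulk of the work, and the genuine case distinctions among $h=-1,-2,-3$, to reside in this finite verification.
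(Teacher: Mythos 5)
Your two reductions are sound and parallel the paper's use of $\ker L(-1)=0$: part (1) is indeed equivalent to $L(1)F(\omega)\in\text{Im}L(-1)$, and your part (2) reduction recovers exactly the equation the paper solves with the explicit elements $(L(-2)w)_0\omega$, $(L(-3)w)_0\omega$, $(L(-4)w)_0\omega$, $(L(-2)^2w)_0\omega$. The gap is in part (1): both inputs you feed into the Shapovalov-form argument are false. The inclusion $\ker(L(1)|_{W_{[1]}})\subseteq L(-1)W_{[0]}$ fails already for $h=-1$: there $W_{[0]}=\C L(-1)w$ and $W_{[1]}=\text{span}\{L(-2)w,L(-1)^2w\}$ (linear independence being the Kac-determinant input), and from $L(1)L(-2)w=3L(-1)w$ and $L(1)L(-1)^2w=-2L(-1)w$ one gets $\ker(L(1)|_{W_{[1]}})=\C\bigl(2L(-2)w+3L(-1)^2w\bigr)$, which is not contained in $L(-1)W_{[0]}=\C L(-1)^2w$. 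The relation $L(1)^2F(\omega)=0$ is likewise not extractable from the derivation relations, because an actual derivation violates it: for $h=-2$ take the zero-mode derivation $F=(L(-3)w)_0$, whose value $F(\omega)=-2\bigl(L(-1)^2L(-2)+\tfrac{1}{12}(-8+\tfrac{1}{2}c)L(-1)^4\bigr)w$ appears in the paper's proof of part (2); using $L(1)^2L(-1)^2L(-2)w=-12L(-1)^2w$ and $L(1)^2L(-1)^4w=24L(-1)^2w$ (valid at $h=-2$), one computes $L(1)^2F(\omega)=-2(c-28)L(-1)^2w\neq 0$, since $c=c_{p,q}\leq 1$. So no argument can derive $L(1)^2F(\omega)=0$ from the $\omega_n\omega$ relations, as this $F$ satisfies all of them.

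There is also a structural reason your mechanism cannot work: it is blind to the central charge. For $h=-1$ neither $\ker(L(1)|_{W_{[1]}})$ nor $L(-1)W_{[0]}$ involves $c$ (no central terms occur in $[L(1),L(-1)]$ or $[L(1),L(-2)]$), so your claims, were they true, would prove part (1) for arbitrary $c$. But the conclusion genuinely uses minimality: in the paper the coefficient $a_{12}$ at $h=-1$ is killed by the constraint $a_{12}\bigl(7+\tfrac14 c\bigr)L(-1)w=0$, so one needs $7+\tfrac14c\neq 0$, and at $h=-3$ one needs a $2\times 2$ system that degenerates only at $c=188/5$. The $c$-dependence enters when the constraints of Lemma \ref{L(2m)-action}, namely $L(2)F(\omega)\in\text{Im}L(-1)$ and $L(2)F(\omega)-\tfrac12 L(-1)L(3)F(\omega)\in\text{Im}L(-1)^2$, are evaluated on a PBW expression for $F(\omega)$, through the central terms of $[L(2),L(-2)]$, $[L(3),L(-3)]$. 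These constraints --- which you list but never actually use --- are the engine of the paper's proof of part (1), and the coefficient bookkeeping they require is what must replace your two claims. Your part (2) outline is essentially the paper's argument conditional on part (1), but the ``tends to be onto'' step is again a $c$-dependent verification ($c\neq 26$ for $h=-1$, and analogous nondegeneracy conditions for $h=-2,-3$) that has to be carried out explicitly.
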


\begin{proof}
The following fact will be convenient in computations. Let $m, n, p \in \Z_+$. If $m\neq kn$ for any $k\in\Z_+$, then 
\begin{align}
    L(m)L(-n)^p = \sum_{i=0}^p \binom p i \cdot \prod_{j=1}^i (m+(2-j)n)\cdot L(-n)^{p-i}L(m-in). 
\end{align}
If $m=kn$ for some $k\in \Z_+$, then 
\begin{align}
    L(kn)L(-n)^p &= \sum_{i=0}^p \binom p i \cdot \prod_{j=1}^i (k+(2-j)n)\cdot L(-n)^{p-i}L(kn-in) \nonumber \\
    & \quad+ \binom p k \frac{(k+1)!}{2}n^{k-1}\frac{n^3-n}{12}c L(-n)^{p-k}. 
\end{align}
These formulas can be easily proved by induction. 

To give a sketch, we first write $F(\omega)$ as a linear combination of vectors 
\begin{align}
    L(-1)^{r_1}L(-2)^{r_2}\cdots L(-n)^{r_n}w \label{Formula-3}
\end{align}
where $w\in W_{[-h]}$ is the lowest weight vector of $W$, and $r_1 + 2r_2 + \cdots + nr_n = h+2$. Note that $F(\omega)\in \text{Im}L(-1)$ implies that $r_1\geq 1$. Then determine the coefficients using Lemma \ref{L(2m)-action}. 

We should note here that the vectors listed in (\ref{Formula-3}) are linearly independent. Indeed, since $r_1\geq 1$ and $\ker L(-1)=0$, it suffices to show that $M(c,h)$ contains no singular vectors when the conformal weight is less or equal to 1. The main tool is Kac determinant formula, which tells that the Gram matrix formed by vectors in $M(c,h)$ of conformal weight 1 has its determinant proportional to 
$$\prod_{\substack{r,s\in \Z_+,\\ 1\leq rs \leq -h+1}} (h-h_{r,s})^{P(-h+1-rs)}$$
where $P(n)$ is the number of partitions of $n$ (see \cite{Kac-determinant}, \cite{Feigin-Fuchs} and \cite{IK}). A singular vector exists only when this determinant vanishes, which happens only when $r$ and $s$ are chosen such that
\begin{align}
    h = h_{r,s}, -h+1-rs\geq 0. \label{Formula-4}
\end{align}
Recall that our $h$ is chosen to be $h_{m,n}$. It follows from an elementary computation that $h=h_{r,s}$ if and only if $r=m,s=n$ or $r=kp-m,s=kq-n$ for some positive integer $k$. Without loss of generality, we can assume that $2m \leq p$ and $2n \leq q$, so that any $(r,s)$ satisfying $h=h_{r,s}$ would also satisfy $r\geq m, s \geq n$, and thus $rs\geq mn$. 

We now compute $mn+h-1$. 
\begin{align*}
    mn+h-1 = mn + \frac{(np-mq)^2-(p-q)^2}{4pq} - 1 = \frac{(np+mq)^2-(p+q)^2}{4pq}
\end{align*}
Since $m\geq 1$ and $n\geq 1$, $np+mq \geq p+q > 0$. Thus the numerator is nonnegative, and thus $mn \geq 1-h$. This is to say that any $(r,s)$ satisfying $h=h_{r,s}$ would also satisfy $rs \geq mn >-h+1$. So there is no $(r,s)$ satisfying (\ref{Formula-4}). Thus the determinant is nonvanishing. 

A similar argument also shows that the determinant of the Gram matrix is nonvanishing on any homogeneous subspaces of $M(c,h)$ of conformal weight less or equal to 1. So there does not exist any singular vectors in $M(c,h)$ of conformal weight less or equal to 1. This implies that vectors of the form (\ref{Formula-3}) with $r_1, ..., r_n\geq 0$, $r_1 + 2r_2 + \cdots + nr_n \leq 1$ are all linearly independent.  

Now we proceed with our computation: 

\begin{enumerate}
    \item \textbf{When $\boldsymbol{h=-1}$,} we set 
$$F(\omega) = a_{12}L(-1)L(-2)w + a_{111} L(-1)^3 w.$$
Then 
\begin{align*}
    L(2)F(\omega) &= a_{12}\left(5+ \frac 1 2 c\right)L(-1)w + a_{111}\left(-2\cdot 3\cdot 2\right)L(-1)w,\\
    L(3)F(\omega) &= a_{12}\left(-4+\frac 1 2 c\right)w + a_{111}\left(-4\cdot 3\cdot 2\right)w,
\end{align*}
It turns out that
$$L(2)F(\omega)-\frac 1 2 L(-1)L(3)F(\omega)= a_{12}\left(7+\frac 1 4 c \right)L(-1)w.$$
From Lemma \ref{L(2m)-action}, we know $L(2)F(\omega)-\frac 1 2 L(-1)L(3)F(\omega)\in \text{Im}L(-1)^2=\{0\}$. So $a_{12}=0$ if $7+\frac 1 4 c \neq 0$. This is guaranteed by our choice of $c$. In fact, $7+\frac 1 4 c = 0$ implies that 
$$p = \frac{41\pm \sqrt{1537}}{12} q. $$
Obviously no integers $p,q$ can satisfy this. Thus we have $a_{12}=0$. So
$$F(\omega) = a_{111}L(-1)^3 w \in \text{Im}L(-1)^2. $$

\noindent\textbf{When $\boldsymbol{h=-2}$,} we set 
$$F(\omega) = a_{13}L(-1)L(-3)w + a_{112} L(-1)^2L(-2)w + a_{1111} L(-1)^4w. $$
Then 
\begin{align*}
    L(2)F(\omega)&= a_{13}\left( 3\cdot 4 L(-2) + 5 L(-1)^2\right)w + a_{112}\left(\left(4+\frac 1 2 c\right)L(-1)^2\right)w \\
    &\quad + a_{1111}\left((-8)\cdot 3 \cdot 2 L(-1)^2\right)w.
\end{align*}
From $L(2)F(\omega)\in \text{Im}L(-1)$,  $a_{13}=0$. Thus, 
$$F(\omega) = a_{112}L(-1)^2L(-2)w + a_{1111}L(-1)^4 w \in \text{Im}L(-1)^2.$$

\noindent\textbf{When $\boldsymbol{h=-3}$,} we set 
\begin{align*}
    F(\omega) &= a_{14}L(-1)L(-4)w + a_{122} L(-1)L(-2)^2w + a_{113}L(-1)^2L(-3)w \\
    &\quad + a_{1112} L(-1)^3L(-2)w + a_{11111} L(-1)^5 w. 
\end{align*}
Then 
\begin{align*}
    L(2)F(\omega) &= a_{14}\left(3\cdot 5 L(-3) + 6L(-1)L(-2)\right)w \\
    &\quad + a_{122} \left(-9L(-3)+(2+c)L(-1)L(-2)\right)w \\
    &\quad + a_{113}\left(5L(-1)^3+2\cdot 3 \cdot 4 L(-1)L(-2)\right)w \\
    & \quad + a_{1112} \left(\left(15+\frac 1 2 c\right)L(-1)^3 - 3\cdot 2\cdot 2 L(-1)L(-2)\right)w \\
    & \quad + a_{11111} \left(-10\cdot 3\cdot 2\cdot 2 L(-1)^3\right) w, \\
    L(3)F(\omega) &= a_{14}\left(4\cdot 6 L(-2) + 7L(-1)^2\right)w \\
    &\quad + a_{122} \left(5\cdot 3L(-1)^2+4(-16+c)L(-2)\right)w\\
    &\quad + a_{113}\left((22+2c)L(-1)^2\right)w  \\
    &\quad + a_{1112} \left((-36+6c)L(-1)^2 - 4\cdot 3 \cdot 2 L(-2)\right)w \\
    &\quad + a_{11111} \left(-25\cdot 4\cdot 3 \cdot 2 L(-1)^2\right) w. 
\end{align*}
Then from $L(2)F(\omega)\in \text{Im}L(-1)$, we see that 
$$15 a_{14}-9a_{122} = 0.$$
From $L(2)F(\omega)-\frac 1 2 L(-1)L(3)F(\omega)\in \text{Im}L(-1)^2$, we see that 
$$-6a_{14}+(34-c)a_{122}=0 $$
(amazingly the variables $a_{113}, a_{1112}$ and $a_{11111}$ all have zero coefficients). The matrix of this system is degenerate only when $c=188/5$. So for our choice of $c$, the matrix is nondegenerate. Therefore, $a_{14} = a_{122} = 0$, and   
$$F(\omega) = a_{113}L(-1)^2L(-3)w  + a_{1112} L(-1)^3L(-2)w + a_{11111} L(-1)^5 w \in \text{Im}L(-1)^2. $$

\item We note first that such $w_{(1)}$ has to be outside of Im$L(-1)$, as the zero-mode of elements in Im$L(-1)$ is identically zero. 

\noindent\textbf{When $\boldsymbol{h=-1}$,} the only choice of elements in $W_{[1]}$ is (up to a scalar) $L(-2)w$. It can be computed that 
$$(L(-2)w)_0 \omega = \left(-\frac {13}  6 + \frac 1 {12} c\right)L(-1)^3 w. $$
The coefficient of $L(-1)^3 w$ is zero only when $c = 26$ that does not meet our choice. 
So $F(\omega) = a_{111} \left(-\frac {13}  6 + \frac 1 {12} c\right)^{-1} (L(-2)w)_0\omega. $

\noindent\textbf{When $\boldsymbol{h=-2}$,} the only choice of elements in $W_{[1]}$ is (up to a scalar) $L(-3)w$. It can be computed that 
$$(L(-3)w)_0 \omega = -2\left(L(-1)^2L(-2) + \frac 1 {12} L(-1)^4 \left(-8+\frac 1 2 c\right)\right)w.$$
Looking back at $F(
\omega)$, from Lemma \ref{L(2m)-action} we know that $L(4)F(\omega) \in \text{Im} L(-1) = \{0\}$. It can be computed that 
$$L(4)F(\omega) = a_{112}\left(-8+\frac 1 2 c\right) + a_{1111}(-12)=0 \Rightarrow a_{1111} = \frac 1 {12} \left(-8+\frac 1 2 c\right)a_{112}.$$
So that 
$$F(\omega) = a_{112} \left(L(-1)^2 L(-2) + \frac 1 {12} \left(-8+\frac 1 2 c\right)L(-1)^4\right)w = -\frac 1 2 a_{112} (L(-3)w)_0 w. $$
\noindent\textbf{When $\boldsymbol{h=-3}$,} the only choice of elements in $W_{[1]}$ is some linear combination of $L(-4)w$ and $L(-2)^2w$. It can be computed that 
\begin{align*}
    (L(-4)w)_0\omega &= -\frac 5 2 L(-1)^2 L(-3)w + L(-1)^3 L(-2)w \\
    &\quad + \frac 1 {120}(-59+5c)L(-1)^5 w\\
    (L(-2)^2w)_0\omega &= \frac 3 2 L(-1)^2 L(-3)w + \frac 1 6 (-50+c) L(-1)^3L(-2)w \\
    &\quad + \frac 1 {40} (-49+4c)L(-1)^5w. 
\end{align*}
Recall that in (1) we have already obtained that 
$$F(\omega) = a_{113}L(-1)^2L(-3)w + a_{1112}L(-1)^3L(-2)w + a_{11111}L(-1)^5 w. $$
Let $x_1, x_2$ be numbers such that
\begin{align*}
    a_{113} &= -\frac 5 2 x_1 + \frac 3 2 x_2\\
    a_{1112} &= x_1 + \frac 1 6 (-50+c) x_2.
\end{align*}
The linear system is degenerate when 
$$-\frac 5 2 \cdot 1 6 (-50+c) - \frac 3 2 = 0\Rightarrow c > 50$$
Thus our choice of $c$ makes a nondegenerate system. So such $x_1, x_2$ exists uniquely for any fixed $a_{113}$ and $a_{1112}$. Then we know that 
$$F(\omega) + (-x_1 L(-4)w - x_2 L(-2)^2 w)_0 \omega \in \text{Im}L(-1)^4. $$
\end{enumerate}

\end{proof}

Consequently, we conclude the following theorem. 

\begin{thm}\label{Vir-neg-energy-thm}
For $V=L(c,0)$, $c=c_{p,q}$ and $W=L(c,h)$ with $h = h_{m,n}\in \Z, h \geq -3$, $H^1(V, W) = Z^1(V, W)$. 
\end{thm}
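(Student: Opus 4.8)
The plan is to reduce everything to the behavior of a derivation on the conformal vector $\omega$. Since $V = L(c,0)$ is generated by $\omega$, Proposition \ref{Der-gen} (together with Remark \ref{Rmk-2-12}) tells us that a derivation $F:V\to W$ is uniquely determined by $F(\omega)$, and that $F=0$ as soon as $F(\omega)=0$. Thus it suffices, for each admissible integer $h = h_{m,n}\geq -3$, to produce an element $w_{(1)}\in W_{[1]}$ for which the zero-mode derivation $v\mapsto (w_{(1)})_0 v$ coincides with $F$. I would split the argument according to the sign of $h$.

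For the non-negative cases $h\geq 0$ I would simply invoke Theorem \ref{Vir-pos-energy}, which gives $H^1(V,W)=0$. One then checks that $Z^1(V,W)$ also vanishes: for $h>1$ the space $W_{[1]}$ is zero; the value $h=1$ is impossible for $c=c_{p,q}$ (as established in the proof of Theorem \ref{Vir-pos-energy}); and for $h=0$ we have $W=V$ with $V_{(1)}=0$, since $L(-1)\one=0$ in the Virasoro VOA. Hence $H^1(V,W)=0=Z^1(V,W)$ throughout this range.

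The negative-energy cases $h\in\{-1,-2,-3\}$ carry the content, and here I would assemble Proposition \ref{Vir-neg-energy-prop} and Lemma \ref{Image-L(-1)^4}. Given any derivation $F$, part (2) of the proposition supplies $w_{(1)}\in W_{[1]}$ with $F(\omega)-(w_{(1)})_0\omega\in \text{Im}L(-1)^4$. Writing $G$ for the zero-mode derivation $G(v)=(w_{(1)})_0 v$, which lies in $Z^1(V,W)$ by construction, the difference $F-G$ is again a derivation (the defining conditions are linear in $F$), and $(F-G)(\omega)\in \text{Im}L(-1)^4$. Lemma \ref{Image-L(-1)^4} then forces $(F-G)(\omega)=0$, whence $F-G=0$ by Proposition \ref{Der-gen}. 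Therefore $F=G\in Z^1(V,W)$, and since $F$ was arbitrary we conclude $H^1(V,W)=Z^1(V,W)$.

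The genuine difficulty lives entirely in the two ingredients I am quoting rather than in this final assembly. The delicate point is that the linear systems determining the coefficients of $F(\omega)$ in Proposition \ref{Vir-neg-energy-prop} degenerate only at a handful of exceptional central charges (for instance $c=26$, $c=188/5$, or $c>50$ in the respective subcases), and the whole argument rests on the fact that no minimal-model value $c=c_{p,q}$ meets any of them; this, together with the scalar in Lemma \ref{Image-L(-1)^4} being nonzero precisely for $p\geq 4$, is exactly what confines the method to $h\geq -3$. Pushing past $h=-3$ would require controlling $F(\omega)$ modulo higher powers of $L(-1)$ than the present $\text{Im}L(-1)^4$ bound provides, which is the main obstacle to any strengthening.
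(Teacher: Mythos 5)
Your proposal is correct and follows essentially the same route as the paper: for $h\in\{-1,-2,-3\}$ the paper likewise subtracts the zero-mode derivation supplied by Proposition \ref{Vir-neg-energy-prop}(2) and kills the difference with Lemma \ref{Image-L(-1)^4} (which already yields $F-G=0$ on its own, so the extra appeal to Proposition \ref{Der-gen} is harmless redundancy). Your explicit verification that $Z^1(V,W)=0$ in the cases $h\geq 0$ is a small completeness point the paper leaves implicit after Theorem \ref{Vir-pos-energy}, but it is not a different method.
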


\begin{proof}
Let $F: V\to W$ be any derivation. From Proposition \ref{Vir-neg-energy-prop}, we know that for some $w_{(1)}\in W_{[1]}$, $F(\omega)-(w_{(1)})_0\omega\in \text{Im}L(-1)^4$. Consider the map $G: V\to W$ defined by
\begin{align*}
    G(v) = F(v) - (w_{(1)})_0 v.
\end{align*}
Since both $v\mapsto F(v)$ and $v\mapsto (w_{(1)})_0 v$ is a derivation, we know that $G$ is also a derivation, satisfying $G(\omega)\in \text{Im}L(-1)^4$. It then follows from Lemma \ref{Image-L(-1)^4} that $G = 0$. Thus $F(v)= (w_{(1)})_0 v$
\end{proof}


\subsection{The module $L(c,h)$ with $\N$-grading}

In this subsection we consider $W=L(c,h)$ with $\N$-grading. One should note that $H^1(V, W)$ and $Z^1(V, W)$ are different from those with $L(0)$-gradings. 

\begin{thm}\label{Vir-canonical-N-grading}
Let $V = L(c, 0)$ with $c=c_{p,q}$ as in (\ref{Formula-5}). Let $W = L(c, h)$ with $h=h_{m,n}$ as in (\ref{Formula-6}) with the canonical $\N$-grading given by the operator $\d_W = L(0)-h$. Then $H^1(V, W) = 0$. \end{thm}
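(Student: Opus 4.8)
The plan is to reduce everything to the single element $F(\omega)$ and to exploit that the canonical $\N$-grading forces $F(\omega)$ into a very small space. Since $V=L(c,0)$ is generated by $\omega$, Proposition \ref{Der-gen} shows that a derivation $F\colon V\to W$ is determined by $F(\omega)$ and that it suffices to prove $F(\omega)=0$. Let $w$ be a lowest weight vector of $W$, so that $L(n)w=0$ for $n>0$ and $L(0)w=hw$. Because $\d_W=L(0)-h$, the grading condition $F\circ L(0)=\d_W\circ F$ forces $F(\omega)$ to have $L(0)$-weight $h+2$; hence $F(\omega)\in W_{[h+2]}$, which is spanned by the two level-two vectors $L(-1)^2w$ and $L(-2)w$. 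I also note that $Z^1(V,W)=0$ for this grading, since a weight-one element of $W$ lies in $W_{[h+1]}\subseteq \text{Im}\,L(-1)$ and the zero-mode of any element of $\text{Im}\,L(-1)$ vanishes; thus proving $H^1(V,W)=0$ is exactly proving that every derivation is zero.

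Writing $F(\omega)=aL(-1)^2w+bL(-2)w$, I would extract scalar relations on $(a,b)$ by applying the derivation identity $F(\omega_n\omega)=\omega_nF(\omega)+F(\omega)_n\omega$ for the finitely many $n$ with $\omega_n\omega\neq0$. Using the Virasoro brackets together with $L(n)w=0$ for $n>0$ one computes $L(1)F(\omega)=(a(4h+2)+3b)L(-1)w$, $L(2)F(\omega)=(6ah+b(4h+\tfrac c2))w$, and $L(k)F(\omega)=0$ for $k\ge3$; feeding these into the formula for $F(\omega)_i\omega$ makes each relation explicit. The most useful is the $n=1$ relation, which reads $F(\omega)_1\omega=-hF(\omega)$ and stays inside $W_{[h+2]}$: comparing coefficients of $L(-2)w$ and $L(-1)^2w$ gives the homogeneous system $(2h+2)b=0$ and $ha+(2h+\tfrac c4-3)b=0$, whose determinant is proportional to $h(h+1)$. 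The $n=3$ relation lands in the one-dimensional lowest weight space and reduces to the single scalar equation $6ah+b(4h+\tfrac c2)=0$.

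From here the conclusion follows by cases. For $h\neq0,-1$ the $n=1$ system is nonsingular, so $a=b=0$ and $F(\omega)=0$. If $h=0$ then $W=V$ and the statement is Theorem \ref{Vir-pos-energy} (equivalently, $L(-1)\one=0$ kills the $a$-term while the $n=1$ relation kills $b$). If $h=-1$, I would combine the $n=1$ and $n=3$ relations; the resulting $2\times2$ system has determinant proportional to $26-c$, which is nonzero because $c=c_{p,q}=1-6(p-q)^2/pq\le1<26$, so again $a=b=0$. Finally, when $L(-1)^2w$ and $L(-2)w$ are linearly dependent, precisely when $W$ carries a level-two singular vector (i.e.\ $h=h_{1,2}$ or $h=h_{2,1}$), the space $W_{[h+2]}$ is one-dimensional and equal to $\C L(-1)^2w\subseteq\text{Im}\,L(-1)$, and the $n=3$ relation becomes $6ah=0$, forcing $a=0$ since such $h$ is nonzero.

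The routine part is the bracket bookkeeping; the genuine obstacle is controlling the degeneracies. I expect the crux to be verifying, for all admissible $(p,q,m,n)$, that the exceptional scalars ($h(h+1)$, $26-c$, and $h$ itself in the singular-vector case) are nonzero, and that the two level-two vectors are independent except exactly in the singular-vector case. The cleanest way to settle the independence questions uniformly is the Kac determinant argument already used in Proposition \ref{Vir-neg-energy-prop}, applied at conformal weights one and two above the lowest weight.
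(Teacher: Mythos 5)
Your proposal is correct and follows essentially the same route as the paper's proof: reduce to $F(\omega)\in W_{[h+2]}$, expand it in $L(-2)w$ and $L(-1)^2w$, extract the same linear relations from the derivation identity (your $n=1$ and $n=3$ relations are the paper's $F(L(0)\omega)$ and $F(L(1)\omega)$ equations, both of which reduce to $L(2)F(\omega)=0$), and split into the cases $h\neq 0,-1$, $h=-1$, $h=0$, and the degenerate singular-vector case. The only cosmetic deviation is in the degenerate case, where you span $W_{[h+2]}$ by $L(-1)^2w$ (legitimate, since $\ker L(-1)=0$ for $W\not\simeq V$) and close with $6ah=0$, whereas the paper spans it by $L(-2)w$, citing Astashkevich for the nonvanishing of the $L(-1)^2w$-coefficient in the relation, and consequently needs $c\neq 8$ when $h=-1$.
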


\begin{proof}
It suffices to assume that $h\neq 0$. Let $w$ be the lowest weight vector of $W$. Assume that $L(-2)w$ and $L(-1)^2 w$ are linearly independent. Set
$$F(\omega)=aL(-2)w + bL(-1)^2w. $$
Then 
\begin{align*}
    L(1)F(\omega) &= (3a+(4h+2)b)L(-1)w\\
    L(2)F(\omega) &= \left(\left(4h+\frac 1 2 c\right)a + 6hb\right)w.
\end{align*}
From 
$$F(L(0)\omega) = L(0)F(\omega) + F(\omega)_1 \omega = 2L(0)F(\omega) - L(-1)L(1)F(\omega) + \frac 1 2 L(-1)^2L(2)F(\omega), $$
we have
\begin{align*}
    & 2F(\omega) = 2(h+2)F(\omega)-L(-1)L(1)F(\omega)+\frac 1 2 L(-1)^2L(2)F(\omega)\\
    \Rightarrow & 0 = 2(h+1)a L(-2)w + \left(2(h+1)b - 3a - (4h+2)b + \frac 1 2 \left(4h+\frac 1 2c\right)a + 3hb\right)L(-1)^2w
\end{align*}
If $h\neq -1$, then $a=0$. What remains simplifies to $hb=0$. Since $h\neq 0$, $b=0$. On the other hand, if $h=-1$. In this case, the relation simplifies to 
$$b=\left(-5+\frac 1 4 c\right)a.$$
On the other hand, from 
$$F(L(1)\omega) = L(1)\omega + F(\omega)_2\omega = L(-1) L(2)F(\omega)=0, $$
and the fact that $\ker L(-1) = 0$, we see that 
$$\left(-4+\frac 1 2 c\right)a - 6b = 0$$
Together with the previous relation on $a,b$, we conclude that $(26-c)a = 0$. Since $c\neq 26$, we conclude that $a=0$ and thus $b=0$. 

It remains to study the case when $L(-2)w$ and $L(-1)^2w$ are linearly independent. Since the coefficient of $L(-1)^2w$ in the relation is nonzero (see \cite{Ash}), it suffices to consider the case when 
$$F(\omega) = aL(-2)w.$$
We similarly argue that $a=0$ if $h\neq -1$, and in case $h=-1$, we have $(-4+c/2)a=0$. Since $c\neq 8$, we still get $a=0$. 
\end{proof}

\begin{thm}
Let $V = L(c, 0)$ with $c=c_{p,q}$ as in (\ref{Formula-5}). Let $W = L(c, h)$ with $h=h_{m,n}$ as in (\ref{Formula-6}) with arbitrary $\N$-grading. Then $H^1(V, W) = Z^1(V, W)$.
\end{thm}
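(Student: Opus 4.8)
The plan is to run the same case analysis by lowest $\d_W$-weight that settled the affine $\N$-graded theorem, specializing to the fact that $V = L(c,0)$ is generated by the single weight-$2$ vector $\omega$. Writing $\Xi \in \N$ for the lowest $\d_W$-weight, an arbitrary $\N$-grading has the form $\d_W = L(0) - h + \Xi$; the case $\Xi = 0$ is exactly the canonical grading already handled in Theorem \ref{Vir-canonical-N-grading}, so it remains to treat $\Xi \geq 1$. For any such grading a derivation $F$ is determined by $F(\omega)$ via Proposition \ref{Der-gen}, and since $F$ intertwines $L(0)$ with $\d_W$, the image $F(\omega)$ must be a $\d_W$-weight-$2$ vector, i.e. an element of $W_{[2+h-\Xi]}$.

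The two ``easy'' cases are $\Xi \geq 3$ and $\Xi = 2$. For $\Xi \geq 3$ the target weight $2+h-\Xi$ lies strictly below the lowest weight $h$, so $W_{[2+h-\Xi]} = 0$, forcing $F(\omega) = 0$ and hence $F = 0$; since $W$ then has no $\d_W$-weight-$1$ vectors we also have $Z^1(V,W) = 0$, and the identity holds trivially. For $\Xi = 2$ the vector $F(\omega)$ lies in the one-dimensional lowest weight space, say $F(\omega) = aw$; applying $F$ to $\omega_1\omega = 2\omega$ and expanding $F(\omega)_1\omega$ with the mode formula yields the single relation $a(1-h) = 0$. Because $h = 1$ is impossible for $c = c_{p,q}$ (established in Theorem \ref{Vir-pos-energy}), we get $a = 0$, so $F = 0$, and again $Z^1(V,W) = 0$.

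The substantive case is $\Xi = 1$, where $F(\omega) \in W_{[h+1]}$ and the lowest weight space $W_{[h]}$ now carries $\d_W$-weight $1$, so zero-mode derivations genuinely appear. When $h = 0$ we have $W = V$ and $L(-1)\one = 0$, whence $W_{[1]} = 0$ forces $F = 0 = Z^1(V,W)$. When $h \neq 0$, the vector $L(-1)w$ is not singular since $L(1)L(-1)w = 2hw \neq 0$, so $W_{[h+1]} = \C L(-1)w$ is one-dimensional; the zero-mode derivation $F_0(v) = w_0 v$ of the lowest weight vector $w$ satisfies $F_0(\omega) = w_0\omega = (h-1)L(-1)w$, which is nonzero precisely because $h \neq 0,1$. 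Thus $F_0(\omega)$ spans $W_{[h+1]}$, every admissible $F(\omega)$ is a scalar multiple of it, and Proposition \ref{Der-gen} upgrades this agreement on the generator to $F \in Z^1(V,W)$.

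I expect the main obstacle to be concentrated in the $\Xi = 1$ analysis: one must know both that $h \neq 1$ and that $W_{[h+1]}$ is exactly one-dimensional, so that the single zero-mode derivation coming from the lowest weight vector already exhausts the possible images $F(\omega)$. Both inputs are supplied by the minimal-model structure — the former by the arithmetic of $h_{m,n}$ recorded in Theorem \ref{Vir-pos-energy}, the latter by the non-singularity of $L(-1)w$ for $h \neq 0$ — and once they are in place the surviving steps are routine mode computations of the kind carried out above.
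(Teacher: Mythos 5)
Your proposal is correct and follows essentially the same route as the paper: a case split on the lowest $\d_W$-weight $\Xi$, citing Theorem \ref{Vir-canonical-N-grading} for $\Xi=0$, the computation $w_0\omega=(h-1)L(-1)w$ together with $h\neq 0,1$ for $\Xi=1$, the relation forced by $F(\omega_1\omega)=2F(\omega)$ for $\Xi=2$, and triviality for $\Xi\geq 3$. The only differences are cosmetic: you spell out the $\Xi\geq 3$ case and the one-dimensionality of $W_{[h+1]}$ (via $L(1)L(-1)w=2hw\neq 0$), which the paper leaves implicit.
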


\begin{proof}
Let $\Xi$ be the lowest weight of $W$. The $\Xi=0$ case is proved in Theorem \ref{Vir-canonical-N-grading}. 
If $\Xi = 1$, then $F(\omega)=L(-1)w$ for some $w\in W_{[1]}$, which is zero automatically if $W$ is $L(0)$-isomorphic to $V$. In case $W$ is not $L(0)$-isomorphic to $V$, we note that 
$$w_0 \omega = -\omega_0 w + L(-1)\omega_1 w = (h-1)L(-1)w.$$
Since $h\neq 1$, we see that $F$ coincides with the zero-mode derivation defined by $w/(h-1)$. Thus $F$ is a zero-mode derivation. 

If $\Xi = 2$, then $F(\omega) = w$ for some $w\in W_{[2]}$. But then, 
$$2w = F(\omega_1\omega) = \omega_1 F(\omega) + F(\omega)_1 \omega = 2 \omega_1 F(\omega)= 2h w.$$
Since $h\neq 1$, we see that $w=0$. Thus $F = 0$. 
\end{proof}

\section{First cohomologies of lattice VOAs}\label{Section-4}

In this section, we will study the first cohomologies of the lattice VOA associated with an positive definite even lattice. We will compute all derivations using Proposition \ref{Der-gen} and Remark \ref{Rmk-2-12}. More precisely, we first assume that $F$ sends a generator to a linear combination of homogeneous elements of the same weight, then determine the coefficients and show that they coincide with some zero-mode derivations. 

\subsection{Lattice VOA and modules} The lattice VOA was first constructed in \cite{FLM}. The modules were classified in \cite{Dong-lattice}. We briefly review the construction of lattice VOA and modules following \cite{LL}. 

Let $L_0$ be an even lattice of rank $r$, i.e., $L_0$ is a free abelian group of rank $r$ equipped with a positive definite symmetric $\mathbb{Z}$-bilinear form satisfying 
$$\langle \alpha, \alpha\rangle \in 2\Z, \alpha\in L_0.$$
Let $\h = L_0\otimes_\Z \C$ and $L$ be the dual lattice of $L_0$, i.e., 
$$L = \{\alpha\in \h: \langle \alpha, \beta\rangle \in \Z, \forall \beta\in L_0\}. $$
Let $\hat{L}$ be a central extension of $L$ by a finite cyclic group $\langle \kappa \rangle$ of order $s$, i.e., $\hat{L}$ can be fitted into the following short exact sequence of groups:
$$1 \to \langle \kappa \rangle \to \hat{L} \xrightarrow{-} L \to  1, $$
where the map $\hat{L}\to L$ is denoted by $a\mapsto \bar{a}$. Let $e: L \to \hat{L}, \alpha\mapsto e_\alpha$ be a section of the short exact sequence i.e.,  $$ e_0 = 1; \overline{e_\alpha} = \alpha, \alpha\in L.$$
Let $\epsilon_0: L \times L \to \Z/s\Z$ be the corresponding 2-cocycle assoicated with the section $e$, i.e., for $\alpha, \beta\in L$, 
$$e_\alpha e_\beta = \kappa^{\epsilon_0(\alpha,\beta)}e_{\alpha+\beta}. $$
Fix some primitive $s$-th root of unity $\omega_s\in \C$, let $\C_{\omega_s}$ be the one-dimensional vector space on which $\kappa$ acts by the scalar $\omega_s$. Let 
$$\C\{L\} = \C[\hat{L}]\otimes_{\C[\kappa]}\C_{\omega_s}$$
be the induced $\hat{L}$-module. Set $\iota: \hat{L}\to \C\{L\}$ by $\iota(a) = a\otimes 1$. It is known that $\iota$ is an injection, and $\iota(\kappa b) = \omega_s \iota(b)$ is the only linear relations among $\iota(b), b\in \hat{L}$. Thus $\C[L]$ and $\C\{L\}$ are linearly isomorphic and can be (linearly) identified via $\alpha\mapsto \iota(e_\alpha)$. Define $\epsilon: L\times L \to \C^\times$ by 
$$\epsilon(\alpha, \beta) = \omega_s^{\epsilon_0(\alpha, \beta)}e_{\alpha+\beta}, \alpha, \beta\in L.$$
Then the action of $\hat{L}$ on $\C\{L\}$ can be described by
$$e_\alpha\cdot \iota(e_\beta) = \epsilon(\alpha, \beta)\iota(e_{\alpha+\beta}), \kappa \iota(e_\beta) = \omega_s \iota(e_\beta), \alpha, \beta\in L $$

Let $M(1) = S(\hat{\h}_-)$ be the Heisenberg VOA assoicated with $\h$ with level 1. In more detail, we view $\h$ as the abelian Lie algebra and consider its affinization 
$$\hat{h} = \h\otimes \C[t, t^{-1}] \oplus \C k$$
Let $\C \one_\h$ be the one-dimensional vector space on which $h(n)$ acts trivially for $n\geq 0$, and $k$ acts by the scalar 1. $M(1)$ is nothing but the induced module 
$$M(1) = U(\hat{\h})\otimes_{U(\h\otimes \C[t] \oplus \C k)} \C \one_\h $$
together with an appropriately defined vertex operator. To construct the lattice VOA, we consider the vector space
$$V_L = M(1) \otimes \C\{L\}$$
and the vacuum element 
$$\one = \one_\h \otimes 1,$$
together with the following actions
\begin{align*}
    a\in \hat{L}:& v\otimes \iota(e_\alpha) \mapsto v\otimes \iota(a\cdot e_\alpha),\\
    k \in \hat{\h}: & v\otimes \iota(e_\alpha)\mapsto v\otimes \iota(e_\alpha),\\
    h\otimes t^0 \in \hat{\h}: & v\otimes \iota(e_\alpha)\mapsto \langle h, \alpha\rangle v\otimes \iota(e_\alpha), \\
    h\otimes t^n \in \hat{\h}: & v\otimes \iota(e_\alpha)\mapsto h(n)v \otimes \iota(e_\alpha), (n\neq 0), 
\end{align*}
where $h\in \h, v\in M(1). $ So 
$$V_L = span\{a^{(1)}(-n_1)\cdots a^{(m)}(-n_m) \iota(e_\alpha): a^{(1)}, ..., a^{(m)}\in \h, n_1, ..., n_m \in \Z_+ \}$$
For $h\in \h$ and a formal variable $x$, we also define the action of 
$$x^h: v\otimes \iota(e_\alpha)\mapsto x^{\langle h, \alpha\rangle} v\otimes \iota(e_\alpha). $$
Using these actions, we define the following fundamental vertex operator associated with $\alpha\in L$ ($\subset \h$):
$$Y(\iota(e_\alpha), x) = \exp\left(-\sum_{n<0}\frac{\alpha(n)}{n} x^{-n}\right)\exp\left(-\sum_{n>0}\frac{\alpha(n)}{n}x^{-n}\right)e_\alpha x^{\alpha}.$$
The vertex operator for general elements is given by 
\begin{align*}
    & Y(a^{(1)}(-n_1)\cdots a^{(m)}(-n_m) \iota(e_\alpha), x)\\
    & \quad = \nord \frac 1{(n_1-1)!} \frac{d^{n_1-1}}{dx^{n_1-1}}a^{(1)}(x) \cdots \frac 1{(n_m-1)!} \frac{d^{n_m-1}}{dx^{n_m-1}}a^{(m)}(x) Y(\iota(e_\alpha), x)\nord 
\end{align*}
For any subset $E\subset L$, let 
$$\hat{E} = \{a\in \hat{L}: \bar a \in E\}, \C\{E\} = span\{\iota(a): \bar a \in E\}$$
and 
$$V_{E} = M(1) \otimes \C\{E\}\subset V_L.$$
Particularly interesting cases includes $E=L_0$ and $E=\gamma + L_0$ for some $\gamma\in L$, giving correspondingly $V_{L_0}$ and $V_{\gamma+L_0}$. 

It follows from \cite{LL} Section 6.4, 6.5, and \cite{DLM-Regular} (see also \cite{Dong-lattice}) that
\begin{enumerate}
    \item $V_{L_0}$ forms a VOA generated by
    $$a(-1)\one, a\in \h; \iota(e_\alpha), \alpha\in L_0. $$
    For $a^{(1)}, ... a^{(r)}\in \h, \alpha\in L$, 
    $$\wt\left(a^{(1)}(-n_1)\cdots a^{(m)}(-n_m) \iota(e_\alpha)\right) = n_1 + \cdots + n_m + \frac{\langle \alpha, \alpha\rangle}{2}.$$
    \item Up to isomorphisms of VOAs, $V_{L_0}$ is independent of the choice of $s\in \Z_+$, of the choice of $\omega_s$, and of the central extension of $L$. We can pick $s=2$, $\omega_s = -1$ and the central extension with a section satisfying $$\epsilon(\alpha, \beta) / \epsilon (\beta, \alpha) = (-1)^{\langle \alpha, \beta\rangle}. $$
    \item $V_L$ forms a $V_{L_0}$-module. Any irreducible $V_{L_0}$-module is isomorphic to $V_{\gamma+L_0}\subset V_L$ for some $\gamma\in L$. Let $\gamma_1, ..., \gamma_s\in L$ be a choice of representatives of $L/L_0$. Then the modules $V_{\gamma_1+L_0}, ..., V_{\gamma_s+L_0}$ form the set of equivalent classes of irreducible $V_{L_0}$-modules. 
    \item Every weak $V_{L_0}$-module is a direct sum of irreducible $V_{L_0}$-modules. 
\end{enumerate}

\subsection{The algebra $V_{L_0}$} 

\begin{lemma}
Let $\alpha_1, ..., \alpha_r \in L_0$ be a $\Z$-basis for $L_0$. Then $V_{L_0}$ is generated by $\alpha_i(-1)\one$ and $\iota(e_{\alpha_i})$, $i=1, ..., r$. 
\end{lemma}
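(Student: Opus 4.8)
The plan is to work from the explicit spanning set recorded above,
$$V_{L_0}=\mathrm{span}\{a^{(1)}(-n_1)\cdots a^{(m)}(-n_m)\iota(e_\alpha):a^{(j)}\in\h,\ n_1,\dots,n_m\in\Z_+,\ \alpha\in L_0\},$$
and to show that every such monomial lies in the sub-VOA $U$ generated by the proposed elements. I would split the verification into the charge-zero (Heisenberg) part and the production of each ground state $\iota(e_\alpha)$, and then combine the two.

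First I would dispose of the Heisenberg/Fock part. Since $\alpha_1,\dots,\alpha_r$ is a $\Z$-basis of $L_0$, it is in particular a $\C$-basis of $\h=L_0\otimes_\Z\C$; as $a\mapsto a(-1)\one$ is $\C$-linear, every $a(-1)\one$ is a linear combination of the $\alpha_i(-1)\one$ and hence lies in $U$. The modes of $a(-1)\one$ are precisely the Heisenberg operators $a(n)$, $n\in\Z$, so $U$ is stable under all $a(n)$; applying $a^{(1)}(-n_1)\cdots a^{(m)}(-n_m)$ to $\one=\iota(e_0)\in U$ then puts the whole Fock space over $\iota(e_0)$ inside $U$. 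More importantly, once a given $\iota(e_\alpha)$ is known to lie in $U$, the same stability under the $a(n)$ immediately forces every Heisenberg descendant $a^{(1)}(-n_1)\cdots a^{(m)}(-n_m)\iota(e_\alpha)$ into $U$. Thus it remains only to produce the ground states $\iota(e_\alpha)$ for all $\alpha\in L_0$.

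The engine for that is a leading-term computation with the fundamental vertex operator. Using $\beta(n)\iota(e_\gamma)=0$ for $n>0$ one finds
$$Y(\iota(e_\beta),x)\iota(e_\gamma)=\epsilon(\beta,\gamma)\,x^{\langle\beta,\gamma\rangle}\exp\Big(-\sum_{n<0}\tfrac{\beta(n)}{n}x^{-n}\Big)\iota(e_{\beta+\gamma}),$$
whose lowest-order term in $x$ is $\epsilon(\beta,\gamma)\,x^{\langle\beta,\gamma\rangle}\iota(e_{\beta+\gamma})$. Reading off the corresponding coefficient gives $(\iota(e_\beta))_{-\langle\beta,\gamma\rangle-1}\iota(e_\gamma)=\epsilon(\beta,\gamma)\iota(e_{\beta+\gamma})$, and since $\epsilon(\beta,\gamma)\in\C^\times$ this recovers $\iota(e_{\beta+\gamma})$ from $\iota(e_\beta)$ and $\iota(e_\gamma)$ up to a nonzero scalar. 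I would then induct on $\sum_i|n_i|$ for $\alpha=\sum_i n_i\alpha_i$: peeling off one generator $\iota(e_{\pm\alpha_i})$ at a time via this formula produces $\iota(e_\alpha)\in U$ for every $\alpha\in L_0$, after which the Heisenberg step finishes the argument.

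The main obstacle, and a point I would flag explicitly, is that the vertex operators strictly preserve the $L_0$-grading by charge: for $u$ of charge $\mu$ every mode $u_n$ shifts charge by $\mu$, so the charges occurring in $U$ form the \emph{submonoid} of $L_0$ generated by the charges of the generators. The charges $\alpha_1,\dots,\alpha_r$ by themselves generate only the positive cone $\N\alpha_1+\cdots+\N\alpha_r$, which omits the negative lattice vectors; equivalently, inside the weight-one Lie algebra the elements $\iota(e_{\alpha_i})$ together with $\alpha_i(-1)\one$ span only a Borel-type subalgebra and never the opposite root vectors. Consequently the induction above genuinely needs $\iota(e_{-\alpha_i})$, and I would read the generating set as also containing the negative exponentials $\iota(e_{\pm\alpha_i})$, so that the generating charges span $L_0$ as a monoid. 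With that reading the induction reaches all of $L_0$, and combined with the Heisenberg step yields $U=V_{L_0}$.
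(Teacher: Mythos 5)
Your proposal follows the same two-step route as the paper's own proof: the fusion computation showing that the lowest coefficient of $Y(\iota(e_\alpha),x)\iota(e_\beta)$ is $\epsilon(\alpha,\beta)\iota(e_{\alpha+\beta})$ with $\epsilon(\alpha,\beta)\neq 0$, followed by an application of Heisenberg modes to fill out each charge component. The difference is the obstruction you flag, and you are right to flag it: it is a genuine gap in the lemma as stated, which the paper's proof passes over. Since every mode of a vector of charge $\mu$ shifts the $L_0$-charge grading by exactly $\mu$, the vertex subalgebra generated by $\one$, the $\alpha_i(-1)\one$ (charge $0$) and the $\iota(e_{\alpha_i})$ (charge $\alpha_i$) is contained in $\bigoplus_{\gamma\in C}M(1)\otimes\C\iota(e_\gamma)$, where $C=\N\alpha_1+\cdots+\N\alpha_r$ is the submonoid of $L_0$ generated by the charges of the generators; because the $\alpha_i$ form a $\Z$-basis, $C$ contains no $-\alpha_i$, so this is a \emph{proper} subalgebra. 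Concretely, for $L_0=\Z\alpha$ with $\langle\alpha,\alpha\rangle=2$, so that $V_{L_0}\cong L_{\widehat{\mathfrak{sl}_2}}(1,0)$, the elements $\alpha(-1)\one$ and $\iota(e_\alpha)$ generate only the Borel-type subalgebra $\bigoplus_{n\geq 0}M(1)\otimes\C\iota(e_{n\alpha})$ and never reach $f=\iota(e_{-\alpha})$. The paper's concluding sentence (``Therefore $\iota(e_{\alpha_1}),\dots,\iota(e_{\alpha_r})$, together with $h(-1)\one$, generates $V_{L_0}$'') is exactly where this is overlooked: the fusion step only walks forward in the cone. Your repair --- enlarging the generating set to $\iota(e_{\pm\alpha_i})$ --- is the minimal correct fix, and with it your induction on $\sum_i|n_i|$ does reach every $\alpha\in L_0$, so your argument is complete.

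One downstream remark worth recording: Theorems \ref{lattice-L(0)-thm} and \ref{lattice-canonical-N-grading} invoke this lemma to conclude that a derivation vanishing on $h(-1)\one$ and the $\iota(e_{\alpha_i})$ is zero, so with the corrected generating set one must also kill $F_1(\iota(e_{-\alpha_i}))$. This is easily supplied: the same $h(0)$-eigenvalue comparison run for $-\alpha_i$ gives $F_1(\iota(e_{-\alpha_i}))=z_{-\alpha_i}\iota(e_{-\alpha_i})$, and applying the derivation property to $\iota(e_{\alpha_i})_{\langle\alpha_i,\alpha_i\rangle-1}\iota(e_{-\alpha_i})=\epsilon(\alpha_i,-\alpha_i)\one$ together with $F_1(\one)=0$ forces $z_{-\alpha_i}=-z_{\alpha_i}$, after which the zero-mode correction by $\sum_j z_{\alpha_j}\alpha_j^\vee(-1)\one$ annihilates $\iota(e_{\alpha_i})$ and $\iota(e_{-\alpha_i})$ simultaneously; in Theorem \ref{lattice-canonical-N-grading} the eigenvalue comparison is uniform in the lattice element and applies to $-\alpha_i$ verbatim. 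So the main theorems survive, but only after the generating set, and hence this lemma, is corrected as you propose.
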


\begin{proof}
Note that for any $\alpha, \beta\in L_0$, 
\begin{align*}
    Y(\iota(e_\alpha), x) \iota(e_\beta) &= \exp\left(-\sum_{n<0}\frac{\alpha(n)}{n}x^{-n}\right)e_\alpha x^{\alpha} \cdot \iota(e_\beta)\\
    &= \epsilon(\alpha,\beta)x^{\langle \alpha, \beta\rangle}\exp\left(-\sum_{n<0}\frac{\alpha(n)}{n}x^{-n}\right)\iota(e_{\alpha+\beta})
\end{align*}
The coefficient of the lowest power of $x$ is precisely $\epsilon(\alpha, \beta)\iota(e_{\alpha+\beta})$, where  $\epsilon(\alpha, \beta) \neq 0$. Thus $\iota(e_{\alpha+\beta})$ is generated by $\iota(e_\alpha)$ and $\iota(e_\beta)$. Therefore, $\iota(e_{\alpha_1}), ..., \iota(e_{\alpha_r})$, together with $h(-1)\one, h\in \h$, generates $V_{L_0}$. The conclusion follows by noticing that $h(-1)\one$ is a linear combinations of $\alpha_1(-1)\one, ..., \alpha_r(-1)\one$. 
\end{proof}

\begin{thm}\label{lattice-L(0)-thm}
Let $V_{L_0}$ be the lattice VOA associated with a positive definite even lattice. Let $F: V_{L_0}\to V_{L_0}$ be a derivation. Then $F$ is a zero-mode derivation. Consequently, $H^1(V_{L_0}, V_{L_0}) = Z^1(V_{L_0}, V_{L_0})$. 
\end{thm}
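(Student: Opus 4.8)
The plan is to exploit that $V_{L_0}$ is generated by the finite set $\{\alpha_i(-1)\one,\ \iota(e_{\alpha_i}): i=1,\dots,r\}$ coming from a $\Z$-basis of $L_0$ (the preceding Lemma), so that by Proposition \ref{Der-gen} the derivation $F$ is determined by its values on these generators, and it suffices to adjust $F$ by a zero-mode derivation until it annihilates every generator. Throughout I use that $W=V$: skew-symmetry gives $Y_{WV}^W(u,x)v=Y(u,x)v$, so the derivation identity becomes the symmetric $F(Y(u,x)v)=Y(u,x)F(v)+Y(F(u),x)v$. In particular $F$ preserves the $L(0)$-grading, hence restricts to a Lie-algebra derivation of the (reductive) weight-one algebra $V_{(1)}=\h\oplus\bigoplus_{\langle\mu,\mu\rangle=2}\C\iota(e_\mu)$ under $[u,v]=u_0v$.

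First I would pin down $F$ on the Heisenberg generators. Writing $\phi(a)=F(a(-1)\one)\in V_{(1)}$ and feeding the abelian relation $a(0)b(-1)\one=0$ into the derivation identity yields the cocycle identity $a(0)\phi(b)=b(0)\phi(a)$. Decomposing $V_{(1)}$ into $\h$-weight spaces (the action being $a\mapsto a(0)$), the nonzero-weight components are forced to be coboundaries $\phi_\mu(a)=a(0)c_\mu$ with $c_\mu\in\C\iota(e_\mu)$; adjusting $F$ by the zero-mode derivation attached to $\sum_\mu c_\mu$ I may assume $F(a(-1)\one)\in\h$ for all $a$, say $F(a(-1)\one)=\psi(a)(-1)\one$ with $\psi\colon\h\to\h$ linear. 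To remove this remaining weight-zero ambiguity I would apply the derivation identity to $a(0)\iota(e_{\alpha_i})=\langle a,\alpha_i\rangle\iota(e_{\alpha_i})$; comparing $\h$-weights shows both that $F(\iota(e_{\alpha_i}))$ has pure $\h$-weight $\alpha_i$ (hence equals $c_i\iota(e_{\alpha_i})$ for a scalar, for $L(0)$-weight reasons) and that $\langle\psi(a),\alpha_i\rangle=0$ for all $a,i$. Since the $\alpha_i$ span $\h$ and the form is nondegenerate, $\psi=0$.

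Once $F$ vanishes on $\h$, the mode form $F(\alpha(n)v)=\alpha(n)F(v)+F(\alpha(-1)\one)_nv$ shows $F$ commutes with the entire Heisenberg action, so $F$ acts as the scalar $c_\gamma$ on each charge sector $M(1)\otimes\C\iota(e_\gamma)$. Applying $F$ to the product $Y(\iota(e_\alpha),x)\iota(e_\beta)=\epsilon(\alpha,\beta)\,x^{\langle\alpha,\beta\rangle}\exp\!\big(-\sum_{n<0}\tfrac{\alpha(n)}{n}x^{-n}\big)\iota(e_{\alpha+\beta})$ — using that $F$ commutes with the creation operators in the exponential and the symmetric derivation identity — gives $c_{\alpha+\beta}=c_\alpha+c_\beta$, so $\gamma\mapsto c_\gamma$ is additive on $L_0$. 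By nondegeneracy it is represented as $c_\gamma=\langle t,\gamma\rangle$ for a unique $t\in\h$, and the zero-mode derivation attached to $t(-1)\one$, namely $v\mapsto t(0)v$, agrees with $F$ on all generators; Proposition \ref{Der-gen} forces equality. Re-adding the zero-mode derivation subtracted earlier and using that $Z^1(V,W)$ is a linear space shows the original $F$ is a zero-mode derivation.

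The main obstacle is the weight-zero part, i.e.\ establishing $\psi=0$. The purely Lie-theoretic (Whitehead-type) input handles only the root directions of $V_{(1)}$ and leaves an entire endomorphism algebra of the toral part $\h$ undetermined; resolving it genuinely requires the exponential lattice vertex operators acting through the generators $\iota(e_{\alpha_i})$ (whose norms may exceed $2$, so they need not even lie in $V_{(1)}$), and the $\h$-weight bookkeeping there is the crux of the argument.
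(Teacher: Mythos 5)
Your proof is correct and follows essentially the same route as the paper's: pin down $F$ on the generators $\alpha_i(-1)\one$, $\iota(e_{\alpha_i})$ by feeding the relations $h_1(0)h(-1)\one=0$ and $h(0)\iota(e_{\alpha_i})=\langle h,\alpha_i\rangle\iota(e_{\alpha_i})$ into the derivation identity, subtract zero-mode derivations attached to norm-$2$ vectors $\iota(e_\mu)$ and then to a toral weight-one element, and finish with Proposition \ref{Der-gen}. Your $\h$-weight (charge) bookkeeping versus the paper's explicit coefficient comparison, and your extra additivity step $c_{\alpha+\beta}=c_\alpha+c_\beta$ via the exponential formula, are only cosmetic differences --- the paper bypasses the latter by defining the toral element $t=\sum_i c_{\alpha_i}\alpha_i^\vee$ directly from the values on the basis generators.
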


\begin{proof}
Fix any $h\in \h$. Since $F$ commutes with $L(0)$, $F(h(-1)\one)$ is of conformal weight 1. Thus we write
$$F(h(-1)\one) = \sum_{i=1}^r x_i(h) \alpha_i(-1)\one + \sum_{\alpha\in L_0, \langle \alpha,\alpha\rangle = 2} y_\alpha(h) \iota(e_\alpha). $$
where $x_i, y_\alpha\in \h^*$ for each $i=1, ..., r$ and $\alpha\in L_0, \langle \alpha,\alpha\rangle = 2$. 
Since $F$ is a derivation, for any $h, h_1\in \h$, we have
$$F(h_1(0)h(-1)\one ) = h_1(0)F(h(-1)\one) + F(h_1(-1)\one)_0 h(-1)\one $$
This is to say
$$0 = \sum_{\alpha\in L_0, \langle \alpha,\alpha\rangle = 2} \left(y_\alpha(h) \langle h_1, \alpha\rangle - y_\alpha(h_1) \langle h, \alpha\rangle \right) \iota(e_\alpha)$$
Thus for each index $\alpha$, 
$$y_\alpha(h)\langle h_1, \alpha\rangle = y_\alpha (h_1)\langle h, \alpha\rangle. $$
Since there are only finitely many $\alpha$ in the sum, there exists $h_1\in \h$ such that $\langle h_1, \alpha\rangle \neq 0$ for every $\alpha\in L_0, \langle \alpha, \alpha\rangle = 2$. Let 
$$t_\alpha = \frac{y_\alpha(h_1)}{\langle h_1, \alpha\rangle}. $$
Then for every $h\in h$ and every $\alpha\in L_0, \langle \alpha, \alpha \rangle = 2$, $$y_\alpha(h) = \langle h, \alpha\rangle t_\alpha. $$
Let $F_1: V\to V$ be the linear map defined by
$$F_1(v) = F(v) + \sum_{\alpha\in L_0, \langle \alpha,\alpha\rangle = 2} t_\alpha \iota(e_\alpha)_0 v. $$
$F_1$ is also a derivation since for each $\alpha\in L_0, \langle \alpha, \alpha\rangle = 2$, the map $v\mapsto \iota(e_\alpha)_0 v$ is the zero-mode derivation assoicated with the weight-1 element $\iota(e_\alpha)$. Then 
\begin{align*}
F_1(h(-1)\one) &= F(h(-1)\one) + \sum_{\alpha\in L_0, \langle \alpha,\alpha\rangle = 2} t_\alpha \iota(e_\alpha)_0 h(-1)\one \\
&= \sum_{i=1}^r x_i(h)\alpha_i(-1)\one + \sum_{\alpha\in L_0, \langle\alpha,\alpha\rangle = 2}\left( y_\alpha(h) - t_\alpha \langle h, \alpha\rangle \right)\iota(e_\alpha)\\
&= \sum_{i=1}^r x_i(h)\alpha_i(-1)\one. 
\end{align*}
For convenience, we set $\tilde{h}=\sum_{i=1}^r x_i(h)\alpha_i \in \h$. So that $F_1(h(-1)\one) = \tilde{h}(-1). $

Since $F_1$ commutes with $L(0)$, for every $i = 1, ..., r$, $F_1(\iota(e_{\alpha_i}))$ is of weight $\langle \alpha_i, \alpha_i\rangle/2$. Thus
\begin{align*}
    & F_1(\iota(e_{\alpha_i})) \\
    & = \sum_{\substack{\alpha\in L_0, \\ \langle \alpha,\alpha\rangle = \langle \alpha_i, \alpha_i\rangle}} z_\alpha \iota(e_\alpha) \\
    & +   \sum_{\substack{\beta\in L_0, s_1, ..., s_r \in \N,\\ k_1^{(1)}, ..., k_{s_1}^{(1)}, ..., k_1^{(r)}, ..., k_{s_r}^{(r)}\in \N\\ \sum_{p=1}^r \sum_{j=1}^{s_i} j\cdot k_j^{(p)} +  \frac 1 2 \langle \beta, \beta\rangle = \frac 1 2 \langle \alpha_i, \alpha_i\rangle}}& & u_{k_1^{(1)}, ..., k_{s_1}^{(1)}, ..., k_1^{(r)}, ..., k_{s_r}^{(r)}, \beta} \\
    & &\cdot & \alpha_1(-1)^{k_1^{(1)}}\cdots \alpha_1(-s_1)^{k_{s_1}^{(1)}}\cdots \alpha_r(-1)^{k_1^{(r)}}\cdots \alpha_r(-s_r)^{k_{s_r}^{(r)}}\iota(e_\beta). 
\end{align*}
for some $z_\alpha\in \C$ and $u_{k_1^{(1)}, ..., k_{s_1}^{(1)}, ..., k_1^{(r)}, ..., k_{s_r}^{(r)}, \beta}\in \C$.

Since $F_1$ is a derivation, for any $h\in \h$, we have
$$F(h(0) \iota(e_{\alpha_i})) = h(0) F(\iota(e_{\alpha_i})) + F(h(-1)\one)_0 \iota(e_{\alpha_i})$$
This is to say that
\begin{align*}
    & \sum_{\substack{\alpha\in L_0, \\ \langle \alpha,\alpha\rangle = \langle \alpha_i, \alpha_i\rangle}}  \langle h, \alpha_i\rangle \cdot z_\alpha \iota(e_\alpha) \\
    & +  \sum_{\substack{\beta\in L_0, s_1, ..., s_r \in \N,\\ k_1^{(1)}, ..., k_{s_1}^{(1)}, ..., k_1^{(r)}, ..., k_{s_r}^{(r)}\in \N\\ \sum_{p=1}^r \sum_{j=1}^{s_i} j\cdot k_j^{(p)} +  \frac 1 2 \langle \beta, \beta\rangle = \frac 1 2 \langle \alpha_i, \alpha_i\rangle}}& & \langle h, \alpha_i\rangle \cdot u_{k_1^{(1)}, ..., k_{s_1}^{(1)}, ..., k_1^{(r)}, ..., k_{s_r}^{(r)}, \beta} \\
    & &\cdot & \alpha_1(-1)^{k_1^{(1)}}\cdots \alpha_1(-s_1)^{k_{s_1}^{(1)}}\cdots \alpha_r(-1)^{k_1^{(r)}}\cdots \alpha_r(-s_r)^{k_{s_r}^{(r)}}\iota(e_\beta)\\ 
    = &  \sum_{\substack{\alpha\in L_0, \\ \langle \alpha,\alpha\rangle = \langle \alpha_i, \alpha_i\rangle}}\langle h, \alpha\rangle\cdot  z_\alpha \iota(e_\alpha) \\
    & +   \sum_{\substack{\beta\in L_0, s_1, ..., s_r \in \N,\\ k_1^{(1)}, ..., k_{s_1}^{(1)}, ..., k_1^{(r)}, ..., k_{s_r}^{(r)}\in \N\\ \sum_{p=1}^r \sum_{j=1}^{s_i} j\cdot k_j^{(p)} +  \frac 1 2 \langle \beta, \beta\rangle = \frac 1 2 \langle \alpha_i, \alpha_i\rangle}}& & \langle h, \beta \rangle\cdot  u_{k_1^{(1)}, ..., k_{s_1}^{(1)}, ..., k_1^{(r)}, ..., k_{s_r}^{(r)}, \beta} \\
    & &\cdot & \alpha_1(-1)^{k_1^{(1)}}\cdots \alpha_1(-s_1)^{k_{s_1}^{(1)}}\cdots \alpha_r(-1)^{k_1^{(r)}}\cdots \alpha_r(-s_r)^{k_{s_r}^{(r)}}\iota(e_\beta) \\ 
    & + \langle \tilde{h}, \alpha_i\rangle \iota(e_{\alpha_i})
\end{align*} 
Comparing the coefficient of $\iota(e_{\alpha_i})$ on both sides, we see that $\langle \tilde{h}, \alpha_i\rangle = 0$ for every $h\in \h$ and every $i = 1,..., r$. This implies that $\tilde{h}=0$ for every $h\in \h$. Thus $F_1(h(-1)\one) = 0$. And only the first four lines remain. 
Comparing the coefficients of other terms, we see that 
\begin{enumerate}
    \item For every $\alpha\neq \alpha_i, \langle \alpha, \alpha\rangle = \langle \alpha_i, \alpha_i \rangle, $
    $$\langle h, \alpha_i\rangle z_\alpha = \langle h, \alpha\rangle z_\alpha$$
    holds for every $h\in \h$. This is only possible when $z_\alpha = 0. $
    \item For every
    $\beta\in L_0, \langle \beta, \beta\rangle < \langle \alpha, \alpha\rangle$ and every possible choice of indices $k_1^{(1)}, ..., k_{s_1}^{(1)},$ $ ..., k_1^{(r)}, ..., k_{s_r}^{(r)}\in \N$, 
    $$\langle h, \alpha_i\rangle u_{k_1^{(1)}, ..., k_{s_1}^{(1)}, ..., k_1^{(r)}, ..., k_{s_r}^{(r)}, \beta} = \langle h, \beta\rangle u_{k_1^{(1)}, ..., k_{s_1}^{(1)}, ..., k_1^{(r)}, ..., k_{s_r}^{(r)}, \beta}$$
    holds for any $h\in \h$. This is possible only when $u_{k_1^{(1)}, ..., k_{s_1}^{(1)}, ..., k_1^{(r)}, ..., k_{s_r}^{(r)}, \beta} = 0$. 

\end{enumerate}

Thus we managed to show that 
$$F_1(h(-1)\one) = 0, F_1(\iota(e_{\alpha_i}))=z_{\alpha_i}\iota(e_{\alpha_i}), $$
for $h\in \h, i = 1, ..., r.$

Let $F_2: V \to V$ be the linear map defined by 
$$F_2(v) = F_1(v) - \sum_{i=1}^r z_{\alpha_i}(\alpha_i^\vee(-1)\one)_0 v$$
$F_2$ is a derivation since for every $i = 1, ..., r$, the map $v\mapsto (\alpha_i(-1)^\vee)_0 v$ is the zero-mode derivation assoicated with the weight-1 element $\alpha_i^\vee(-1)\one$. Then 
$$F_2(h(-1)\one) = 0, F_2(\iota(e_{\alpha_i})) = 0.$$
So $F_2$ is a derivation sending every generator of $V_{L_0}$ to zero. Thus $F_2 = 0$. In other words, 
$$0 = F_1(v) - \sum_{i=1}^r z_{\alpha_i}(\alpha_i^\vee(-1)\one)_0 v = F(v) + \sum_{\alpha\in L_0, \langle \alpha,\alpha\rangle = 2} x_\alpha\iota(e_\alpha)_0 v - \sum_{i=1}^r z_{\alpha_i}(\alpha_i^\vee(-1)\one)_0 v.$$
So $F$ is a zero-mode derivation. 
\end{proof}

\begin{rema}
For the lattice VOA $V_{L_0}$ assoicated with a positive definite even lattice, the associated Zhu's algebra $A_0(V_{L_0})$ is studied in \cite{DLM-Zhu-lattice}. If the dual lattice $L$ of $L_0$ contains a $\Z$-basis consisting of lattice points outside of $L_0$, then we can show that $O(V_L)$ contains no homogeneous elements of conformal weight 1. In this case, using the conclusions in Proposition \ref{Prop-2-23} and Corollary \ref{Corollary-1-22}, we can show that $w$ must be a linear combination of weight-1 elements. The computation is much less technical than here. However, the assumption generally does not hold: if $L_0$ is a unimodular lattice (e.g. $E_8$ or Leech lattice), from the representation theory of $A(V_{L_0})$ discussed in \cite{DLM-Zhu-lattice}, one sees that $A_0(V_L)=\C\one$ with $O_0(V_{L_0})$ containing all elements except the vacuum $\one$. The structure of $A_0(V_{L_0})$ is too trivial to give any useful information (cf. Remark \ref{Rmk-1-23}).  
\end{rema}

\subsection{The module $V_{\gamma+L_0}$ with $L(0)$-grading}

\begin{thm}
Fix an arbitrary $\gamma\in L, \gamma\notin L_0$. Then derivation $F: V_{L_0}\to V_{\gamma+L_0}$ is a zero-mode derivation. Consequently, $H^1(V_{L_0}, V_{\gamma+L_0}) = Z^1(V_{L_0}, V_{\gamma+L_0})$. 
\end{thm}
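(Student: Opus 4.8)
The plan is to follow the skeleton of the proof of Theorem \ref{lattice-L(0)-thm}, but the fact that $\gamma+L_0$ is a coset genuinely different from $L_0$ will shorten the argument substantially. First I would pin down the weight-one subspace $W_{[1]}$ of $W=V_{\gamma+L_0}$. A general spanning vector $a^{(1)}(-n_1)\cdots a^{(m)}(-n_m)\iota(e_\beta)$ has weight $n_1+\cdots+n_m+\langle\beta,\beta\rangle/2$ with $\beta\in\gamma+L_0$, and since $\gamma\notin L_0$ forces $0\notin\gamma+L_0$, there is no oscillator state $h(-1)\iota(e_0)$ of weight one. Hence $W_{[1]}=\mathrm{span}\{\iota(e_\beta):\beta\in\gamma+L_0,\ \langle\beta,\beta\rangle=2\}$, with the $\iota(e_\beta)$ linearly independent. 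Because $F$ preserves the $L(0)$-grading, $F(h(-1)\one)$ lies in this space, so I may write $F(h(-1)\one)=\sum_\beta y_\beta(h)\iota(e_\beta)$ with $y_\beta\in\h^*$.

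Next I would impose the derivation identity on the relation $h_1(0)h(-1)\one=0$. Using the formula for $w_0v$ together with $h(1)\iota(e_\beta)=0$ one gets $(\iota(e_\beta))_0 h(-1)\one=-\langle h,\beta\rangle\iota(e_\beta)$, and comparing coefficients of the independent vectors $\iota(e_\beta)$ yields $y_\beta(h)\langle h_1,\beta\rangle=y_\beta(h_1)\langle h,\beta\rangle$ for every $\beta$, exactly as in Theorem \ref{lattice-L(0)-thm}. This forces $y_\beta(h)=t_\beta\langle h,\beta\rangle$ for scalars $t_\beta$. Subtracting the zero-mode derivation $v\mapsto\sum_\beta t_\beta(\iota(e_\beta))_0v$ then produces a derivation $F_1$ with $F_1(h(-1)\one)=0$ for all $h\in\h$.

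The decisive simplification comes in handling the remaining generators $\iota(e_{\alpha_i})$. Taking the zero-mode of the derivation identity for $u=h(-1)\one$ and $v=\iota(e_{\alpha_i})$, and using $F_1(h(-1)\one)=0$, I obtain $\langle h,\alpha_i\rangle F_1(\iota(e_{\alpha_i}))=h(0)F_1(\iota(e_{\alpha_i}))$ for every $h\in\h$. Decomposing $F_1(\iota(e_{\alpha_i}))$ according to its $\iota(e_\beta)$-factors, on which $h(0)$ acts by the scalar $\langle h,\beta\rangle$ uniformly across all oscillator descendants, any nonzero component would require $\langle h,\alpha_i-\beta\rangle=0$ for all $h$, i.e. $\beta=\alpha_i$; but $\alpha_i\in L_0$ while $\beta\in\gamma+L_0$ are disjoint cosets, so no such $\beta$ occurs and $F_1(\iota(e_{\alpha_i}))=0$. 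Thus $F_1$ annihilates all generators, so $F_1=0$ by Proposition \ref{Der-gen}, and $F(v)=-\sum_\beta t_\beta(\iota(e_\beta))_0v$ is a zero-mode derivation, giving $H^1(V_{L_0},V_{\gamma+L_0})=Z^1(V_{L_0},V_{\gamma+L_0})$. I do not expect a serious obstacle here: the only points needing care are verifying that $W_{[1]}$ carries no oscillator part (the first place $\gamma\notin L_0$ is used) and the short mode computation of $(\iota(e_\beta))_0h(-1)\one$. The coset-disjointness that makes the images of the $\iota(e_{\alpha_i})$ vanish automatically is the conceptual heart of the argument and is precisely what trivializes the step that was laborious in the $W=V_{L_0}$ case.
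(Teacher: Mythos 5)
Your proposal is correct and follows essentially the same route as the paper's own proof: write $F(h(-1)\one)$ in the weight-one space $\mathrm{span}\{\iota(e_\beta):\beta\in\gamma+L_0,\ \langle\beta,\beta\rangle=2\}$, use the relation $h_1(0)h(-1)\one=0$ to get $y_\beta(h)\langle h_1,\beta\rangle=y_\beta(h_1)\langle h,\beta\rangle$, correct $F$ by a zero-mode derivation so that it kills $h(-1)\one$, and then use $h(0)$-eigenvalue comparison together with the disjointness of the cosets $L_0$ and $\gamma+L_0$ to force $F_1(\iota(e_{\alpha_i}))=0$, whence $F_1=0$ by the generation argument. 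The only nitpick is a sign convention: since $(\iota(e_\beta))_0h(-1)\one=-\langle h,\beta\rangle\iota(e_\beta)$, the zero-mode derivation with coefficients $t_\beta$ should be \emph{added} (equivalently, subtract it with $t_\beta$ replaced by $-t_\beta$), a triviality that does not affect the argument.
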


\begin{proof}
The computation is very similar. Instead of giving out every detail, we will only give a sketch. 

\begin{enumerate}
    \item For any $h\in \h$, let 
    $$F(h(-1)\one) = \sum_{\alpha\in L_0, \langle\gamma+\alpha, \gamma+\alpha\rangle = 2} x_\alpha(h) \iota(e_{\gamma+\alpha}). $$
    for some $x_\alpha\in \h^*$. 
    From $F(h_1(0)h(-1)\one)=h_1(0)F(h(-1)\one) + F(h_1(-1)\one)_0 h(-1)$, we similarly see that for every $\alpha\in L$ with $\langle\gamma+\alpha, \gamma+\alpha\rangle = 2$, 
    $$x_\alpha(h)\langle h_1, \alpha+\gamma\rangle = x_\alpha(h_1)\langle h,  \gamma+\alpha\rangle. $$
    Picking $t_\alpha=x_{\alpha}(h_1)/\langle h_1, \gamma+\alpha\rangle$ for some $h_1\in \h$ that makes the denominator nonzero for every $\alpha$ involved, and let 
    $$F_1(v) = F(v) - \sum_{\alpha\in L_0, \langle \gamma+\alpha, \gamma+\alpha\rangle = 2} t_\alpha \iota(e_{\gamma+\alpha})_0 v.$$
    Then $F_1$ is also a derivation, and $F_1(h(-1)\one) = 0$ for any $h\in \h$. 
    \item For any $i = 1, ..., r$, let 
    \begin{align*}
        F_1(\iota(e_{\alpha_i})) &= \sum_{\beta\in L_0,  \langle \gamma+\beta, \gamma+\beta\rangle = \langle \alpha_i, \alpha_i\rangle} y_\beta \iota(e_{\gamma+\beta}) \\
        & \quad + \sum_{\substack{\beta\in L_0,  h_1, ..., h_n\in \h, m_1, ..., m_n\in \Z_+,\\ m_1+\cdots + m_n + \frac 1 2 \langle \gamma+\beta, \gamma+\beta\rangle = \frac 1 2 \langle \alpha_i, \alpha_i\rangle}} h_1(-m_1)\cdots h_n(-m_n)\iota(e_{\gamma+\beta}). 
    \end{align*}
    From $F(h(0)\iota(e_{\alpha_i})) = h(0)F(\iota(e_{\alpha_i})$ (note that $F(h(-1)\one)= 0$) for any $h\in \h$, we see that 
    \begin{align*}
        &\langle h, \alpha_i\rangle \sum_{\beta\in L_0,  \langle \gamma+\beta, \gamma+\beta\rangle = \langle \alpha_i, \alpha_i\rangle} y_\beta \iota(e_{\gamma+\beta}) \\
        & + \langle h, \alpha_i\rangle \sum_{\substack{\beta\in L_0,  h_1, ..., h_n\in \h, m_1, ..., m_n\in \Z_+,\\ m_1+\cdots + m_n + \frac 1 2 \langle \gamma+\beta, \gamma+\beta\rangle = \frac 1 2 \langle \alpha_i, \alpha_i\rangle}} h_1(-m_1)\cdots h_n(-m_n)\iota(e_{\gamma+\beta})\\
        =&  \sum_{\beta\in L_0,  \langle \gamma+\beta, \gamma+\beta\rangle = \langle \alpha_i, \alpha_i\rangle} y_\beta \langle h, \gamma+\beta\rangle \iota(e_{\gamma+\beta}) \\
        & + \sum_{\substack{\beta\in L_0,  h_1, ..., h_n\in \h, m_1, ..., m_n\in \Z_+,\\ m_1+\cdots + m_n + \frac 1 2 \langle \gamma+\beta, \gamma+\beta\rangle = \frac 1 2 \langle \alpha_i, \alpha_i\rangle}}\langle h, \gamma+\beta\rangle  h_1(-m_1)\cdots h_n(-m_n)\iota(e_{\gamma+\beta}). 
    \end{align*}
\end{enumerate}
Note that if $\langle h, \alpha_i\rangle = \langle h, \gamma+\beta\rangle$ for every $h\in h$, then $\alpha_i = \gamma+\beta$, which is impossible since $\gamma\notin L_0$ while $\alpha_i, \beta\in L_0$. Thus to make the equality, it is necessary that $y_\beta=0$ and all the $h_1(-m_1)\cdots h_n(-m_n)\iota(e_{\gamma+\beta}) = 0$. So $F_1(\iota(e_{\alpha_i}))=0$ for every $i = 1, ..., r$. Thus 
$$F(v) = \sum_{\alpha\in L_0, \langle \gamma+\alpha, \gamma+\alpha\rangle = 2} t_\alpha \iota(e_{\gamma+\alpha})_0 v$$
is a zero-mode derivation. 
\end{proof}

\begin{rema}
For the lattice VOA case, the bimodules for Zhu's algebra does not help at all (cf. Remark \ref{Rmk-1-23}). Indeed, it follows from the 
$$V_{\mu+L_0} \boxtimes V_{\nu+L_0} = V_{\mu+\nu+L_0}$$
among modules assoicated with $\mu, \nu\in L$ that $A_0(V_{\mu+L_0}) = 0$ for any $\mu\notin L_0$. 
\end{rema}

\subsection{The module $V_{\gamma+L_0}$ with $\N$-grading} In this subsection we consider $W=V_{\gamma+\alpha}$ with shifted grading $\d_W = L(0) - \langle \gamma, \gamma\rangle / 2$. In this case, $H^1(V, W)$ consists of derivations satisfying 
$$F(L(0)v) = \left(L(0) - \frac{\langle \gamma, \gamma\rangle }{2} \right)F(v). $$
Similarly, with this grading, the choice of derivations is different. Thus, $H^1(V, W)$ and $Z^1(V, W)$ are different from those with $L(0)$-gradings. 

\begin{thm}\label{lattice-canonical-N-grading}
Let $V=V_{L_0}$ with $L_0$ a positive definite even lattice. Let $W = V_{\gamma+L_0}$ for some $\gamma\notin L_0$ with the grading operator $\d_W = L(0) - \langle \gamma, \gamma\rangle / 2$. Then $H^1(V, W) = Z^1(V, W)$. 
\end{thm}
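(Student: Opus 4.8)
The plan is to mirror the structure of the preceding $L(0)$-graded theorem, but to account for the fact that the shifted grading $\d_W = L(0) - \langle\gamma,\gamma\rangle/2$ enlarges the relevant low weight spaces. First I would note that we may take $\gamma$ to be a vector of minimal norm in its coset $\gamma + L_0$, so that $\d_W$ is genuinely $\N$-valued; then $W_{[0]}$ is the span of the minimal-norm coset vectors $\iota(e_{\gamma+\beta})$ with $\langle\gamma+\beta,\gamma+\beta\rangle = \langle\gamma,\gamma\rangle$ (no Heisenberg modes can occur at weight $0$), while $W_{[1]}$ splits as the span of the pure vectors $\iota(e_{\gamma+\alpha})$ with $\langle\gamma+\alpha,\gamma+\alpha\rangle = \langle\gamma,\gamma\rangle + 2$ together with the Heisenberg-dressed vectors $k(-1)\iota(e_{\gamma+\beta})$ over minimal $\beta$ and $k \in \h$. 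As before, by the generation lemma above and Proposition \ref{Der-gen} it suffices to control $F$ on the generators $h(-1)\one$ ($h\in\h$) and $\iota(e_{\alpha_i})$, and to exhibit a weight-one $w \in W_{[1]}$ whose zero-mode derivation agrees with $F$ on them.

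The heart is the analysis of $F$ on $h(-1)\one$. Using $h_1(0)h(-1)\one = 0$ and the derivation identity I obtain $h_1(0)F(h(-1)\one) = h(0)F(h_1(-1)\one) - L(-1)h(1)F(h_1(-1)\one)$; since $L(-1)W_{[0]}$ is $\h$-stable and $\ker L(-1)=0$, passing to the quotient $\bar W_{[1]} := W_{[1]}/L(-1)W_{[0]}$ yields $h_1(0)\bar F(h) = h(0)\bar F(h_1)$ for the induced map $\bar F\colon \h \to \bar W_{[1]}$. Decomposing $\bar W_{[1]}$ into its $\h$-weight spaces, all of whose characters have the form $\langle\cdot,\mu\rangle$ with $\mu\neq 0$, this symmetry forces each weight-$\chi$ component of $\bar F$ to be of the form $h \mapsto \chi(h)\,v_\chi$ for a fixed vector $v_\chi$. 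On the other hand, for $w\in W_{[1]}$ the mode formula truncates (as $W_{[m]}=0$ for $m<0$) to $w_0(h(-1)\one) = -h(0)w + L(-1)h(1)w \equiv -h(0)w \pmod{L(-1)W_{[0]}}$, so choosing $w$ to be a weight-by-weight lift of $-\sum_\chi v_\chi$ produces $F_1 := F - w_0(\cdot)$ with $F_1(h(-1)\one) \in L(-1)W_{[0]}$ for every $h$.

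To finish I would kill the residual $L(-1)W_{[0]}$-part and then the images of the $\iota(e_{\alpha_i})$. Writing $F_1(h(-1)\one) = L(-1)\psi(h)$ with $\psi(h)\in W_{[0]}$, the same identity (now with $F_1(h_1(-1)\one)_0 = 0$, since zero-modes of $L(-1)$-images vanish) gives $L(-1)h_1(0)\psi(h) = 0$, hence $h_1(0)\psi(h)=0$ for all $h_1$; as every $\iota(e_{\gamma+\beta})\in W_{[0]}$ carries the nonzero character $\langle\cdot,\gamma+\beta\rangle$, this forces $\psi=0$ and so $F_1(h(-1)\one)=0$. Then the relation $\langle h,\alpha_i\rangle F_1(\iota(e_{\alpha_i})) = h(0)F_1(\iota(e_{\alpha_i}))$ (the zero-mode term drops out) compares the character $\langle\cdot,\alpha_i\rangle$ of the generator with the characters $\langle\cdot,\gamma+\beta\rangle$ occurring in $F_1(\iota(e_{\alpha_i}))$; since $\alpha_i\in L_0$ while every $\gamma+\beta\notin L_0$, no character matches and $F_1(\iota(e_{\alpha_i}))=0$. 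Thus $F_1$ vanishes on all generators, whence $F_1 = 0$ by Proposition \ref{Der-gen} and $F = w_0(\cdot)\in Z^1(V,W)$.

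I expect the main obstacle to be the middle step: correctly pinning down the $\h$-module structure of $W_{[1]}$ — in particular accounting for the Heisenberg-dressed vectors $k(-1)\iota(e_{\gamma+\beta})$ which are absent in the $L(0)$-graded situation — and verifying that the symmetry relation together with the lift of $\bar W_{[1]}$ genuinely yields a single weight-one $w$ realizing $F$ modulo $L(-1)W_{[0]}$. The saving grace, which makes this cleaner than the affine canonical-grading argument in Theorem \ref{affine-canonical-N-grading}, is that $\h$ acts through the nonzero characters $\langle\cdot,\mu\rangle$, so no Schur/Casimir subtlety intervenes and the potentially troublesome $L(-1)W_{[0]}$-part is eliminated automatically.
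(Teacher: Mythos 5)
Your proposal is correct, and its overall skeleton matches the paper's: analyze $F$ on the generators $h(-1)\one$ and $\iota(e_{\alpha_i})$, subtract a zero-mode derivation so that the corrected derivation $F_1$ kills every $h(-1)\one$, use the character comparison (possible exactly because $\alpha_i\in L_0$ while all $\h$-weights of $W$ lie in $\gamma+L_0\not\ni\alpha_i$) to get $F_1(\iota(e_{\alpha_i}))=0$, and conclude by Proposition \ref{Der-gen}. The genuine difference is how you accomplish the middle step. The paper works in explicit coordinates: it expands $F(h(-1)\one)$ in the basis of pure vectors $\iota(e_{\gamma+\alpha})$ and dressed vectors $\alpha_i(-1)\iota(e_{\gamma+\beta})$, extracts from the derivation identity a linear relation on the coefficient functionals $x_\alpha, y_{i,\beta}\in\h^*$ (the dressed part requires rewriting $\gamma+\beta$ in the dual basis $\alpha_j^\vee$, and the resulting formula for $y_{i,\beta}$ carries an extra correction term), solves for scalars $t_\alpha, u_{i,\beta}$, and then verifies by direct computation that subtracting the corresponding zero modes annihilates $h(-1)\one$ exactly. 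You instead pass to the quotient $W_{[1]}/L(-1)W_{[0]}$ --- precisely the device the paper itself uses in the affine case, Theorem \ref{affine-canonical-N-grading} --- where the relation becomes the clean symmetry $h_1(0)\bar F(h)=h(0)\bar F(h_1)$, produce a single $w\in W_{[1]}$ matching $F$ only modulo $L(-1)W_{[0]}$, and then show the residual part $L(-1)\psi(h)$ vanishes for free: zero modes of $L(-1)$-images are zero, so the identity collapses to $L(-1)h_1(0)\psi(h)=0$, and $\ker L(-1)=0$ (Li's theorem, since $\gamma\notin L_0$ implies $W\not\simeq V$) together with the nonzero characters $\langle\cdot,\gamma+\beta\rangle$ on $W_{[0]}$ forces $\psi=0$. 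Your route buys a conceptual simplification --- no linear system for the dressed coefficients, and a structural explanation of why, unlike the affine case, no Casimir-type correction is needed (the abelian algebra $\h$ acts on $W_{[0]}$ with nowhere-vanishing characters, so the $L(-1)W_{[0]}$-ambiguity resolves itself) --- at the cost of producing the weight-one element $w$ only as a weight-by-weight lift rather than with the explicit coefficients $t_\alpha$, $u_{i,\beta}$ that the paper's computation exhibits.
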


\begin{proof}
It is clear that $\iota(e_\gamma)$ is a lowest weight vector of $W$ that generates $V_{\gamma+L_0}$. For $h\in \h$, let 
\begin{align}
    F(h(-1)\one) = \sum_{\substack{\alpha\in L_0,\\ \langle \gamma+\alpha, \gamma+\alpha\rangle = \langle \gamma, \gamma\rangle + 2}} x_\alpha(h) \iota(e_{\gamma+\alpha}) + \sum_{\substack{\beta\in L_0, \\ \langle \gamma+\beta, \gamma+\beta\rangle = \langle \gamma, \gamma\rangle}} \sum_{i=1}^r y_{i, \beta}(h)\alpha_i(-1)\iota(e_{\gamma+\beta}), \label{lattice-N-grading-1}
\end{align}
where $x_\alpha, y_{i,\beta}\in \h^*$ for each choice of $\alpha, i, \beta$. 
From 
$$F(h_1(0)h(-1)\one) = h_1(0)F(h(-1)\one) + F(h_1(-1)\one)_0h(-1)\one, $$
we see that
\begin{align}
    0 &=  \sum_{\substack{\alpha\in L_0,\\ \langle \gamma+\alpha, \gamma+\alpha\rangle = \langle \gamma, \gamma\rangle + 2}} \langle h_1, \gamma+\alpha\rangle x_\alpha(h) \iota(e_{\gamma+\alpha}) + \sum_{\substack{\beta\in L_0, \\ \langle \gamma+\beta, \gamma+\beta\rangle = \langle \gamma, \gamma\rangle}} \sum_{i=1}^r \langle h_1, \gamma+\beta\rangle  y_{i, \beta}(h)\alpha_i(-1)\iota(e_{\gamma+\beta})\label{5-6-line-1}\\
    & \quad - \sum_{\substack{\alpha\in L_0,\\ \langle \gamma+\alpha, \gamma+\alpha\rangle = \langle \gamma, \gamma\rangle + 2}} \langle h, \gamma+\alpha\rangle x_\alpha(h_1) \iota(e_{\gamma+\alpha}) - \sum_{\substack{\beta\in L_0, \\ \langle \gamma+\beta, \gamma+\beta\rangle = \langle \gamma, \gamma\rangle}} \sum_{i=1}^r \langle h, \gamma+\beta\rangle y_{i, \beta}(h_1)\alpha_i(-1)\iota(e_{\gamma+\beta})\label{5-6-line-2}\\
    & \quad + \sum_{\substack{\beta\in L_0, \\ \langle \gamma+\beta, \gamma+\beta\rangle = \langle \gamma, \gamma\rangle}} \sum_{i=1}^r \langle h, \alpha_i\rangle y_{i, \beta}(h_1)(\gamma+\beta)(-1)\iota(e_{\gamma+\beta}) \label{5-6-line-3}
\end{align}
Comparing the coefficients of $\iota(e_{\gamma+\alpha})$ for each $\alpha\in L_0$ with $\langle \gamma+\alpha, \gamma+\alpha\rangle = \langle \alpha, \alpha\rangle + 2$, we see that 
$$\langle h_1, \gamma+\alpha\rangle x_\alpha(h) = \langle h, \gamma+\alpha\rangle x_\alpha(h_1). $$
We similarly set $t_\alpha = x_\alpha(h)/\langle h, \gamma+\alpha\rangle$. 

To compare the coefficients for $\alpha_i(-1)\iota(e_{\gamma+\beta})$ for $i=1, ..., r, \beta\in L_0, \langle \gamma+\beta, \gamma+\beta\rangle = \langle \gamma, \gamma\rangle$, we first rewrite 
$$\gamma+\beta= \sum_{j=1}^r \langle \gamma+\beta, \alpha_i^\vee\rangle \alpha_i, $$
where $\alpha_1^\vee, ..., \alpha_r^\vee\in \h$ satisfy $\langle \alpha_i^\vee, \alpha_j\rangle = \delta_{ij}, i, j = 1,..., r$. Then rewrite the last term (\ref{5-6-line-3}) as
\begin{align*}
    & \sum_{\substack{\beta\in L_0, \\ \langle \gamma+\beta, \gamma+\beta\rangle = \langle \gamma, \gamma\rangle}} \sum_{i=1}^r \langle h, \alpha_i\rangle y_{i, \beta}(h_1)\sum_{j=1}^r \langle \gamma+\beta, \alpha_j^\vee \rangle \alpha_j(-1)\iota(e_{\gamma+\beta})\\
    & = \sum_{\substack{\beta\in L_0, \\ \langle \gamma+\beta, \gamma+\beta\rangle = \langle \gamma, \gamma\rangle}}\sum_{i=1}^r \left(\sum_{j=1}^r \langle h, \alpha_j\rangle y_{j, \beta}(h_1) \langle \gamma+\beta, \alpha_i^\vee \rangle\right) \alpha_i(-1)\iota(e_{\gamma+\beta})
\end{align*}
to read the coefficient for $\alpha_i(-1)\iota(e_{\gamma+\beta})$, which yields
$$\langle h_1, \gamma+\beta\rangle y_{i,\beta}(h) = \langle h, \gamma+\beta\rangle y_{i,\beta}(h_1) - \sum_{j=1}^r \langle h, \alpha_j\rangle \langle \gamma+\beta, \alpha_i^\vee\rangle y_{j,\beta}(h_1)$$
Pick $h_1\in \h$ such that $\langle h_1, \gamma+\beta\rangle \neq 0$ for every index $\beta$ involved in this sum. Let $u_{j, \beta} = y_{j, \beta}(h_1)/\langle h_1, \gamma+\beta\rangle$ for each $j=1, ..., r, \beta\in L_0, \langle \gamma+\beta, \gamma+\beta\rangle = \langle \gamma, \gamma\rangle$. Then we have
$$y_{i, \beta}(h) = \langle h, \gamma+\beta\rangle u_{i, \beta} - \sum_{j=1}^r \langle h, \alpha_j\rangle \langle \gamma+\beta,\alpha_i^\vee\rangle u_{j, \beta}. $$
With the choice of $t_\alpha$ and $u_{j, \beta}$, we now consider 
$$F_1(v) = F(v) + \sum_{\substack{\alpha\in L_0 \\ \langle \gamma+\alpha, \gamma+\alpha\rangle = \langle \alpha, \alpha\rangle + 2}} t_\alpha \iota(e_{\gamma+\alpha})_0 v + \sum_{\substack{\beta\in L_0 \\ \langle \gamma+\beta, \gamma+\beta\rangle = \langle \gamma, \gamma\rangle}}\sum_{i=1}^r u_{i, \beta}\alpha_i(-1)\iota(e_{\gamma+\beta})_0 v. $$
Since we modified $F$ with zero-mode derivations, it is clear that $F_1$ is also a derivation. The following computation shows that $F_1(h(-1)\one) = 0$ for every $h\in \h$. Indeed, 
\begin{align*}
     &\quad F_1(h(-1)\one)\\
     &= F(h(-1)\one) - \sum_{\substack{\alpha\in L_0 \\ \langle \gamma+\alpha, \gamma+\alpha\rangle = \langle \alpha, \alpha\rangle + 2}} t_\alpha \langle h, \gamma+\alpha\rangle \iota(e_{\gamma+\alpha}) \\
    & \quad - \sum_{\substack{\beta\in L_0 \\ \langle \gamma+\beta, \gamma+\beta\rangle = \langle \gamma, \gamma\rangle}}\sum_{i=1}^r u_{i, \beta}\left(\langle h, \gamma+\beta\rangle \alpha_i(-1)\iota(e_{\gamma+\beta})- \sum_{j=1}^r \langle h, \alpha_i\rangle \langle \gamma+\beta, \alpha_j^\vee\rangle \alpha_j(-1)\iota(e_{\gamma+\beta})\right)\\
    &= F(h(-1)\one) - \sum_{\substack{\alpha\in L_0 \\ \langle \gamma+\alpha, \gamma+\alpha\rangle = \langle \alpha, \alpha\rangle + 2}} x_\alpha(h) \iota(e_{\gamma+\alpha}) \\
    & \quad - \sum_{\substack{\beta\in L_0 \\ \langle \gamma+\beta, \gamma+\beta\rangle = \langle \gamma, \gamma\rangle}} \left(\sum_{i=1}^r u_{i, \beta}\langle h, \gamma+\beta\rangle \alpha_i(-1)\iota(e_{\gamma+\beta})- \sum_{i=1}^r\sum_{j=1}^r \langle h, \alpha_j\rangle \langle \gamma+\beta, \alpha_i^\vee\rangle \alpha_i(-1)\iota(e_{\gamma+\beta}) \right)\\
    &= F(h(-1)\one) - \sum_{\substack{\alpha\in L_0 \\ \langle \gamma+\alpha, \gamma+\alpha\rangle = \langle \alpha, \alpha\rangle + 2}} x_\alpha(h) \iota(e_{\gamma+\alpha}) - \sum_{\substack{\beta\in L_0 \\ \langle \gamma+\beta, \gamma+\beta\rangle = \langle \gamma, \gamma\rangle}}\sum_{i=1}^r y_{i, \beta}(h) \alpha_i\iota(e_{\gamma+\beta}) \\
    &=0
\end{align*}
Let 
\begin{align*}
    & F_1(\iota(e_{\alpha_i})) \\
    & = \sum_{\substack{\alpha\in L_0, \\ \langle \gamma+\alpha,\gamma+\alpha\rangle \\
    = \langle \alpha_i, \alpha_i\rangle+\langle \gamma, \gamma\rangle }} z_\alpha \iota(e_{\gamma+\alpha}) \\
    & +   \sum_{\substack{\beta\in L_0, s_1, ..., s_r \in \N,\\ k_1^{(1)}, ..., k_{s_1}^{(1)}, ..., k_1^{(r)}, ..., k_{s_r}^{(r)}\in \N\\ \sum_{p=1}^r \sum_{j=1}^{s_i} j\cdot k_j^{(p)} +  \frac 1 2 \langle \gamma+\beta, \gamma+\beta\rangle \\
    = \frac 1 2 \langle \alpha_i, \alpha_i\rangle  + \frac 1 2 \langle \gamma, \gamma\rangle }}& & u_{k_1^{(1)}, ..., k_{s_1}^{(1)}, ..., k_1^{(r)}, ...,  k_{s_r}^{(r)}, \beta} \\
    & &\cdot & \alpha_1(-1)^{k_1^{(1)}}\cdots \alpha_1(-s_1)^{k_{s_1}^{(1)}}\cdots \alpha_r(-1)^{k_1^{(r)}}\cdots \alpha_r(-s_r)^{k_{s_r}^{(r)}}\iota(e_{\gamma+\beta}). 
\end{align*}
Since $F_1(h(-1)\one) = 0$, it is clear that 
$$F(h(0) \iota(e_{\alpha_i})) = h(0)F(\iota(e_{\alpha_i})), $$
i.e., 
\begin{align*}
    & \quad \sum_{\substack{\alpha\in L_0, \\ \langle \gamma+\alpha,\gamma+\alpha\rangle \\
    = \langle \alpha_i, \alpha_i\rangle+\langle \gamma, \gamma\rangle }} \langle h, \alpha_i\rangle z_\alpha \iota(e_{\gamma+\alpha}) \\
    & +   \sum_{\substack{\beta\in L_0, s_1, ..., s_r \in \N,\\ k_1^{(1)}, ..., k_{s_1}^{(1)}, ..., k_1^{(r)}, ..., k_{s_r}^{(r)}\in \N\\ \sum_{p=1}^r \sum_{j=1}^{s_i} j\cdot k_j^{(p)} +  \frac 1 2 \langle \gamma+\beta, \gamma+\beta\rangle \\
    = \frac 1 2 \langle \alpha_i, \alpha_i\rangle  + \frac 1 2 \langle \gamma, \gamma\rangle }}& & \langle h, \alpha_i\rangle u_{k_1^{(1)}, ..., k_{s_1}^{(1)}, ..., k_1^{(r)}, ...,  k_{s_r}^{(r)}, \beta} \\
    & &\cdot & \alpha_1(-1)^{k_1^{(1)}}\cdots \alpha_1(-s_1)^{k_{s_1}^{(1)}}\cdots \alpha_r(-1)^{k_1^{(r)}}\cdots \alpha_r(-s_r)^{k_{s_r}^{(r)}}\iota(e_{\gamma+\beta}). \\
    & =  \sum_{\substack{\alpha\in L_0, \\ \langle \gamma+\alpha,\gamma+\alpha\rangle \\
    = \langle \alpha_i, \alpha_i\rangle+\langle \gamma, \gamma\rangle }} \langle h, \gamma+\alpha\rangle z_\alpha \iota(e_{\gamma+\alpha}) \\
    &    \sum_{\substack{\beta\in L_0, s_1, ..., s_r \in \N,\\ k_1^{(1)}, ..., k_{s_1}^{(1)}, ..., k_1^{(r)}, ..., k_{s_r}^{(r)}\in \N\\ \sum_{p=1}^r \sum_{j=1}^{s_i} j\cdot k_j^{(p)} +  \frac 1 2 \langle \gamma+\beta, \gamma+\beta\rangle \\
    = \frac 1 2 \langle \alpha_i, \alpha_i\rangle  + \frac 1 2 \langle \gamma, \gamma\rangle }}& & \langle h, \gamma+\beta\rangle u_{k_1^{(1)}, ..., k_{s_1}^{(1)}, ..., k_1^{(r)}, ...,  k_{s_r}^{(r)}, \beta} \\
    & &\cdot & \alpha_1(-1)^{k_1^{(1)}}\cdots \alpha_1(-s_1)^{k_{s_1}^{(1)}}\cdots \alpha_r(-1)^{k_1^{(r)}}\cdots \alpha_r(-s_r)^{k_{s_r}^{(r)}}\iota(e_{\gamma+\beta}). 
\end{align*}
Thus for every choice of indices $\alpha, \beta$ involved in the sum, 
\begin{align}
    \langle h, \alpha_i \rangle z_\alpha& = \langle h, \gamma+\alpha\rangle z_\alpha, \label{lattice-N-grading-2}\\
    \langle h, \alpha_i \rangle u_{k_1^{(1)}, ..., k_{s_1}^{(1)}, ..., k_1^{(r)}, ...,  k_{s_r}^{(r)}, \beta}&= \langle h, \gamma+\beta\rangle u_{k_1^{(1)}, ..., k_{s_1}^{(1)}, ..., k_1^{(r)}, ...,  k_{s_r}^{(r)}, \beta} \label{lattice-N-grading-3}
\end{align}
holds for arbitrary $h\in \h$. Since $\gamma\notin L_0$, this is possible only when every $z_\alpha = 0$ and every $u_{k_1^{(1)}, ..., k_{s_1}^{(1)}, ..., k_1^{(r)}, ...,  k_{s_r}^{(r)}, \beta} = 0$. Thus 
$$F_1(\iota(e_{\alpha_i})) = 0, i = 1, ..., r.$$
That is to say, $F_1(v) = 0$. Thus 
$$F(v) = - \sum_{\substack{\alpha\in L_0 \\ \langle \gamma+\alpha, \gamma+\alpha\rangle = \langle \alpha, \alpha\rangle + 2}} t_\alpha \iota(e_{\gamma+\alpha})_0 v - \sum_{\substack{\beta\in L_0 \\ \langle \gamma+\beta, \gamma+\beta\rangle = \langle \gamma, \gamma\rangle}}\sum_{i=1}^r u_{i, \beta}\alpha_i(-1)\iota(e_{\gamma+\beta})_0 v$$
is a zero-mode derivation. 
\end{proof}

\begin{thm}
Let $V=V_{L_0}$ where $L_0$ a positive definite even lattice. Let $W = V_{\gamma+L_0}$ for some $\gamma\notin L_0$ with arbitrary $\N$-grading. Then $H^1(V, W) = Z^1(V, W)$. 
\end{thm}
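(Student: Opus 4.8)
The plan is to parametrize an arbitrary $\N$-grading on $W = V_{\gamma+L_0}$ by its lowest weight $\Xi\in\N$. Writing $h_0$ for the lowest $L(0)$-weight of $W$ (we may take $\gamma$ minimal in its coset, so $h_0 = \langle\gamma,\gamma\rangle/2$), every $\N$-grading has the form $\d_W = L(0) - h_0 + \Xi$, so that a derivation $F$ shifts $L(0)$-weight by $h_0 - \Xi$. In particular $F(h(-1)\one)$ lands in $L(0)$-weight $1 + h_0 - \Xi$ and $F(\iota(e_{\alpha_i}))$ in $L(0)$-weight $\langle\alpha_i,\alpha_i\rangle/2 + h_0 - \Xi$. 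The case $\Xi = 0$ is exactly Theorem \ref{lattice-canonical-N-grading}, so it remains to treat $\Xi\geq 1$, and throughout I would reduce to the behaviour of $F$ on the generators $h(-1)\one$ and $\iota(e_{\alpha_i})$ via Proposition \ref{Der-gen}.

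First I would dispose of $\Xi\geq 2$. Here $F(h(-1)\one)$ would sit in $L(0)$-weight strictly below $h_0$, so $F(h(-1)\one)=0$ for every $h\in\h$. Feeding this into the derivation identity $F(h(0)\iota(e_{\alpha_i})) = h(0)F(\iota(e_{\alpha_i})) + F(h(-1)\one)_0\iota(e_{\alpha_i})$ and using $h(0)\iota(e_{\alpha_i}) = \langle h,\alpha_i\rangle\iota(e_{\alpha_i})$ shows that $F(\iota(e_{\alpha_i}))$ is a simultaneous $h(0)$-eigenvector with eigenvalue $\langle h,\alpha_i\rangle$ for all $h$. Since $h(0)$ acts on each monomial $(\text{oscillators})\,\iota(e_{\gamma+\beta})$ by the scalar $\langle h,\gamma+\beta\rangle$, every $\beta$ occurring in $F(\iota(e_{\alpha_i}))$ must satisfy $\gamma+\beta=\alpha_i$, which is impossible because $\gamma\notin L_0$ while $\alpha_i,\beta\in L_0$. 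Hence $F$ kills all generators and $F=0$; note $Z^1(V,W)=0$ here too, since a $\d_W$-weight-one vector would lie in $L(0)$-weight $1 + h_0 - \Xi < h_0$, so the equality $H^1(V,W)=Z^1(V,W)$ holds trivially.

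The case $\Xi = 1$ is the only one requiring genuine work and is the main obstacle. Now $F(h(-1)\one)$ lands in the lowest $L(0)$-weight space, so it is a combination $\sum_\beta x_\beta(h)\iota(e_{\gamma+\beta})$ of lowest-weight vectors with $x_\beta\in\h^*$, and these $\iota(e_{\gamma+\beta})$ carry $\d_W$-weight exactly $1$, so their zero modes are bona fide zero-mode derivations. The hard part is producing the right correction term: I would use $h_1(0)h(-1)\one = 0$ in $V$ together with the explicit computation $\iota(e_{\gamma+\beta})_0 h(-1)\one = -\langle h,\gamma+\beta\rangle\iota(e_{\gamma+\beta})$ to derive, exactly as in Theorems \ref{lattice-L(0)-thm} and \ref{lattice-canonical-N-grading}, the compatibility $x_\beta(h)\langle h_1,\gamma+\beta\rangle = x_\beta(h_1)\langle h,\gamma+\beta\rangle$, which forces $x_\beta(h) = \langle h,\gamma+\beta\rangle\,t_\beta$ for scalars $t_\beta$. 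Subtracting the associated zero-mode derivation yields $F_1(v) = F(v) + \sum_\beta t_\beta\,\iota(e_{\gamma+\beta})_0 v$ with $F_1(h(-1)\one)=0$, at which point the $h(0)$-eigenvector argument of the $\Xi\geq 2$ case applies verbatim to force $F_1(\iota(e_{\alpha_i}))=0$. Thus $F_1=0$ and $F$ is a zero-mode derivation, completing the proof.
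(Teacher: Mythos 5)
Your proposal is correct and follows essentially the same route as the paper: split on the lowest weight $\Xi$, kill $F(h(-1)\one)$ outright for $\Xi\geq 2$, correct by a zero-mode derivation built from the lowest-weight vectors $\iota(e_{\gamma+\beta})$ for $\Xi=1$, and then use the $h(0)$-eigenvalue comparison together with $\gamma\notin L_0$ to force $F_1(\iota(e_{\alpha_i}))=0$. You merely spell out the details that the paper compresses into ``a trivial modification'' of Theorems \ref{lattice-L(0)-thm} and \ref{lattice-canonical-N-grading}, including the worthwhile observation that $Z^1(V,W)=0$ when $\Xi\geq 2$.
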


\begin{proof}
Let $\Xi$ be the lowest weight of $W$. The case $\Xi=0$ is solved in Theorem \ref{lattice-canonical-N-grading}. For $\Xi\geq 1$, the process is a trivial modification from that in Theorem \ref{lattice-canonical-N-grading}. We shall not repeat the details but just give a sketch. 
\begin{enumerate}
    \item When $\Xi \geq 2$, $F(h(-1)\one)$ is automatically zero. We take $F_1 = F$. When $\Xi=1$, the expression of $F(h(-1)\one)$ in (\ref{lattice-N-grading-1}) involves only the first sum. Using the same arguments in Theorem \ref{lattice-L(0)-thm}, we find an element $w_{(1)}\in W_{(1)}$, so that the derivation $F_1(v) = F(v) - (w_{(1)})_0 v$ satisfies $F_1(h(-1)\one)= 0, h\in \h$. 
    \item For every basis element $\alpha_i$ of the lattice $L_0$, the condition  $F_1(h(0)\iota(e_{\alpha_i}) = h(0) \iota(e_{\alpha_i})$ implies (\ref{lattice-N-grading-2}) and (\ref{lattice-N-grading-3}) for any $h\in \h$. If $\gamma\notin L_0$, it follows similarly that $F_1(\iota(e_{\alpha_i})) = 0$. If $\gamma\in L_0$, in which case we should take $\gamma = 0$, then (\ref{lattice-N-grading-2}) and (\ref{lattice-N-grading-3}) implies that $\alpha = \alpha_i$ and $\beta= \alpha_i$. But from the grading-preserving condition of $F$, we should have $\langle \alpha, \alpha\rangle + 2\Xi = \langle \alpha_i, \alpha_i\rangle$ and $\langle \beta, \beta\rangle + \sum_{p=1}^r \sum_{j=1}^{s_i}j\cdot k_j^{(p)} + \Xi = \langle \alpha_i, \alpha_i\rangle$, neither of which is possible. So both $z_\alpha$ in (\ref{lattice-N-grading-2}) and $u_{k_1^{(1)},..., k_{s_1}^{(1)}, ..., k_1^{(r)}, ..., k_{s_r}^{(r)}, \beta}$ in (\ref{lattice-N-grading-3}) are zero. The same conclusion $F_1(\iota(e_{\alpha_i}))= 0$ holds. 
\end{enumerate}

\end{proof}

\section{Summary and Remarks}

In Sections \ref{Section-2}, \ref{Section-3} and \ref{Section-4}, we computed the first cohomology $H^1(V, W)$ for the three classes of VOAs $V$ and for any irreducible $V$-module $W$ with arbitrary $\N$-grading. Since every $\N$-graded $V$-module is a direct sum of irreducible submodules, we conclude the following theorem: 

\begin{thm}
Let $V$ be a VOA and $W$ be a $V$-module, then $H^1(V, W) = Z^1(V, W)$ if 
\begin{enumerate}
    \item $V = L_{\hat\g}(l, 0)$ where $\g$ is a simple Lie algebra, $l\in \Z_+$, $W$ is any $\N$-graded $V$-module;
    \item $V = L(c, 0)$ where $c = c_{pq}$ as in (\ref{Formula-5}), $W$ is any $\N$-graded $V$-module;
    \item $V=V_{L_0}$ where $L_0$ is a positive definite even lattice, $W$ is any $\N$-graded $V$-module.
\end{enumerate}
\end{thm}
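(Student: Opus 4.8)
The plan is to deduce the theorem from the irreducible case, which has already been treated family-by-family, by invoking complete reducibility together with the additivity of $H^1$ and $Z^1$ under direct sums. No new computation is needed; all the genuine content sits in the theorems of Sections \ref{Section-2}, \ref{Section-3} and \ref{Section-4} that handle an irreducible module equipped with an arbitrary $\N$-grading.

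First I would fix one of the three choices of $V$ and recall from the cited structure theory that every weak $V$-module, hence any $\N$-graded $W$, is a direct sum $W = \coprod_{i} W_i$ of irreducible $V$-submodules. I would then note that this decomposition is compatible with the grading: by definition the grading operator $\d_W$ restricts to $L(0)_s + \alpha_i$ on each irreducible summand $W_i$, and since $L(0)$ (hence $L(0)_s$) preserves every submodule, $\d_W$ preserves each $W_i$, so that each $W_i$ is itself an $\N$-graded irreducible $V$-module. The relevant ``arbitrary $\N$-grading'' theorem of the appropriate section then gives $H^1(V, W_i) = Z^1(V, W_i)$ for every $i$.

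It then remains to reassemble. By the additivity of $H^1$ over direct sums recorded in Section 2 one has $H^1(V, W) = \coprod_i H^1(V, W_i)$, and the identical statement for $Z^1$ holds because a homogeneous weight-one vector defining a zero-mode derivation splits along the summands $W = \coprod_i W_i$ into homogeneous weight-one vectors of the $W_i$, whose zero-modes are exactly the summand zero-mode derivations. Combining with the per-summand equalities yields $H^1(V, W) = \coprod_i Z^1(V, W_i) = Z^1(V, W)$, as desired.

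The summary theorem carries no serious obstacle; the only point requiring care, and the closest thing to one, is the passage from the two-summand additivity stated earlier to a possibly infinite direct sum, which arises when an irreducible occurs with infinite multiplicity. Here I would use that each of these algebras $V$ is finitely generated and that a derivation is determined by its values on generators (Proposition \ref{Der-gen} and Remark \ref{Rmk-2-12}): the images of the finitely many generators lie in a finite sub-sum $W' = \coprod_{i \in I_0} W_i$, and since $W'$ is a $V$-submodule the derivation identity $F(a_n v) = a_n F(v) + F(a)_n v$ forces $F(V) \subseteq W'$. Thus every derivation, and likewise every zero-mode derivation, factors through a finite direct summand, reducing the needed additivity to the already-established finite case and legitimizing the displayed identities.
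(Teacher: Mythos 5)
Your proposal is correct and takes essentially the same route as the paper: the paper likewise deduces this summary theorem from the irreducible-module theorems of Sections \ref{Section-2}, \ref{Section-3} and \ref{Section-4} (arbitrary $\N$-grading) together with complete reducibility of weak modules and the additivity of $H^1$ and $Z^1$ over direct sums. Your only addition is to justify the passage to possibly infinite direct sums via finite generation of $V$ and Proposition \ref{Der-gen}, a point the paper leaves implicit.
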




In case $V$ is the Virasoro VOA $L(c,0)$ with $c\neq c_{pq}$, $c_{pq}$ as in (\ref{Formula-5}), Proposition \ref{L(-1)w-der} shows that there exists an $\N$-graded module $W$ such that $H^1(V, W)\neq Z^1(V, W)$.

In case $V$ is the Virasoro VOA $L(c,0)$ with $c=c_{pq}$ as in (\ref{Formula-5}) and $V$ admits negative energy representations, we also proved that $H^1(V, W) = Z^1(V, W)$ for every irreducible $L(0)$-graded $V$-module whose lowest weight is greater or equal to $-3$. Since every $L(0)$-graded $V$-module is a direct sum of irreducibles, we also managed to prove that

\begin{thm}
Let $V = L(c, 0)$ be the Virasoro VOA where $c = c_{pq}$ as in (\ref{Formula-5}). Then for every $L(0)$-graded $V$-module $W$ whose lowest weight is greater or equal to $-3$, $H^1(V, W) = Z^1(V, W)$. 
\end{thm}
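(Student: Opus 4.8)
The plan is to reduce the general $L(0)$-graded module to the irreducible case already settled in Theorem \ref{Vir-neg-energy-thm}, exploiting complete reducibility of $L(c,0)$-modules. First I would invoke the fact recorded in Section \ref{Section-3} that every weak $L(c,0)$-module is a direct sum of irreducibles, so I may write $W = \coprod_i W_i$ with each $W_i \cong L(c, h_i)$ and $h_i = h_{m_i, n_i}$ for suitable $m_i, n_i$. Since $c = c_{p,q}$, there are only finitely many isomorphism classes of irreducible modules, hence only finitely many distinct lowest weights $h_i$ occur; the lowest weight of $W$ is the minimum of these, so the hypothesis that it is $\geq -3$ forces $h_i \geq -3$ for every summand.

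Next I would decompose a given grading-preserving derivation $F: V \to W$ along the summands. Because $V = L(c,0)$ is generated by $\omega$, Proposition \ref{Der-gen} shows that $F$ is completely determined by $F(\omega) \in W_{[2]}$, and $F(\omega)$ has only finitely many nonzero components under $W_{[2]} = \coprod_i (W_i)_{[2]}$. Composing $F$ with the canonical projection $\pi_i : W \to W_i$ (a $V$-module map, which therefore intertwines both $Y_W$ and the derived intertwining operator $Y_{WV}^W$) yields a derivation $F_i = \pi_i \circ F : V \to W_i$, and $F = \sum_i F_i$ with only finitely many $F_i$ nonzero. For the summands with $h_i \notin \Z$ the target $W_i$ carries no integral weight space, so $F_i = 0$, which is its own zero-mode derivation; for the summands with $h_i \in \Z$ (necessarily $\geq -3$) Theorem \ref{Vir-neg-energy-thm} produces $w_i^{(1)} \in (W_i)_{[1]}$ with $F_i(v) = (w_i^{(1)})_0 v$ for all $v \in V$.

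Finally I would reassemble: setting $w^{(1)} = \sum_i w_i^{(1)} \in W_{[1]}$, which is a genuine (finite) sum, linearity of the zero-mode assignment $w \mapsto (v \mapsto w_0 v)$ gives $F(v) = \sum_i (w_i^{(1)})_0 v = (w^{(1)})_0 v$, so $F \in Z^1(V, W)$. Combined with the trivial inclusion $Z^1(V, W) \subseteq H^1(V, W)$, this yields the claimed equality. At the level of cohomology this is exactly the additivity $H^1(V, W_1\oplus W_2) = H^1(V, W_1)\oplus H^1(V, W_2)$ recorded earlier, applied over the finitely many summands on which $F$ is supported.

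I expect the only genuinely delicate point to be the bookkeeping of this direct-sum decomposition when $W$ contains infinitely many mutually isomorphic summands: one must verify that the projected derivations $F_i$ vanish for all but finitely many $i$, and that the reconstructed vector $w^{(1)}$ lands in the honest weight space $W_{[1]}$ rather than in the algebraic completion $\overline{W}$. Both follow from the observation that $F$ is pinned down by the single element $F(\omega)$ of the uncompleted space $W_{[2]}$, so no convergence subtlety enters; everything else is a formal consequence of Theorem \ref{Vir-neg-energy-thm} and the projection argument.
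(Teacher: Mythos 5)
Your proposal is correct and follows essentially the same route as the paper, which proves the theorem in one line by combining complete reducibility of $L(0)$-graded $L(c,0)$-modules with the irreducible-case results (Theorem \ref{Vir-neg-energy-thm} and the vanishing for non-integral lowest weight) and the additivity $H^1(V, W_1\oplus W_2) = H^1(V, W_1)\oplus H^1(V, W_2)$. Your additional bookkeeping --- projecting $F$ onto each summand and noting that $F$ is pinned down by $F(\omega)\in W_{[2]}$, so only finitely many components survive and the reassembled $w^{(1)}$ lies in $W_{[1]}$ --- is a correct and welcome elaboration of what the paper leaves implicit.
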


\begin{rema}
In \cite{HQ-Coh-reduct}, we conjectured that $H^1(V, W) = Z^1(V, W)$ for any $V$-bimodule $W$. While this paper provides concrete evidence for us to believe that the conjectures hold in general, to fully prove the conjecture we need to classify all irreducible $V$-bimodules. For some examples of $V$ (affine $sl(2)$, minimal model Virasoro), an irreducible $V$-module $W$ admits a unique $V$-bimodule structure where $Y_W^R = Y_{WV}^W$. So for these examples, the classification of bimodules might not be very difficult and shall be attempted in the future. 
\end{rema}

\begin{rema}
The arguments so far did not involve any discussions of $C_2$-cofiniteness, which is key to the rigidity of the module categories. The relationship between the cofiniteness conditions and the cohomology theory should be carefully investigated in future work. 
\end{rema}

\noindent {\small \sc Pacific Institute of Mathematical Science | University Of Manitoba\\ 451 Machray Hall, 186 Dysart Road, Winnipeg, MB R3T 2N2, Canada}

\noindent {\em E-mail address}: fei.qi@umanitoba.ca

The author states that there is no conflict of interest. 

\end{document}